\crefname{defn}{Definition}{Definitions}
\crefname{thm}{Theorem}{Theorems}
\crefname{prop}{Proposition}{Propositions}
\crefname{lem}{Lemma}{Lemmas}
\crefname{cor}{Corollary}{Corollaries}
\crefname{conj}{Conjecture}{Conjectures}
\crefname{section}{Section}{Sections}
\crefname{subsection}{Subsection}{Subsections}
\crefname{eg}{Example}{Examples}
\crefname{figure}{Figure}{Figures}
\crefname{rem}{Remark}{Remarks}
\crefname{rmk}{Remark}{Remarks}
\crefname{equation}{equation}{equation}
\newtheorem*{Acknowledgements*}{Acknowledgements}
\Crefname{defn}{Definition}{Definitions}
\Crefname{thm}{Theorem}{Theorems}
\Crefname{prop}{Proposition}{Propositions}
\Crefname{lem}{Lemma}{Lemmas}
\Crefname{cor}{Corollary}{Corollaries}
\Crefname{conj}{Conjecture}{Conjectures}
\Crefname{section}{Section}{Sections}
\Crefname{subsection}{Subsection}{Subsections}
\Crefname{eg}{Example}{Examples}
\Crefname{figure}{Figure}{Figures}
\Crefname{rem}{Remark}{Remarks}
\Crefname{rmk}{Remark}{Remarks}
\newtheorem{thm}{Theorem}[section]
\newtheorem{prop}[thm]{Proposition}
\newtheorem{cor}[thm]{Corollary}
\theoremstyle{definition}
\newtheorem{defn}[thm]{Definition}
\newtheorem{eg}[thm]{Example}
\newtheorem{rmk}[thm]{Remark}
\newcommand{\Std}{\operatorname{Std}}
\newcommand{\SStd}{{\rm SStd}}
\mathchardef\mhyphen="2D
\newcommand{\ZZ}{{\mathbb Z}}
\newcommand{\Q}{{\mathbb Q}}
\newcommand{\GL}{\mathrm{GL}}
\newcommand{\Sym}{{\rm Sym}}
\newcommand{\SSTS}{\mathsf{S}}  
\newcommand{\SSTU}{\mathsf{U}}  
\newcommand{\SSTT}{\mathsf{T}}  
\newcommand{\SSTV}{\mathsf{V}}  
\newcommand{\sts}{\mathsf{s}}   
\newcommand{\stu}{\mathsf{u}}
\newcommand{\stv}{\mathsf{v}}
\newcommand{\stt}{\mathsf{t}}  
\newcommand{\Ten}{\mathbb{T}}  
\def\la{\lambda}
 \title[A generalised row removal formula for $p$-Kostka numbers]{Generalised row and column removal \\ phenomena   and $p$-Kostka numbers}
\author[C. Bowman]{Christopher Bowman}
\address{Department of Mathematics, City University London, Northampton Square, London EC1V 0HB, United Kingdom }
\email{Chris.Bowman.2@city.ac.uk}
\author[E. Giannelli]{Eugenio Giannelli}
\address{FB Mathematik, TU Kaiserslautern, Postfach 3049,
        67653 Kaisers\-lautern, Germany.}
\email{giannelli@mathematik.uni-kl.de}
\begin{document}

\begin{abstract}
We explain and generalise row and column removal phenomena for Schur algebras via isomorphisms between subquotients of these algebras.  In particular, we prove new reduction formulae for $p$-Kostka numbers and extension groups between Weyl modules and simple modules.  \end{abstract}

\maketitle

\section{Introduction}
 This paper is concerned with the study of the   representation  theory of the symmetric  and  general linear groups  over a field,  
  $\Bbbk$,  of  characteristic $p>0$.

  Given a partition $\lambda$ of $n$ into at most  $d$ non-zero parts, 
  we have    associated 
 $\GL_d$-modules:  
$ L(\lambda)$ 	 the simple module of highest weight $\lambda$;  
 $\Delta(\lambda) $	 (respectively $\nabla(\lambda) $)	 
  the  Weyl (respectively dual Weyl) module of highest weight $\lambda$; and 
$I(\lambda)$	 the injective cover of  $L(\lambda)$.  
Applying  the Schur functor to these modules, we obtain the 
simple modules 
$D(\lambda)$ (or zero); the Specht (and dual Specht) modules $S^\lambda$ (and  $S_\lambda$);
 and the Young modules $Y(\lambda)$ for the symmetric group $\mathfrak{S}_n$.  

One of the main open problems in the representation theory of general linear and symmetric groups is the following.

\smallskip

\noindent\textbf{Problem A}:
\textit{Given $\lambda$ and $\mu$   partitions of $n$, provide a combinatorial interpretation of  the decomposition numbers}
  $d_{\lambda \mu} = [\nabla(\lambda): L(\mu)].$ 
 
 \smallskip
 

  It is well-known that Problem  A     is equivalent to the following (see for instance \cite[Theorem 3.1]{JamesYoung} and \cite{ErdmannDecNumbers}).

 \smallskip

\noindent\textbf{Problem B}:
\textit{Given $\lambda$ and $\mu$   partitions of $n$, provide a combinatorial interpretation of  the multiplicities}
 $[\Sym^\lambda(\Bbbk^d): I(\mu)]=K_{\lambda \mu} = [{\rm ind} _{\mathfrak{S}_\lambda}^{\mathfrak{S}_n}(\Bbbk) :Y(\mu)]$.   
\smallskip

The multiplicities  $K_{\lambda \mu}$ 
are known as the $p$-Kostka numbers.  Young modules, and $p$-Kostka numbers in particular, 
have been extensively studied;  see for example 
\cite{ErdmannYoung1,ErdmannYoung2,ErdmannYoung3,FangHenkeKoenig,CGill,Grab, Henke,JamesYoung,Klyachko}.    
  In this article we prove a reduction formula for $p$-Kostka numbers. 
 Let $\lambda=(\lambda_1,\ldots,\lambda_d)$ and $\mu=(\mu_1,\ldots ,\mu_d)$ be partitions of $n$. 
For any fixed $1\leq r \leq d$, we  define partitions  
$$
\lambda^T = 
(\lambda_1,\lambda_2, \dots, \lambda_r),
\quad
\lambda^B = 
(\lambda_{r+1},\lambda_{r+2}, \dots, \lambda_{d}).
$$
We say that  $(\lambda, \mu)$ admits  a  horizontal  cut (after the $r$th row) if 
$|\lambda^T|=|\mu^T|$. 
Similarly, for $1\leq c \leq n$, we let
  $$\lambda^L= (\lambda'_1,\lambda_2',\ldots \lambda_c')' \quad \lambda^R= (\lambda'_{c+1},\ldots \lambda_{n}')'.$$  
 We say that $(\lambda, \mu)$ admits  a  vertical cut (after the $c$th column) if 
$|\lambda^L|=|\mu^L|$.

\begin{thm}\label{thm:main}
Let $(\lambda, \mu)$ be a pair of partitions of $n$
that 
 admits a horizontal row cut. Then 
$$K_{\lambda \mu}= K_{\lambda^T \mu^T}\cdot K_{\lambda^B \mu^B}.$$
Similarly, if $(\lambda, \mu)$ 
 admits a vertical cut, then $$K_{\lambda \mu}= K_{\lambda^L \mu^L}\cdot K_{\lambda^R \mu^R}.$$
\end{thm}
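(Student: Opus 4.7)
The plan is to reformulate the theorem in terms of tilting multiplicities for the Schur algebra $S(d,n)$: since $\Sym^\lambda(\Bbbk^d)$ is itself a tilting module and Young modules arise from tilting modules of $S(d,n)$ under the Schur functor, we have $K_{\lambda\mu} = [\Sym^\lambda(\Bbbk^d) : T(\mu)]$. The goal is then to show that this tilting multiplicity factors along the row cut, using the subquotient-isomorphism machinery developed earlier in the paper.

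For the horizontal cut after the $r$th row, set $n_1 = |\lambda^T| = |\mu^T|$ and $n_2 = n - n_1$. The set of weights $\Lambda_0 = \{\nu \vdash n : |\nu^T| = n_1\}$ is convex in the dominance poset, since $\nu' \unlhd \nu$ forces $|\nu'^T| \leq |\nu^T|$; it is the intersection of an upward-closed set $\{|\nu^T| \geq n_1\}$ with a downward-closed set $\{|\nu^T| \leq n_1\}$. Invoking the subquotient isomorphism proved earlier in the paper, there is an exact functor $\mathcal{F}$ from the Serre subquotient of $S(d,n)\text{-mod}$ indexed by $\Lambda_0$ to the module category of $S(r, n_1) \otimes S(d-r, n_2)$, sending $T(\nu) \mapsto T(\nu^T) \boxtimes T(\nu^B)$ for each $\nu \in \Lambda_0$; by construction, $\mathcal{F}$ preserves tilting multiplicities indexed by $\Lambda_0$.

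The main step is to establish
\[
\mathcal{F}(\Sym^\lambda(\Bbbk^d)) \;=\; \Sym^{\lambda^T}(\Bbbk^r) \otimes \Sym^{\lambda^B}(\Bbbk^{d-r}),
\]
which, upon extracting the multiplicity of $T(\mu^T) \boxtimes T(\mu^B)$ on both sides, gives the horizontal formula $K_{\lambda\mu} = K_{\lambda^T \mu^T} \cdot K_{\lambda^B \mu^B}$. The identification will use the tensor factorization $\Sym^\lambda(\Bbbk^d) = \Sym^{\lambda^T}(\Bbbk^d) \otimes \Sym^{\lambda^B}(\Bbbk^d)$ together with the Levi decomposition
\[
\Sym^m(\Bbbk^d)\big|_{\GL_r \times \GL_{d-r}} \;=\; \bigoplus_{a+b = m} \Sym^a(\Bbbk^r) \otimes \Sym^b(\Bbbk^{d-r}),
\]
expanded across the factors indexed by $i = 1, \ldots, d$; the aim is to argue that $\mathcal{F}$ trims this sum down to the top-bottom summand corresponding to $(a,b) = (\lambda^T, \lambda^B)$. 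The vertical cut case will follow by a parallel argument on the dual side, replacing symmetric powers by exterior powers and applying Ringel duality of the Schur algebra, which exchanges $\Sym$ and $\Lambda$ and transposes highest weights, thereby converting a horizontal cut of the conjugate into the vertical cut statement for $\lambda,\mu$.

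The main obstacle is the identification of $\mathcal{F}(\Sym^\lambda(\Bbbk^d))$: the Levi restriction contains many summands landing at the correct central weight $(n_1,n_2)$, and one must show that all but the top-bottom summand vanish under $\mathcal{F}$. Equivalently, one must verify that the extra Levi summands, viewed in the $S(d,n)$-module $\Sym^\lambda(\Bbbk^d)$, contribute only to composition factors $L(\nu)$ with $|\nu^T| > n_1$. This requires coupling the precise form of the subquotient isomorphism with a careful comparison of weight-space characters inside the subquotient category.
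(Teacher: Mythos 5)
Your opening reformulation is wrong. You claim $K_{\lambda\mu} = [\Sym^\lambda(\Bbbk^d) : T(\mu)]$ on the grounds that $\Sym^\lambda(\Bbbk^d)$ is a tilting module, but it is not: the generalised symmetric power is \emph{injective} in the polynomial category (it has a $\nabla$-filtration but generally no $\Delta$-filtration), and its indecomposable summands are the injective indecomposables $I(\mu)$, not the tilting modules $T(\mu)$. The correct multiplicity interpretation is Green's
\[
K_{\lambda\mu} \;=\; [\Sym^\lambda(\Bbbk^d) : I(\mu)],
\]
as stated in \cref{greenstuff}. Tilting multiplicities would enter only if you replaced $\Sym^\lambda$ by the exterior power $\Lambda^{\lambda'}$ via Ringel duality, but as written your starting identity is false in positive characteristic.

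Beyond this, your strategy (apply the subquotient functor, show it splits $\Sym^\lambda$ into a tensor product of smaller symmetric powers, then read off the multiplicity) is indeed close to the paper's, but you leave the decisive step unresolved. You flag the obstacle yourself: the Levi restriction $\Sym^m(\Bbbk^d)|_{\GL_r\times\GL_{d-r}}$ has many summands at the right central character, and you do not show the extraneous ones die under the subquotient functor. The paper dispatches this by an explicit basis argument rather than a character comparison: the generalised symmetric power has the Murphy-type basis $\{\rho_{\SSTS\SSTT}\mid \SSTS\in\SStd(\lambda,\nu),\SSTT\in\SStd(\lambda,\mu)\}$ of \cref{Crispysabsis}, and the subquotient functor $h_{r,c,m}$ retains precisely those basis vectors with $\lambda,\nu\in\Lambda^+_{n,d}(r,c,m)$. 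Because such tableaux split uniquely into top and bottom parts compatibly with the isomorphism $\varphi$ of \cref{4.3}, one obtains a bijection of bases, and no spurious Levi summand survives. This concrete bookkeeping is exactly the content of \cref{resul3} in Section 4.6, and it is what your proposal is missing. Finally, for the vertical case the paper avoids Ringel duality altogether and instead iterates first-column removal via tensoring with the determinant (following Fang--Henke--Koenig); your Ringel duality route is plausible in outline but is conditional both on fixing the injective/tilting confusion and on completing the factorization step.
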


Similar reduction formulas were previously given for  (graded) decomposition numbers in \cite{Chuang,James,Donkin1}  and for
  the homomorphism spaces and extension groups  between Weyl and Specht modules  in  \cite{FL,LM,Donkin} and   \cite[4.2(17)]{Donkin}. 
Our approach  allows us to 
 give a simple proof and   extend  all of the aforementioned  (unquantised) results.  
 For example,  we also obtain new results concerning extension groups between a Weyl and a simple module, as follows.

 \begin{thm}
 If 
 $\lambda,\mu$ admit a horizontal cut, then
    \[
{ \rm Ext}^k_{S^{\Bbbk}_{n,d}}(\Delta(\lambda),L(\mu))=\bigoplus_{i+j=k}{ \rm Ext}^i_{S^{\Bbbk}_{m,r}}( \Delta( { \lambda^T}), L ( { \mu^T}))
\otimes
{ \rm Ext}^j_{S^{\Bbbk}_{n-m,d-r}}( \Delta( { \lambda^B}),L( { \mu^B})).  
 \]
 Similarly  if 
 $\lambda,\mu$ admit a vertical cut, then
   \[
{ \rm Ext}^k_{S^{\Bbbk}_{n,d}}(\Delta(\lambda),L(\mu))=\bigoplus_{i+j=k}{ \rm Ext}^i_{S^{\Bbbk}_{m,r}}( \Delta( { \lambda^L}), L( { \mu^L}))
\otimes
{ \rm Ext}^j_{S^{\Bbbk}_{n-m,d-r}}( \Delta( { \lambda^R}), L( { \mu^R})).  
 \]
Here $m$ is equal to the number of nodes  above the $r$th row (respectively to the left of the $c$th column) in the partition $\lambda$  or, equivalently, the partition $\mu$.  
 \end{thm}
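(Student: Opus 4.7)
My plan is to deduce the Ext formula from a structural isomorphism between subquotients of the Schur algebra $S^\Bbbk_{n,d}$, in the spirit advertised in the introduction. I would treat the horizontal cut case first, and then obtain the vertical cut case either by a parallel argument or by invoking Ringel duality on the Schur algebra to pass between row and column removal.

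Fix a horizontal cut after row $r$ so that $|\lambda^T| = |\mu^T| = m$, and let $\Pi = \Pi_{r,m}$ denote the set of partitions $\nu$ of $n$ (with at most $d$ parts) satisfying $|\nu^T| = m$. The key combinatorial point is that $\Pi$ is convex in the dominance order: if $\mu \unlhd \nu \unlhd \lambda$ with $\lambda, \mu \in \Pi$, then the partial sum $|\nu^T| = \nu_1 + \cdots + \nu_r$ is squeezed between $|\mu^T|$ and $|\lambda^T|$, forcing $\nu \in \Pi$. Since every composition factor $L(\nu)$ of $\Delta(\lambda)$ satisfies $\nu \unlhd \lambda$, and only those with $\nu \unrhd \mu$ can contribute to $\operatorname{Ext}^*(-, L(\mu))$, the whole computation of $\operatorname{Ext}^*_{S^\Bbbk_{n,d}}(\Delta(\lambda), L(\mu))$ localises to the part of the module category labelled by $\Pi$.

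The central structural claim I would then establish is that there is an idempotent $e \in S^\Bbbk_{n,d}$, supported on weights $\omega = (\omega_1, \ldots, \omega_d)$ with $\omega_1 + \cdots + \omega_r = m$, together with an ideal $J \subseteq e S^\Bbbk_{n,d} e$, such that
\[
e S^\Bbbk_{n,d} e \,\big/\, J \;\cong\; S^\Bbbk_{m,r} \otimes_\Bbbk S^\Bbbk_{n-m,d-r},
\]
and under this isomorphism $e\Delta(\lambda) \leftrightarrow \Delta(\lambda^T) \boxtimes \Delta(\lambda^B)$ and $eL(\mu) \leftrightarrow L(\mu^T) \boxtimes L(\mu^B)$. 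Combining this with the convexity argument above, the Cline--Parshall--Scott saturation technology for quasi-hereditary algebras identifies $\operatorname{Ext}^*_{S^\Bbbk_{n,d}}(\Delta(\lambda), L(\mu))$ with the corresponding Ext over the subquotient, and the Künneth formula for Ext over a tensor product of finite-dimensional $\Bbbk$-algebras then delivers the stated direct sum decomposition.

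The main obstacle is proving the structural isomorphism itself: selecting the correct idempotent $e$ and ideal $J$, and verifying that the resulting subquotient really does decompose as a tensor product of smaller Schur algebras in a way that sends Weyl modules to exterior tensor products of Weyl modules and simple modules to exterior tensor products of simple modules. Everything else, namely the convexity check, the saturation reduction, and the final Künneth step, is then essentially formal. As a byproduct, the $k=0$ case of this isomorphism recovers the $p$-Kostka reduction of \Cref{thm:main} after applying the Schur functor, which is a useful internal consistency check on the chosen idempotent.
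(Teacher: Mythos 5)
Your overall strategy is the same as the paper's: cut out a closed subset of weights, identify the corresponding subquotient of $S^\Bbbk_{n,d}$ with a tensor product of smaller Schur-type algebras, transport $\Delta$'s and $L$'s across, and finish by K\"unneth. However, the central structural claim as you state it is false, and the missing ingredient is precisely the point the paper's construction is built around.

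The problem is with the set $\Pi=\{\nu\vdash n:|\nu^T|=m\}$ and the claimed isomorphism $eS^\Bbbk_{n,d}e/J\cong S^\Bbbk_{m,r}\otimes_\Bbbk S^\Bbbk_{n-m,d-r}$. The map $\nu\mapsto(\nu^T,\nu^B)$ from $\Pi$ to $\Lambda^+_{m,r}\times\Lambda^+_{n-m,d-r}$ is injective but far from surjective: its image is the set of pairs $(\alpha,\beta)$ with $\alpha_r\geq\beta_1$, which is not a product. So a quasi-hereditary subquotient with poset of standards $\Pi$ simply has the wrong number of simple modules to be isomorphic to $S^\Bbbk_{m,r}\otimes S^\Bbbk_{n-m,d-r}$. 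For a concrete witness, take $n=6$, $d=4$, $r=2$, $m=4$: then $\Pi=\{(3,1,1,1),(2,2,2),(2,2,1,1)\}$ has $3$ elements, while $\Lambda^+_{4,2}\times\Lambda^+_{2,2}$ has $6$. No ideal $J$ can repair this, since the label set of a quasi-hereditary quotient of $eS^\Bbbk_{n,d}e$ is determined before $J$ is chosen.

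The fix, and the paper's key idea, is to introduce the extra column parameter $c$ and work with the smaller closed set
$\Lambda^+_{n,d}(r,c,m)=\{\nu:\nu_r\geq c\geq\nu_{r+1},\ |\nu^T|=m\}$, which \emph{does} map bijectively to $\Lambda^+_{m,r}(r,c,m)\times\Lambda^+_{n-m,d-r}(0,c,0)$. Correspondingly, the targets of the isomorphism are the \emph{subquotients} $S^\Bbbk(\Lambda^+_{m,r}(r,c,m))$ and $S^\Bbbk(\Lambda^+_{n-m,d-r}(0,c,0))$ of the smaller Schur algebras, not the full algebras. One then invokes the (co)saturation machinery twice: once for $S^\Bbbk_{n,d}$, and once more to identify the Ext groups in those smaller subquotients with Ext groups over the full $S^\Bbbk_{m,r}$ and $S^\Bbbk_{n-m,d-r}$. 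The specific $c$ is chosen after fixing $\lambda$ and $\mu$: one takes $c=\lambda_r$ and uses \cref{4.0} to see that $\lambda\unrhd\mu$ forces $\mu\in\Lambda^+_{n,d}(r,\lambda_r,|\lambda^T|)$. Your convexity observation is correct and is essentially \cref{sfadjfasdhjkdsfaj} and \cref{4.0}, and the K\"unneth step at the end is sound; it is only the middle isomorphism, which you flagged as the main obstacle, that needs the extra parameter. (For the vertical cut, the paper reduces to the horizontal case by tensoring with the determinant representation rather than by Ringel duality; your suggestion could also be made to work, but it is not what is done here.)
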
   
 
 The main idea of the proof is to construct explicit isomorphisms between  subquotients of the Schur algebra; 
 on the level of the cellular bases  these  simply break apart semistandard tableaux into `top' and `bottom' parts, in the obvious fashion.

The paper is structured as follows.  
 In the first two sections we give a review of the construction of the Schur algebra and 
 tensor space.    The exposition here does not follow the chronological development of the theory, but is cherry-picked  to be as  simple and combinatorial as possible.  We follow Doty--Giaquinto \cite{DG}  
  for the definition  of the Schur algebra via generators and relations.
 We also recall J. A. Green's  construction  of the co-determinant   basis of the Schur algebra
 and  Murphy's construction of an analogous basis of tensor space.  
 In Section \ref{Sec:proofs} we prove Theorem \ref{thm:main} by constructing 
explicit isomorphisms between subquotients of the  Schur algebra  and tensor space.
  In Section \ref{Sec:Sfunct} we recall    standard facts concerning the
   Schur functor and   
hence restate   the results of 
    \cref{Sec:proofs} in the  setting of the  symmetric group.  
 
\begin{Acknowledgements*}
The  authors  are grateful for the financial support received from the Royal Commission for the Exhibition of 1851 and from the ERC Advanced Grant 291512.
The first  author  also thanks the TU Kaiserslautern  for their hospitality during the early stages of this project.
\end{Acknowledgements*}

\section{The combinatorics of   tensor space}

We let $\Lambda_{n,d}$ denote the set of  {\sf compositions}  of $n$ into at most $d$ non-zero parts.  That is, the set of  sequences, $\lambda=(\lambda_1,\lambda_2,\dots,\lambda_d)$, of non-negative integers such that the sum $|\lambda|=\lambda_1+\lambda_2+\dots +\lambda_d$ equals $n$.  
     We let $\Lambda^+_{n,d}\subseteq \Lambda_{n,d}$ denote the subset 
     consisting of the sequences $\lambda=(\lambda_1,\lambda_2,\dots,\lambda_d)$ such that $\lambda_1 \geq \lambda_2 \geq \dots \geq \lambda_d$ and refer to such sequences as {\sf partitions}. 
%
%
%
%
%
With a partition, $\lambda$, is associated its {\sf Young diagram}, which is the set of nodes
\[[\lambda]=\left\{(i,j)\in\mathbb{Z}_{>0}^2\ \left|\ j\leq \lambda_i\right.\right\}.\]
  We let $\lambda'$ denote the {\sf conjugate partition} obtained by flipping the Young diagram $[\lambda]$ through the north-west to south-easterly diagonal.  
Given $\lambda,\mu \in \Lambda^+_{n,d}$ we say that $\lambda$ dominates $\mu$, 
and write $\lambda \trianglerighteq  \mu$ if 
$$
\sum_{1\leq i \leq r} \lambda_i
\geq
\sum_{1\leq i \leq r} \mu_i
$$
for all $1\leq r \leq d$.  
There is a surjective map $\Lambda_{n,d}\to\Lambda_{n,d}^+$ 
 given by rearranging the rows of a composition to obtain a partition  in the obvious fashion (for example if $n=9$ and $d=4$, then $(5,0,1,3) \mapsto (5,3,1,0)$).  
 Under the pullback of this map we obtain the dominance 
 ordering on the set of compositions, $\Lambda_{n,d}$, and we extend the notation in the obvious fashion.

Given $\lambda \in\Lambda^+_{n,d}$ and $\mu \in \Lambda_{n,d}$, we define a   $\lambda$-tableau of weight $\mu$ to be a map $\SSTT: [\lambda] \rightarrow \{1,\ldots, d\}$ such that $\mu_i=|\{ x \in [\lambda] : \SSTT(x)=i  \}|$ for $i\geq 1$.  
If $\SSTT$ is a $\lambda$-tableau of weight $\mu$, we say that $\SSTT$ is     {\sf semistandard} if the rows are weakly increasing from left to right and the columns are strictly increasing from top to bottom. 
 We let $\SSTT^\lambda$ denote the unique  element of $\SStd(\lambda,\lambda)$.  
 
 The set of all semistandard tableaux of shape $\lambda$ and weight $\mu$ is denoted    ${\SStd}(\lambda,\mu)$ and we  let $\SStd(\lambda,-):=\cup_{\mu\in \Lambda_{n,d}} \SStd(\lambda,\mu) $.  
For $d\geq n$, we have that   $\omega=(1^n,0^{d-n})$ belongs to $\Lambda^+_{n,d}$.  We refer to the tableaux  
 of weight $\omega$ 
  as the set of  {\sf standard tableaux};  we let   $\Std(\lambda):=\SStd(\lambda, \omega)$.  
We let $\stt^\lambda$ denote the element of $\Std(\lambda)$ in which the first row contains the entries $1,2, \ldots, \lambda_1$ the second row contains entries 
$\lambda_1+1,\lambda_1+2, \ldots, \lambda_2$ etc.  

\subsection{Symmetric groups and tensor space} 
 Fix a pair $n,d$ of positive integers
and let $ \Bbbk^d$ be the $\Bbbk$-module of rank $d$, spanned by the column vectors, $v_1, \ldots, v_d$,  over
$\Bbbk$    and let 
$
  \Ten =   (\Bbbk^d)^{\otimes n},
$ denote the 
the $n$th tensor power of $ \Bbbk^d$.  The module $\Ten $ is called {\sf tensor
  space}. 
Tensor space has a natural basis given by the {\sf elementary  tensors} of the form 
  $$
  v_{i_1} \otimes   v_{i_2}
 \dots   \otimes   v_{i_n},
   $$
   for some   $(i_1,i_2, \ldots , i_n) \in \{1,\ldots ,d\}^n$.   
We let $\mathfrak{S}_{\{1,2,\ldots, n\}}$ (or simply $\mathfrak{S}_n$) denote the  
{\sf symmetric group} of permutations of the set $\{1,2,\ldots, n\}$.  
The symmetric group   $\mathfrak{S}_n$  acts naturally on the right of $\Ten$. 
This action is given by      the place permutation of the subscripts of the elementary tensors,$$
(v_{i_1)} \otimes   v_{i_2}
 \dots   \otimes   v_{i_n}) \cdot  s
 =
 v_{s^{-1}(i_1)} \otimes   v_{s^{-1}(i_2)}
 \dots   \otimes   v_{s^{-1}(i_n)}.
$$
 and extending   $\Bbbk$-linearly. 

Given $\mu \in\Lambda_{n,d}$ and 
  $w$ an elementary tensor in $\Ten$, we say that the vector $w$ has {\sf weight} $\mu$ if 
$|\{i_x \mid 1\leq x \leq n , i_x=j \}|=\mu_j$, for all
$j \in  \{1,\ldots ,d\}$.  
We define the $\mu$-{\sf weight space} to be the subspace   $\Ten_\mu$ of $\Ten$ spanned by 
the set of elementary tensors of weight $\mu$.  


It is clear that the symmetric group acts by transitively permuting the set of elementary vectors of a given weight, $\mu \in \Lambda_{n,d}$. 
In particular, the elementary tensor 
$$
\underbrace{e_1  \otimes \dots\otimes  e_1}_{\mu_1}
\;\otimes\;
\underbrace{e_2\otimes \dots\otimes  e_2}_{\mu_2}
\;\otimes\;
\dots \;\otimes\;
\underbrace{e_d\otimes \dots\otimes  e_d}_{\mu_d}
$$
is a    generator of the $\mathfrak{S}_n$-module $\Ten_\mu$    and the stabiliser 
subgroup, denoted $\mathfrak{S}_\mu$, is equal to  the subgroup 
$$ \mathfrak{S}_{\{1,2,\dots,\mu_1\}}
\times
\mathfrak{S}_{\{\mu_1+1,\mu_1+2,\dots,\mu_2\}}
\times \dots\times 
\mathfrak{S}_{\{
n-\mu_{d}+1,n-\mu_{d}+2,\dots,n\}}. 
$$  
       
 \subsection{Murphy's basis of tensor space} 
We shall now define Murphy's basis of tensor space over several steps
\begin{itemize}
\item Let  $\lambda \in \Lambda^+_{n,d}$
 and  $\mu \in \Lambda_{n,d}$.  Given 
$\SSTS\in\SStd(\lambda,\mu)$ we define the {\sf row-reading element} 
  $e_{\SSTS}\in \Ten $ by recording  the entries of $\SSTS$, as read from left to right along  
   successive 
    rows, 
   as the subscripts in the tensor power.  For example, if
  $$
\Yvcentermath1\SSTS=\young(113,22)   $$ then
  $$e_{\SSTS}=v_1\otimes v_1\otimes v_3\otimes v_2\otimes v_2 .$$
  \item 
 For $\lambda \in \Lambda^+_{n, d}$, we have a corresponding Young subgroup $\mathfrak{S}_\lambda$ of $\mathfrak{S}_n$ given by the stabiliser of   $e_{\SSTT^\lambda}$.  We let 
 $\mathcal{O}_\lambda(e_{\SSTS})$ denote  the orbit sum of   vectors conjugate to $e_{\SSTS}$ under the natural right action of $\mathfrak{S}_\lambda$.     
 \item 
For $\stt \in \Std(\lambda)$ we let $d_\stt$ denote    the  permutation on $n$ letters  such that $(\stt^\lambda)d_\stt=\stt$.  
  \item
Given  $\SSTS \in \SStd(\lambda,\mu)$ and $\stt \in \Std(\lambda)$. We define
$$\rho_{\SSTS\stt}=(\mathcal{O}_{\lambda}(e_\SSTS)) d_\stt  $$
 \end{itemize}

\begin{thm}[Murphy \cite{Murphy}]\label{murphythm}
Tensor space $\Ten = (\Bbbk^d)^{\otimes n}$ is free as a $\mathbb{Z}$-module with basis given by
$$
\{\rho_{\SSTT\stt}\mid \SSTT \in \SStd(\lambda,\mu) , \stt\in \Std(\lambda), \lambda \in \Lambda^{+}_{n,d} , \mu \in \Lambda_{n,d}\}.
$$
\end{thm}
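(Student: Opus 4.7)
The plan is to reduce the claim to a statement about each weight space of $\Ten$. Since the right action of $\mathfrak{S}_n$ preserves weight, and the orbit sum $\mathcal{O}_\lambda(e_\SSTS)$ is a sum of weight-preserving conjugates of $e_\SSTS$, each vector $\rho_{\SSTS\stt}$ with $\SSTS \in \SStd(\lambda,\mu)$ lies in $\Ten_\mu$. It therefore suffices to fix $\mu \in \Lambda_{n,d}$ and show that
$$\mathcal{B}_\mu := \{\rho_{\SSTS\stt} \mid \lambda \in \Lambda^+_{n,d},\ \SSTS \in \SStd(\lambda,\mu),\ \stt \in \Std(\lambda)\}$$
is a $\mathbb{Z}$-basis of $\Ten_\mu$. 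A cardinality check is immediate from the RSK correspondence: words of content $\mu$ in the alphabet $\{1,\ldots,d\}$ are in bijection with pairs $(P,Q)$ of common shape $\lambda$, with $P \in \SStd(\lambda,\mu)$ and $Q \in \Std(\lambda)$, so $|\mathcal{B}_\mu|$ equals $\binom{n}{\mu_1,\ldots,\mu_d}$, the rank of $\Ten_\mu$.

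My preferred route for linear independence is to identify $\Ten_\mu$ with the right permutation module $M^\mu = \mathrm{ind}_{\mathfrak{S}_\mu}^{\mathfrak{S}_n}(\mathbb{Z})$. The elementary tensor $e_\mu := v_1^{\otimes \mu_1} \otimes \cdots \otimes v_d^{\otimes \mu_d}$ generates $\Ten_\mu$ cyclically with stabiliser exactly $\mathfrak{S}_\mu$, so right multiplication on $e_\mu$ yields an isomorphism $M^\mu \cong \Ten_\mu$. Under this isomorphism, $\mathcal{B}_\mu$ should match Murphy's classical cellular basis of $M^\mu$, which is also indexed by pairs $(\SSTS,\stt)$ with $\SSTS \in \SStd(\lambda,\mu)$ and $\stt \in \Std(\lambda)$ and is known to be $\mathbb{Z}$-free; the match is verified by tracing the definitions of the orbit sum $\mathcal{O}_\lambda(e_\SSTS)$ and the coset representative $d_\stt$. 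A fully direct alternative is to expand $\rho_{\SSTS\stt}$ in the elementary-tensor basis of $\Ten_\mu$, identify its leading term via RSK, and show that the resulting transition matrix is unitriangular with respect to the dominance order on the RSK shape.

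The main obstacle in either approach is the triangularity analysis. Setting up a bijection between $\mathcal{B}_\mu$ and the elementary tensors of weight $\mu$ that is compatible with a dominance order requires a careful study of how the right action of $d_\stt$ interacts with the orbit sum $\mathcal{O}_\lambda(e_\SSTS)$. The bookkeeping is subtle because different orbit sums can have overlapping support, so one must identify the leading elementary tensor of each $\rho_{\SSTS\stt}$ and rule out cancellations at the leading stage when summing over smaller indices $(\SSTS',\stt')$. The classical fact that underpins this step, and forms the technical core of Murphy's original argument, is that RSK intertwines the dominance order on row-reading words with the dominance order on shapes; from this, unitriangularity of the transition matrix follows and linear independence is immediate.
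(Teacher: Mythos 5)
The paper does not prove this theorem; it is attributed to Murphy and cited without argument, so there is no in-paper proof to compare against. Your sketch is nevertheless a faithful roadmap to Murphy's argument: the weight-space decomposition $\Ten = \bigoplus_\mu \Ten_\mu$ is correct (each $\rho_{\SSTS\stt}$ with $\SSTS$ of weight $\mu$ lies in $\Ten_\mu$), the identification $\Ten_\mu \cong \mathrm{ind}_{\mathfrak{S}_\mu}^{\mathfrak{S}_n}(\mathbb{Z})$ via the cyclic generator $e_\mu$ is standard, and the RSK cardinality count matching $|\mathcal{B}_\mu|$ against $\dim \Ten_\mu = \binom{n}{\mu_1,\ldots,\mu_d}$ is exactly the right numerical check.

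That said, as written the proposal has a genuine incompleteness rather than an error: both routes you offer for linear independence stop short of the decisive step. In the first route you assert that $\mathcal{B}_\mu$ ``should match'' Murphy's cellular basis of $M^\mu$ and that ``the match is verified by tracing the definitions,'' but you do not trace them; one must in fact reconcile the orbit sum $\mathcal{O}_\lambda(e_\SSTS)$ (which counts each distinct vector once) with the element $x_\lambda = \sum_{w\in\mathfrak{S}_\lambda} w$ appearing in Murphy's classical construction, and this normalization discrepancy is precisely the kind of detail that needs to be pinned down before ``matching'' is meaningful over $\mathbb{Z}$. In the second route you correctly name the unitriangularity of the transition matrix to elementary tensors, ordered via RSK shape, as the technical core, but you then defer it to ``Murphy's original argument'' rather than establishing it. Since that triangularity \emph{is} the theorem in disguise, a self-contained proof cannot outsource it. To close the gap, you would need to identify the dominance-maximal elementary tensor occurring in $\rho_{\SSTS\stt}$ (the row-reading tensor of $\SSTS$ acted on by $d_\stt$), show it occurs with coefficient one, and show that all other terms either have strictly smaller RSK shape or are associated to a later pair in a suitable total order refining dominance; the absence-of-cancellation worry you raise is handled precisely because each orbit sum has exactly one term of maximal shape.
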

 
\begin{eg}Given $\lambda=(3,2)$, $\mu=(2,2,1)$ and $\SSTS$ as above, we have that
$$\rho_{\SSTS\stt^\lambda}=  v_1\otimes v_1\otimes v_3\otimes v_2\otimes v_2+v_1\otimes v_3\otimes v_1\otimes v_2\otimes v_2+v_3\otimes v_1\otimes v_1\otimes v_2\otimes v_2.$$
  \end{eg}

   \begin{eg}\label{example:1}
  Tensor space $\Ten = (\Bbbk^2)^{\otimes 4}$ is 16 dimensional.  
We have that  $\Lambda_{4,2}=\{(2,2),(3,1), (1,3), (4,0), (0,4)\}$. 
The semistandard tableaux, $\SSTS$, $\SSTT$, and  $\SSTU$ of weight $(2,2)$ are as follows 
$$
 \Yvcentermath1\young(11,22)
 \quad
 \Yvcentermath1\young(112,2)\quad
   \Yvcentermath1\young(1122)\;.	 
 $$
The standard tableaux  
  $\sts_1,\sts_2, \stt_1,\stt_2, \stt_3$ and $\stu$  are as follows
 $$
  \Yvcentermath1\young(12,34)\quad
    \Yvcentermath1\young(13,24)\quad
      \Yvcentermath1\young(123,4)\quad
            \Yvcentermath1\young(124,3)\quad
                  \Yvcentermath1\young(134,2)\quad
                        \Yvcentermath1\young(1234) \;.
 $$
The  space of vectors of  weight $(2,2)$  is 6-dimensional with basis  \begin{align*}
\rho_{\SSTS \sts_1} =& v_1\otimes v_1\otimes v_2\otimes v_2 \\
\rho_{\SSTS \sts_2}=& v_1\otimes v_2\otimes v_1\otimes v_2\\
  \rho_{\SSTT \stt_1}=& v_1\otimes v_1\otimes v_2\otimes v_2
+
v_1\otimes v_2\otimes v_1\otimes v_2
+
v_2\otimes v_1\otimes v_1\otimes v_2
\\
\rho_{\SSTT \stt_2}=& v_1\otimes v_1\otimes v_2\otimes v_2
+
v_1\otimes v_2\otimes v_2\otimes v_1
+
v_2\otimes v_1\otimes v_2\otimes v_1
\\
\rho_{\SSTT \stt_3}=& v_1\otimes v_2\otimes v_1\otimes v_2
+
v_1\otimes v_2\otimes v_2\otimes v_1
+
v_2\otimes v_2\otimes v_1\otimes v_1
\\
\rho_{\SSTU \stu}=  &v_1\otimes v_1\otimes v_2\otimes v_2
+
v_1\otimes v_2\otimes v_1\otimes v_2
+
v_1\otimes v_2\otimes v_2\otimes v_1
\\
&+ 
v_2\otimes v_1\otimes v_1\otimes v_2+
v_2\otimes v_1\otimes v_2\otimes v_1+
v_2\otimes v_2\otimes v_1\otimes v_1.
\end{align*}
\end{eg}
  
\section{The Schur algebra and the co-determinant  basis}
  Let $\Phi$ be the root system of type $A_{d-1}$: $\Phi =
\{\varepsilon_i - \varepsilon_j \mid 1 \le i \ne j \le d \}$.  Here
the $\varepsilon_i$s form the standard orthonormal basis of the euclidean
space $\mathbb{R}^d$.  Let $(\ ,\ )$ denote the inner product on this space
and define $\alpha_i = \varepsilon_i - \varepsilon_{i+1}$.  Then
$\{\alpha_1, \dots, \alpha_{d-1} \}$ is a base of simple roots and
$\Phi^+ = \{ \varepsilon_i - \varepsilon_j \mid i<j \}$ is the
corresponding set of positive roots.  We let $\alpha_i^\vee = 2\alpha_i/(\alpha_i,
\alpha_i)$ for $i=1, \dots, d$.

  The following definition of the Schur algebra over $\mathbb Q$ is due to Doty and Giaquinto \cite[Theorem 1.4]{DG} and is very much inspired by Lusztig's modified form of the quantum universal enveloping algebra.  
  
  \begin{defn}\label{gensandrelns}
{\setcounter{equation}{0}
\renewcommand{\theequation}{R\arabic{equation}}
The $\Q$-algebra $S^\Q_{n,d}$ is the associative
algebra (with $1$) given by generators $1_\lambda$ ($\lambda\in
\Lambda_{n,d}$), $e_{i,i+1}$, $f_{i,i+1}$ ($1\le i \le d-1$) subject to
the relations\begin{gather}
1_\lambda 1_\mu = \delta_{\lambda\mu} 1_\lambda, \quad
\sum_{\lambda\in \Lambda_{n,d}} 1_\lambda = 1 \\
e_{i,i+1} f_{j,j+1} - f_{j,j+1} e_{i,i+1} = \delta_{ij} \sum_{\lambda\in \Lambda_{n,d}}
(\alpha_i^\vee, \lambda)\, 1_\lambda \\
e_{i,i+1} 1_\lambda =
\begin{cases}
1_{\lambda+\alpha_i} e_{i,i+1}&
   \text{if $\lambda+\alpha_i \in \Lambda_{n,d}$}\\
0 & \text{otherwise}
\end{cases} \\
f_{i,i+1} 1_\lambda =
\begin{cases}
1_{\lambda-\alpha_i} f_{i,i+1} &
   \text{if $\lambda-\alpha_i \in \Lambda_{n,d}$}\\
0 & \text{otherwise}
\end{cases} \\
1_\lambda e_{i,i+1} =
\begin{cases}
e_{i,i+1} 1_{\lambda-\alpha_i} &
   \text{if $\lambda-\alpha_i \in \Lambda_{n,d}$}\\
0 & \text{otherwise}
\end{cases} \\
1_\lambda f_{i,i+1} =
\begin{cases}
f_{i,i+1} 1_{\lambda+\alpha_i} &
   \text{if $\lambda+\alpha_i \in \Lambda_{n,d}$}\\
0 & \text{otherwise}
\end{cases} 
%
\end{gather}
 }  
\end{defn}

\begin{rmk}
It was pointed out by Rouquier (see \cite[Introduction]{DGpre}) that the Serre relations $(R7)$ and $(R8)$ as stated in \cite[Theorem 1.4]{DG} follow from $(R1)$ to $(R6)$ and hence may be omitted. 
\end{rmk}

\begin{defn}
  For $1\leq i<j\leq d$, we inductively define elements 
 $$e_{i,j}=e_{i,j-1}e_{j-1,j} - e_{j-1,j}e_{i,j-1}
 \quad
 f_{i,j}=f_{i,j-1}f_{j-1,j} - f_{j-1,j}f_{i,j-1}.$$
We define the divided powers  
 \[
e_{i,j}^{[m]}=\frac{e_{i,j}^m}{m!}   \quad f_{i,j}^{[m]}=\frac{f_{i,j}^m}{m!}  \]
  The integral Schur
algebra $S^{\mathbb Z}_{n,d}$ is the subring of $S^{\mathbb Q}_{n,d}$ generated by all
divided powers.  \end{defn}

  \begin{prop}
We have an action of the Schur algebra $S^{\mathbb Q}_{n,d}$ on $\Ten$ defined as follows,
\begin{align*}
e_{i,i+1}(v_{j_1} \otimes \ldots \otimes v_{j_n})
&=
\sum_{\begin{subarray}c 1\leq a\leq n \\  j_a=i+1 \end{subarray}} (v_{j_1}\otimes \ldots  \otimes v_{j_a-1} \otimes  \ldots \  \otimes \ldots v_{j_n}) 
\\
f_{i,i+1}(v_{j_1} \otimes \ldots \otimes v_{j_n})
&=
\sum_{\begin{subarray}c 1\leq a\leq n \\  j_a=i \end{subarray}} (v_{j_1}\otimes \ldots  \otimes v_{j_a+1} \otimes  \ldots \  \otimes \ldots v_{j_n}) 
\\
1_\lambda  (v_{j_1}\otimes v_{j_2}\otimes \ldots v_{j_n})
&=
\begin{cases}
 (v_{j_1}\otimes v_{j_2}\otimes \ldots v_{j_n}) & \text{if the vector is of weight $\lambda$}\\
0 & \text{otherwise}
\end{cases}\end{align*}
\end{prop}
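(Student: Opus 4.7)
The plan is to verify that the three proposed formulas satisfy the defining relations (R1)--(R6) of $S^{\Q}_{n,d}$. Since the relations are linear in the generators, it suffices to check them on a basis of elementary tensors, and the weight-space decomposition $\Ten = \bigoplus_{\mu\in\Lambda_{n,d}}\Ten_\mu$ reduces most checks to bookkeeping on subscripts.

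First I would dispose of the easy relations. Relation (R1) is immediate: by construction $1_\lambda$ is the projection onto $\Ten_\lambda$, and distinct weight spaces intersect trivially and together span $\Ten$. For (R3)--(R6) I would observe that the proposed $e_{i,i+1}$ sends an elementary tensor of weight $\lambda$ either to a sum of elementary tensors of weight $\lambda+\alpha_i$ (if $\lambda_{i+1}\geq 1$, so $\lambda+\alpha_i\in\Lambda_{n,d}$) or to $0$ otherwise; the analogous statement holds for $f_{i,i+1}$ and $-\alpha_i$. Combined with the fact that $1_\mu$ is the projector onto $\Ten_\mu$, this yields (R3)--(R6) at once.

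The substantive step is relation (R2). I would evaluate the commutator on an elementary tensor $v$ of weight $\lambda$, letting $P_i=\{a : j_a=i\}$ denote the set of positions carrying the subscript $i$. When $i\neq j$ and $|i-j|\geq 2$, the operators act on disjoint sets of positions and manifestly commute. For the boundary cases $j=i\pm 1$ (say $j=i+1$), both $e_{i,i+1}$ and $f_{i+1,i+2}$ involve positions currently carrying $i+1$; expanding $e_{i,i+1}f_{i+1,i+2}(v)$ and $f_{i+1,i+2}e_{i,i+1}(v)$ produces sums indexed by ordered pairs of distinct positions in $P_{i+1}$, and these sums match term-by-term. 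For $i=j$, the computation splits into a "diagonal" contribution, where $f_{i,i+1}$ raises a subscript at position $a$ and $e_{i,i+1}$ lowers it right back (giving $|P_i|\cdot v$ for $e f$ and $|P_{i+1}|\cdot v$ for $fe$), and an "off-diagonal" contribution over ordered pairs of distinct positions, which again cancels between the two orderings. What survives is $(|P_i|-|P_{i+1}|)v = (\lambda_i-\lambda_{i+1})v = (\alpha_i^\vee,\lambda)v$, matching the right-hand side of (R2) since $1_\lambda v = v$ and $1_\mu v = 0$ for $\mu\neq\lambda$.

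The only mildly delicate step is the bookkeeping for (R2); everything else is formal. I would present the (R2) verification by fixing a single elementary tensor, writing the four-fold sums explicitly, and separating the "position-preserving" terms (whose coefficient yields $\lambda_i-\lambda_{i+1}$) from the "position-changing" terms (which cancel by reindexing). Having done this, all six relations hold, so the formulas extend uniquely to a well-defined action of $S^{\Q}_{n,d}$ on $\Ten$.
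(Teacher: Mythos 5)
Your proposal is correct and follows essentially the same route as the paper: (R1) and (R3)--(R6) are dispatched by observing how the operators interact with the weight-space decomposition, and (R2) is verified by the explicit position-bookkeeping computation that isolates the ``diagonal'' terms (yielding the coefficient $|P_i|-|P_{i+1}|=(\alpha_i^\vee,\lambda)$) from the ``off-diagonal'' terms, which cancel. The only difference is that you spell out the $i\neq j$ cases of (R2) explicitly, whereas the paper only presents the $i=j$ computation and leaves the rest tacit.
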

\begin{proof}
The relations (R1), (R3)--(R6) in Definition \ref{gensandrelns} are clear.  We now check that  (R2) holds.  
It is easy to see that 
$$
f_{i,i+1} e_{i,i+1}\left(v\right)=
\;\; \sum_{  \mathclap{\begin{subarray}c 1\leq a\leq n \\  j_a=i+1 \end{subarray}}}
\;\;
\left(v+\;\;\;\;\sum_{
\mathclap{\{b\neq a\mid  j_b=i\}}}
\;\;v_{j_1}\otimes \ldots  \otimes v_{j_a-1} \otimes  \ldots\otimes v_{j_b+1} \  \otimes \ldots v_{j_n}\right)   $$
$$e_{i,i+1} f_{i,i+1}\left(v\right)=
 \;\; \sum_{  \mathclap{ \begin{subarray}c 1\leq b\leq n \\  j_b=i \end{subarray}}}
\;\;
\left(v+  
\;\;\;\;\sum_{\mathclap{\{a\neq b\mid  j_a=i+1\}}}
\;\; v_{j_1}\otimes \ldots  \otimes v_{j_a-1} \otimes  \ldots\otimes v_{j_b+1} \  \otimes \ldots v_{j_n}\right)  
$$
for any $v=\left(v_{j_1} \otimes \ldots \otimes v_{j_n}\right)\in\Ten$.
It is now clear that $$
\left(e_{i,i+1} f_{i,i+1} - f_{i,i+1} e_{i,i+1}\right) v =  
 \left(|\{a\mid  j_a=i\}| -  |\{a\mid  j_a=i+1\}|\right)
 v,
$$as required.  
 \end{proof}

\begin{defn}\label{basiscombinatorics}  Given $1\leq i ,  j \leq d$ and   $\SSTT \in \SStd(\lambda,\mu)$,  we let $\SSTT(i,j)$ denote the number of entries equal to $j$ lying in the $i$th row  of $\SSTT$.
 Since $\SSTT$ is semistandard we have that $\SSTT(i,j)=0$ for $i>j$ and  $\sum_{1\leq i\leq d} \SSTT(i,j)=\mu_j$.  
\end{defn}

\begin{defn}\label{basiselements}Given $\SSTS, \SSTT \in \SStd(\lambda,\mu)$ we let 
 $$ 
 \xi_{\SSTS\lambda} = \prod^{1}_ { i = d } \left(\prod_{j=1}^{d} f_{i,j}^{[\SSTS(i,j)]}\right)  
 \quad
 \xi_{\lambda\SSTT} = \prod^{d}_ { i =1} \left(\prod_{j=1}^{d} e_{i,j}^{[\SSTT(i,j)]}\right)
$$
(notice the ordering on these products) and we define 
$$
\xi_{\SSTS\SSTT}= 		\xi_{\SSTS\lambda}	1_\lambda \xi_{\lambda\SSTT}
$$
\end{defn} 

\begin{eg}\label{2.6}Let  $\lambda=(3,3)$, $\mu=(2,2,1,1)$, and $\nu=(2,1,2,1)$.  We let $\SSTS $ and $\SSTT$ denote the tableaux 
\begin{equation*}\label{semi}
 \Yvcentermath1 \young(113,224)\quad  \quad   \young(112,334)\;,
\end{equation*}
respectively.
We have that $\SSTS\in \SStd(\lambda,\mu)$, $\SSTT\in \SStd(\lambda,\nu)$.   
We have that  $\SSTS(2,4)=1$,  $\SSTS(2,2)=2$,   $\SSTS(1,3)=1$,  $\SSTS(1,1)=2$,  and all other $\SSTS(i,j)$ are equal to zero.
Similarly,     $\SSTT(1,2)=1$,  $\SSTT(2,3)=2$,  $\SSTT(2,4)=1$ and all other $\SSTT(i,j)=0$.  
 Therefore, 
$$
 \xi_{\SSTS\SSTT}= 
 f^{[1]}_{1,3}f^{[1]}_{2,4} 
 1_\lambda 
 e^{[1]}_{2,4}
  e^{[2]}_{2,3} 
  e^{[1]}_{1,2}.
 $$
  \end{eg}
 
We now construct a basis of the Schur  algebra over $\mathbb Z$.  This basis is 
 known as the co-determinant basis and its original construction is due to J.~A.~Green  \cite{codet}; 
  it is generalised to the  ($q$-)Schur algebras of more general complex  reflection groups  by Dipper--James--Mathas \cite{DJM}.

 \begin{thm}\label{sdajfsadjksfdahjksadfhjkafsdhjklsadfhlj}

 The Schur algebra $S^{\mathbb Z}_{n,d}$ is free as a $\ZZ$-module with basis  $$
 \{ \xi_{\SSTS\SSTT}  \mid  \SSTS\in\SStd(\lambda,\mu), \SSTT\in \SStd(\lambda,\nu) \text{ for }\lambda \in \Lambda^+_{n,d} , \mu,\nu \in \Lambda_{n,d} \}.
 $$
If $\SSTS \in \SStd(\lambda,-)$, $\SSTT\in \SStd(\lambda,-)$  for some
      $\lambda\in\Lambda^+_{n,d}$, and $a\in S^{\mathbb Z}_{n,d} $ then 
    there exist scalars $r(a;\SSTS,\SSTU) \in \ZZ$, which do not depend on
    $\SSTT$, such that 
      \[ a\xi_{\SSTS\SSTT}   =\sum_{\SSTU \in
      \SStd(\lambda,-)}r(a;\SSTS,\SSTU)\xi_{\SSTU\SSTT}\quad
      {{\rm mod}\; (S^\ZZ_{n,d})^{\vartriangleright  \lambda}} \]
      where $(S^\ZZ_{n,d})^{\vartriangleright  \lambda}$ is the two-sided ideal generated by the idempotent $\sum_{\{\mu\in \Lambda_{n,d}\mid \mu\vartriangleright  \lambda\} }1_\mu$.  The ideal 
      $(S^\ZZ_{n,d})^{\vartriangleright  \lambda}$  is
        spanned by
      \[\{\xi_{\sf QR}\mid  
      {\sf Q,R}\in \SStd(\mu , - ), \mu\in \Lambda_{n,d}^+, \mu \vartriangleright  \lambda\}.\]
Moreover, the $\ZZ$-linear map $*:S^{\mathbb Z}_{n,d} \to S^{\mathbb Z}_{n,d} $ determined by
      $(\xi_{\SSTS\SSTT})^*=\xi_{\SSTT\SSTS}$, for all $\lambda\in\Lambda^+_{n,d}$ and
      all $\SSTS,\SSTT\in\SStd(\lambda,-)$, is an anti-isomorphism of $S^\ZZ_{n,d}$.
  Therefore the Schur algebra is a cellular algebra in the sense of \cite{GL}.

  \end{thm}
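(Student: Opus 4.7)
The plan is to establish each clause by leveraging the faithful action of $S^\ZZ_{n,d}$ on tensor space from the preceding proposition, using Murphy's basis of $\Ten$ (\cref{murphythm}) as the bridge between the two sides of the statement. First, for the spanning claim, I would observe that $S^\ZZ_{n,d}$ is generated over $\ZZ$ by the $1_\mu$ together with the divided powers of $e_{i,i+1}$ and $f_{i,i+1}$. Using (R3)--(R6), every monomial can be rewritten with all idempotents collapsed to a single factor $1_\lambda$ in the middle: namely, in the form (word in divided $f$s)$\,1_\lambda\,$(word in divided $e$s). A standard straightening argument — in the spirit of Lusztig's Kostant $\ZZ$-form — then reorders the factors into the canonical product defining $\xi_{\SSTS\lambda}$ on the left and $\xi_{\lambda\SSTT}$ on the right, each time invoking (R2) to swap two out-of-order divided powers and absorbing the correction either into a semistandard reordering or into a term with an idempotent $1_\mu$, $\mu \vartriangleright \lambda$.

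Second, to establish linear independence and pin down the cardinality, I would exploit the action on $\Ten$. The key computation is that for $\SSTT\in \SStd(\lambda,\mu)$,
\[
\xi_{\SSTS\SSTT}\cdot e_{\SSTT^\mu}=\rho_{\SSTS\stt^\lambda}+(\text{terms of strictly dominant weight}),
\]
since $1_\lambda$ annihilates everything not of weight $\lambda$, $\xi_{\lambda\SSTT}$ raises $e_{\SSTT^\mu}$ to a nonzero multiple of $e_{\SSTT^\lambda}$ modulo strictly dominant noise, and $\xi_{\SSTS\lambda}$ then deposits entries row-by-row to produce the Murphy basis element $\rho_{\SSTS\stt^\lambda}$. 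Because \cref{murphythm} asserts the linear independence of the $\rho_{\SSTS\stt}$, a triangularity-on-dominance argument gives linear independence of the $\xi_{\SSTS\SSTT}$; comparing cardinalities with $\dim_\Bbbk \End_{\mathfrak{S}_n}(\Ten)$ simultaneously recovers Schur--Weyl duality and confirms we have a full basis.

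Third, the cellular multiplication rule follows by observing that left multiplication acts only on the leftmost factor: $a\,\xi_{\SSTS\SSTT}=\bigl(a\,\xi_{\SSTS\lambda}1_\lambda\bigr)\,\xi_{\lambda\SSTT}$, and the very same straightening used in Step 1 rewrites $a\,\xi_{\SSTS\lambda}1_\lambda$ as $\sum_{\SSTU}r(a;\SSTS,\SSTU)\,\xi_{\SSTU\lambda}1_\lambda$ modulo terms in $\sum_{\mu\vartriangleright\lambda}S^\ZZ_{n,d}\,1_\mu\,S^\ZZ_{n,d}$, with coefficients visibly independent of $\SSTT$; right-multiplying by $\xi_{\lambda\SSTT}$ yields the claimed congruence, and the asserted basis of $(S^\ZZ_{n,d})^{\vartriangleright\lambda}$ is precisely what the same straightening delivers when applied to generators of that ideal. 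For the anti-involution, one defines $*$ on generators by $1_\mu\mapsto 1_\mu$, $e_{i,i+1}^{[m]}\leftrightarrow f_{i,i+1}^{[m]}$, and reverses the order of products; the relations (R1)--(R6) are symmetric under this swap, so $*$ is an anti-automorphism, and comparing with \cref{basiselements} shows $(\xi_{\SSTS\SSTT})^*=\xi_{\SSTT\SSTS}$. The main obstacle throughout is the straightening algorithm: one has to verify that the commutator (R2), extended inductively from simple roots to all the $e_{i,j}^{[m]}$ and $f_{k,\ell}^{[m']}$, never produces a term that escapes both the semistandard canonical form and the dominance ideal — this is the essential input that makes the cellular structure work, and it is precisely the content of Green's codeterminant construction \cite{codet}, later axiomatised by Dipper--James--Mathas \cite{DJM}.
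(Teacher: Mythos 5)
Your proposal takes essentially the same route as the paper: the paper establishes the Doty--Giaquinto action on tensor space and Murphy's basis of $\Ten$, and then simply invokes the Dipper--James--Mathas semistandard basis theorem as a black box. Your sketch unpacks the ingredients of that theorem (a straightening argument for spanning, the faithful tensor-space action against Murphy's basis for linear independence and triangularity, generator-swapping for the anti-involution), and correctly defers the crucial straightening computation to Green's codeterminant construction and Dipper--James--Mathas --- which is precisely what the paper's one-line citation encapsulates.
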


 \begin{proof}
  Having established the action of Doty and Guiaquinto's presentation on tensor space, and Murphy's basis of tensor space,  the above follows from 
   \cite[The semistandard basis theorem]{DJM}.  
 \end{proof}
\begin{defn}
Given  $\Bbbk$ an algebraically closed field of   characteristic $p\geq 0$, we define the   Schur
algebra $S^{\Bbbk}_{n,d}:=S^{\mathbb Z}_{n,d} \otimes \Bbbk$.    
\end{defn}

 \begin{defn} \label{definition: cell module}
Given  $\lambda\in\Lambda_{n,d}^+$, we define the  {\sf Weyl module} $\Delta^\ZZ(\lambda)$  to be the left $S^\ZZ_{n,d}$--module  with  basis 
  $$\{\xi_{\SSTS\SSTT^\lambda} + (S^\ZZ_{n,d})^{\vartriangleright \lambda} \mid \SSTS \in \SStd(\lambda,-) \}$$
  and the {\sf dual Weyl module} $\nabla^\ZZ(\lambda)$ to be the left $S^\ZZ_{n,d}$--module  with  basis 
    $$\{\rho_{\SSTS\stt^\lambda} + \Ten^{\vartriangleright \lambda} \mid \SSTS \in \SStd(\lambda,-) \},$$
    where $ \Ten^{\vartriangleright \lambda} $ is 
    left $S^\ZZ_{n,d}$--module of $\Ten$  with  basis 
  $$\{\rho_{\SSTS\stt}   \mid \SSTS \in \SStd(\mu,-),
   \stt \in \Std(\mu), \mu \vartriangleright \lambda \}$$
  We let  $\Delta^ \Bbbk(\lambda)$ (respectively
   $\nabla^ \Bbbk(\lambda)$)  denote the module  $\Delta^\ZZ(\lambda)\otimes_R \Bbbk$
   (respectively $\nabla^\ZZ(\lambda)\otimes_R \Bbbk$).  When the context is clear, we drop the ring over which the module is defined.  
 
  \end{defn}

   \begin{defn}
If a   module, $M$,  has a filtration of the form
$$0=M_0 \subset M_1\subset M_2 \subset \dots \subset M_k = M$$
where each $M_{i+1}/M_i$ for $1\leq i \leq k$ is isomorphic to some 
$\Delta(\lambda^{(i)})$  (respectively $\nabla(\lambda^{(i)})$)
for some $ \lambda^{(i)}\in \Lambda^+_{n,d}$, then we say that $M$ has a 
$\Delta $-  (respectively $\nabla $-) filtration and write 
$M\in \mathcal{F}(\Delta)$ (respectively $M\in \mathcal{F}(\nabla)$).   
  \end{defn}

%
%
%

Given any $\lambda\in\Lambda^+_{n,d}$  the Weyl module, $\Delta(\lambda) $, is equipped with a bilinear form $\langle\ ,\ \rangle_\lambda$  
 determined by
\[\xi_{\SSTU \SSTS}\xi_{\SSTT \SSTV}\equiv
  \langle \xi_{\SSTS\SSTT^\lambda},\xi_{\SSTT\SSTT^\lambda}\rangle_\lambda \xi_{\SSTU,\SSTV}
  \pmod   {(S^\ZZ_{n,d})^{\vartriangleright  \lambda}  }\] 
for   $\SSTS,\SSTT, \SSTU,\SSTV\in \SStd(\lambda, -  )$.
 We define    $L(\lambda)$ to be  the quotient of the corresponding Weyl module $\Delta(\lambda)$ by the radical of the bilinear form $\langle\ ,\ \rangle_\lambda$.   
Finally we denote by
$I(\lambda)$ the injective envelope of $L(\lambda)$ as an $S^\Bbbk_{n,d}$-module.


  \subsection{Generalised symmetric powers}
For $\lambda\in\Lambda^+_{n,d}$, $\mu\in\Lambda_{n,d}$ and $\stt \in \Std(\lambda)$ let $\mu( \stt) $ be the $\lambda$-tableau of weight $\mu$ obtained from $\stt$ by replacing each entry $i$ in $\stt$ by $r$ if $i$ appears in row $r$ of $\stt^\mu$. Given $\stt \in \Std (\lambda)$, we let $[\stt]_\mu$ denote the set   $\{  \sts \in \Std(\lambda)\mid \mu(\sts)=\mu(\stt)\}$.  
If $\SSTT\in\SStd(\lambda,\mu)$, we write $\stt \in \SSTT$ if $\mu(\stt) = \SSTT$. On the other hand, it will be convenient to say that $\mu(\stt)=0$, whenever $\mu(\stt)$ is not semistandard. Finally, for $\SSTS, \SSTT\in\SStd(\lambda,\mu)$ we set 
\begin{align}\label{youresumthatineedtogetby}
\rho_{\SSTS \SSTT }:=\sum_{\stt\in\SSTT} \rho_{\SSTS  \stt}. 
\end{align}
  \begin{rmk}
  In the case that $\mu = \omega$, the map $\omega:\Std(\lambda) \to \SStd(\lambda,\omega)$  is the bijective map which identifies 
standard tableaux with semistandard tableaux of weight $\omega$.  

\end{rmk}

 \begin{eg}
Let $n=4$ and $d=2$.  Adopting the same notation as in Example \ref{example:1} it is easy to observe that
there is a unique  element  of 
$\SStd(\lambda,(2,2))$   for each $\lambda \in \Lambda^+_{4,2}$.  These are the tableaux
 $\SSTS$, $\SSTT$ and $\SSTU$ of  \cref{example:1}.  
The pullback  under $\Std(\lambda) \to \SStd(\lambda,(2,2))$ is given by
  $$ [\sts_1]_{(2,2)}=\{\sts_1\}
 \quad
  [\stt_1]_{(2,2)}=\{\stt_1,\stt_2\} 
 \quad
 [\stu]_{(2,2)}=\{\stu\},
 $$for $\lambda$ equal to $(2,2), (3,1)$ and $(4)$, respectively.   
Therefore
\begin{align*}
\rho_{\SSTT \SSTT}=\rho_{\SSTT \stt_1}+\rho_{\SSTT \stt_2}
 =&
e_1\otimes e_2\otimes e_1\otimes e_2
+
e_2\otimes e_1\otimes e_1\otimes e_2 \\
&+
e_1\otimes e_2\otimes e_2\otimes e_1 
+
e_2\otimes e_1\otimes e_2\otimes e_1
+
2 e_1\otimes e_1\otimes e_2\otimes e_2.
\end{align*}
 \end{eg}

 \begin{defn}
Given $\mu \in \Lambda_{n,d}$, 
 we let 
 $$\Sym^\mu(\Bbbk^d)=\Sym^{\mu_1}(\Bbbk^d) \otimes \dots \otimes \Sym^{ d}(\Bbbk^d)$$    
 denote the  generalised symmetric   power of the natural $\GL_d$-module, $\Bbbk^d$.   
   \end{defn}

\begin{prop}\label{Crispysabsis}
The module $\Sym^\mu(\Bbbk^d)$ has  basis given by   sums of elements in the   Murphy basis of tensor space of   \cref{murphythm}, 
 as follows
$$ 
\{
\rho_{\SSTS \SSTT} \mid
 \SSTS \in \SStd(\lambda,\nu) , 
 \SSTT \in  \SStd(\lambda,\mu), {\lambda \in \Lambda^+_{n,d}}, {\nu \in \Lambda_{n,d}}   \}.  
$$
  \end{prop}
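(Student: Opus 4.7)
The plan is to realise $\Sym^\mu(\Bbbk^d)$ inside $\Ten$ as the submodule of $\mathfrak{S}_\mu$-invariants and then to identify the claimed elements as an explicit basis of this submodule. First, each factor $\Sym^{\mu_i}(\Bbbk^d)$ sits inside $(\Bbbk^d)^{\otimes \mu_i}$ as the $\mathfrak{S}_{\mu_i}$-fixed subspace, spanned by orbit sums of elementary tensors; tensoring these embeddings gives $\Sym^\mu(\Bbbk^d) \cong \Ten^{\mathfrak{S}_\mu}$. A $\ZZ$-basis of $\Ten^{\mathfrak{S}_\mu}$ is the set of $\mathfrak{S}_\mu$-orbit sums of elementary tensors, so $\dim \Sym^\mu(\Bbbk^d) = \prod_i \binom{d+\mu_i-1}{\mu_i}$.

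Next I would verify that each $\rho_{\SSTS\SSTT}$ lies in $\Ten^{\mathfrak{S}_\mu}$. By definition,
\[
\rho_{\SSTS\SSTT} = \mathcal{O}_\lambda(e_\SSTS)\sum_{\stt\in\SSTT} d_\stt,
\]
and $\mathcal{O}_\lambda(e_\SSTS)$ is fixed by right multiplication by $\mathfrak{S}_\lambda$. For $\sigma\in\mathfrak{S}_\mu$ we therefore have $\rho_{\SSTS\stt}\cdot \sigma = \rho_{\SSTS\stt'}$, where $d_{\stt'}$ is the distinguished representative of the right $\mathfrak{S}_\lambda$-coset containing $d_\stt\sigma$. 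Since $d_\stt$ and $d_{\stt'}$ lie in the same $(\mathfrak{S}_\lambda,\mathfrak{S}_\mu)$-double coset of $\mathfrak{S}_n$, we have $\mu(\stt') = \mu(\stt) = \SSTT$. Thus right multiplication by $\sigma$ permutes the set $\{\stt\in\Std(\lambda) : \mu(\stt)=\SSTT\}$, and $\rho_{\SSTS\SSTT}$ is $\mathfrak{S}_\mu$-invariant.

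Linear independence then follows from \cref{murphythm}: the classes $[\stt]_\mu$ partition $\Std(\lambda)$ as $\SSTT$ ranges over $\SStd(\lambda,\mu)$, so the $\rho_{\SSTS\SSTT}$ for distinct pairs $(\SSTS,\SSTT)$ are supported on pairwise disjoint subsets of Murphy's basis of $\Ten$. Spanning is then a matter of dimensions:
\[
\sum_{\lambda} K_{\lambda\mu} \cdot \sum_\nu K_{\lambda\nu} = \sum_\lambda K_{\lambda\mu}\cdot \dim\Delta(\lambda) = h_\mu(1^d) = \dim\Sym^\mu(\Bbbk^d),
\]
the middle equality being the classical Kostka identity $h_\mu = \sum_\lambda K_{\lambda\mu}s_\lambda$ evaluated at $x_1=\cdots=x_d=1$.

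The main obstacle is the invariance check in the second paragraph, which rests on identifying the class $\{\stt\in\Std(\lambda) : \mu(\stt)=\SSTT\}$ with the set of distinguished $\mathfrak{S}_\lambda$-right-coset representatives inside a fixed $(\mathfrak{S}_\lambda,\mathfrak{S}_\mu)$-double coset in $\mathfrak{S}_n$; once this combinatorial correspondence is in place, the $\mathfrak{S}_\mu$-invariance is formal.
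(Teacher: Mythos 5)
Your proposal is correct and follows essentially the same route as the paper: both realise $\Sym^\mu(\Bbbk^d)$ as the $\mathfrak{S}_\mu$-invariants in $\Ten$, show each $\rho_{\SSTS\SSTT}$ lies there via the orbit/double-coset structure (your distinguished-coset argument is precisely the content of the stabiliser statement the paper cites from Mathas), deduce linear independence from the disjointness of the orbits $\{\stt : \mu(\stt)=\SSTT\}$ within the Murphy basis, and finish by a dimension count. The only cosmetic difference is that you invoke the classical Kostka identity $h_\mu = \sum_\lambda K_{\lambda\mu}s_\lambda$ evaluated at $1^d$, while the paper phrases the same count via the $\nabla$-filtration multiplicities $[\Sym^\mu(\Bbbk^d):\nabla(\lambda)]=|\SStd(\lambda,\mu)|$; these are the same identity in two languages.
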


\begin{proof}
For each $\SSTS \in \SStd(\lambda,\nu) $ and $
 \SSTT \in  \SStd(\lambda,\mu)$ 
 we have that $\mathfrak{S}_\mu$ acts transitively on the set $\{\rho_{\SSTS   \stt  } \mid \stt \in \SSTT\}$. Moreover, the   
stabiliser of any element $\rho_{\SSTS   \stt  }$ is
 $\mathfrak{S}_\mu \cap d_\stt^{-1} \mathfrak{S}_\lambda d_\stt $ (see for example \cite[Proposition 4.4]{Mathasbook}).   Therefore the element $\rho_{\SSTS  \SSTT}$ is fixed by the action of $\mathfrak{S}_\mu$.
Hence, for every 
  $\SSTS \in \SStd(\lambda,\nu) $ and $
 \SSTT \in  \SStd(\lambda,\mu)$ 
 we have that $\rho_{\SSTS \SSTT } \in \Sym^\mu(\Bbbk^d)$.

The elements $\rho_{\SSTS  \stt }$ are linearly independent 
and the orbits  $\{\stt \mid \mu( \stt)=\SSTT\}$ 
for $\SSTT \in \SStd(\lambda,\mu)$  are disjoint. Therefore the elements  $\rho_{\SSTS  \SSTT }$ are linearly independent (over any field, as their coefficients in the sum in \cref{youresumthatineedtogetby} are all 0 or 1). 
 The result now follows from a dimension count using the formula 
 $$
[ \Sym^\mu(\Bbbk^d):\nabla(\lambda)] = |\SStd(\lambda,\mu)| $$
and the fact that   $\nabla(\lambda)$ has  basis indexed 
 by the set $\SStd(\lambda,\nu)$.  
\end{proof}

  \begin{prop}[Section 4.8 \cite{green}]\label{greenstuff}
  The injective indecomposable $S^{\Bbbk}_{n,d}$-modules are precisely the indecomposable summands of $\Sym^\mu(\Bbbk^d)$ for $\mu \in \Lambda^+_{n,d}$.  
 For  $\mu, \lambda \in \Lambda^+_{n,d}$, we have $$[\Sym^\mu(\Bbbk^d): I(\lambda)] = K_{\mu \lambda}= \dim L_\lambda(\mu)$$
 where the coefficients, $K_{\mu \lambda}$, are known as the $p$-Kostka numbers.  In particular,
$[\Sym^\mu(\Bbbk^d): I(\lambda)] =1$ for $\lambda=\mu$ and 0
unless $\mu \trianglelefteq \lambda$.  
  \end{prop}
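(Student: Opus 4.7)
The proposition is a classical result due to Green; my plan is to assemble it from the $\nabla$-filtration data already in hand (\cref{Crispysabsis}) plus contravariant self-duality and Schur--Weyl duality, and then to pin down the multiplicities by character theory.

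First I would prove injectivity of $\Sym^\mu(\Bbbk^d)$. By \cref{Crispysabsis}, the basis $\{\rho_{\SSTS\SSTT}\}$ respects the cellular filtration of \cref{sdajfsadjksfdahjksadfhjkafsdhjklsadfhlj}, so $\Sym^\mu(\Bbbk^d)$ has a $\nabla$-filtration with $[\Sym^\mu(\Bbbk^d):\nabla(\lambda)]=|\SStd(\lambda,\mu)|$. The anti-involution $*$ of \cref{sdajfsadjksfdahjksadfhjkafsdhjklsadfhlj} induces a contravariant duality $\circledast$ on $S^\Bbbk_{n,d}$-mod that interchanges $\Delta(\lambda)\leftrightarrow\nabla(\lambda)$ and fixes $L(\lambda)$; the standard bilinear form on symmetric powers shows $\Sym^\mu(\Bbbk^d)^\circledast\cong\Sym^\mu(\Bbbk^d)$, yielding also a $\Delta$-filtration and hence partial-tilting status. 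Full injectivity is then obtained from Schur--Weyl duality: $S^\Bbbk_{n,d}=\End_{\Bbbk\mathfrak{S}_n}(\Ten)$ makes $\Ten$ a projective--injective $S^\Bbbk_{n,d}$-module, and the symmetrising idempotents exhibit $\Sym^\mu(\Bbbk^d)$ as a direct summand of $\Ten$, so it inherits injectivity. Writing $\Sym^\mu(\Bbbk^d)=\bigoplus_\lambda I(\lambda)^{m_\lambda}$, any injective indecomposable summand of some $\Sym^\mu(\Bbbk^d)$ appears this way, giving the first assertion.

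To compute the multiplicity, I would use the identity $m_\lambda=\dim\Hom_{S^\Bbbk_{n,d}}(L(\lambda),\Sym^\mu(\Bbbk^d))$ (which holds because $\Sym^\mu(\Bbbk^d)$ is injective and $I(\lambda)$ is the injective envelope of $L(\lambda)$, so $\dim\Hom(L(\lambda),-)$ counts summands isomorphic to $I(\lambda)$). A generator of the $\mu$-weight space of $\Sym^\mu(\Bbbk^d)$ generates the whole module, so $\Hom_{S^\Bbbk_{n,d}}(L(\lambda),\Sym^\mu(\Bbbk^d))$ is naturally identified with $L(\lambda)_\mu$: a homomorphism is determined by where it sends the $\mu$-weight socle component, and every vector in $L(\lambda)_\mu$ extends uniquely. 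This gives $m_\lambda=\dim L(\lambda)_\mu$. Cross-checking: combining this with BGG reciprocity $[I(\lambda):\nabla(\tau)]=[\Delta(\tau):L(\lambda)]$ and the multiplicities from \cref{Crispysabsis} recovers the classical character identity $|\SStd(\tau,\mu)|=\sum_\lambda(\dim L(\lambda)_\mu)[\Delta(\tau):L(\lambda)]$, which is consistent with $\dim\Delta(\tau)_\mu=|\SStd(\tau,\mu)|$. Identifying this common integer with the $p$-Kostka number $K_{\mu\lambda}$ (as defined via $[M^\mu:Y^\lambda]$ on the symmetric group side) follows from the compatibility of the Schur functor with these filtrations, which is standard.

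Finally, the unitriangularity is immediate: the highest weight of $\Sym^\mu(\Bbbk^d)$ under the dominance order is $\mu$ itself, so $I(\lambda)$ can appear only if $\lambda\trianglerighteq\mu$; and since $\dim L(\mu)_\mu=1$ (the highest weight space of a simple is one-dimensional), $m_\mu=1$. The main obstacle in this plan is establishing injectivity rigorously --- the Schur--Weyl step requires the identification $S^\Bbbk_{n,d}=\End_{\Bbbk\mathfrak{S}_n}(\Ten)$ together with care that $\Sym^\mu(\Bbbk^d)$ really is a summand (not merely a subquotient) of $\Ten$; everything afterwards is a routine consequence of cellularity, \cref{Crispysabsis}, and BGG reciprocity.
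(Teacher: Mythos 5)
The paper does not itself prove this statement, deferring to Section 4.8 of Green's book, so what follows assesses your plan against the classical argument. Your handling of the multiplicity formula and of unitriangularity would be acceptable once injectivity is secured, but both routes you propose for injectivity fail in positive characteristic, which is the only case of interest. The contravariant dual of $\Sym^\mu(\Bbbk^d)$ is the divided-power module $\Gamma^\mu(\Bbbk^d)=\Ten^{\mathfrak{S}_\mu}$, \emph{not} $\Sym^\mu(\Bbbk^d)$ itself: for instance in characteristic $2$, $\Sym^2(\Bbbk^2)\cong\nabla(2,0)$ while $\Gamma^2(\Bbbk^2)\cong\Delta(2,0)$, and these are non-isomorphic, so the ``standard bilinear form'' you appeal to has degenerated. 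The Schur--Weyl route has the same flaw: the symmetrising idempotent of $\mathfrak{S}_\mu$ is not an element of $\Bbbk\mathfrak{S}_n$ when $p$ divides $|\mathfrak{S}_\mu|$, and $\Sym^\mu(\Bbbk^d)$ is then genuinely \emph{not} a direct summand of $\Ten$. Concretely, for $p=2$, $n=d=2$, $\mu=(2,0)$, the module $\Ten=(\Bbbk^2)^{\otimes 2}$ is a $4$-dimensional \emph{indecomposable} $S^\Bbbk_{2,2}$-module (it is $P(1,1)=I(1,1)$, since $\End(\Ten)\cong\Bbbk\mathfrak{S}_2$ is local), while $\Sym^2(\Bbbk^2)$ has dimension $3$ and is therefore a proper quotient, not a summand; here $\Sym^2(\Bbbk^2)\cong I(2,0)$ is still injective, so the proposition holds, but your argument does not prove it. There is also a circularity: you invoke the injectivity of $\Ten=\Sym^\omega(\Bbbk^d)$, which is the $\mu=\omega$ case of the very claim being proved.

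The fix is to run the duality through the projective on the other side. Frobenius reciprocity identifies $S^\Bbbk_{n,d}1_\mu\cong\Hom_{\mathfrak{S}_n}(\Ten_\mu,\Ten)\cong\Ten^{\mathfrak{S}_\mu}=\Gamma^\mu(\Bbbk^d)$, which is projective as a direct summand of the regular representation. The bilinear form on $\Ten$ induces a perfect contravariant pairing between the submodule $\Gamma^\mu(\Bbbk^d)$ and the quotient $\Sym^\mu(\Bbbk^d)$, so $\Sym^\mu(\Bbbk^d)\cong\bigl(\Gamma^\mu(\Bbbk^d)\bigr)^\circ$, where $(-)^\circ$ is the contravariant duality built from the anti-involution $*$ of \cref{sdajfsadjksfdahjksadfhjkafsdhjklsadfhlj}. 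Since $(-)^\circ$ is exact, contravariant, fixes every $L(\lambda)$, and carries projectives to injectives, $\Sym^\mu(\Bbbk^d)$ is injective. The multiplicity then falls out without the ``extends uniquely'' hand-wave: $[\Sym^\mu(\Bbbk^d):I(\lambda)]=\dim\Hom_{S^\Bbbk_{n,d}}\bigl(L(\lambda),(\Gamma^\mu)^\circ\bigr)=\dim\Hom_{S^\Bbbk_{n,d}}\bigl(S^\Bbbk_{n,d}1_\mu,L(\lambda)\bigr)=\dim L(\lambda)_\mu$, and the unitriangularity is as you describe.
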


\section{Isomorphisms between subquotients of  Schur algebras}\label{Sec:proofs}

In this section, we prove the main results of this paper.
In \cref{ACT1} we   consider 
certain subsets, $\Lambda^+_{n,d}(r,c,m)\subseteq \Lambda^+_{n,d}$.  
We recall the definition of generalised row cuts on pairs of partitions and  show that if 
$(\la,\mu)$  admit  such a cut and $\la \vartriangleright \mu$, then $\la$ and $\mu$ both belong to one our subsets $\Lambda^+_{n,d}(r,c,m)$.  
  In \cref{ACT2,ACT3,ACT4,ACT5} we construct explicit isomorphisms between certain subquotients  of the Schur algebras corresponding to the sets $\Lambda^+_{n,d}(r,c,m)$; all of these isomorphisms are given simply on the level of the tableaux bases.

The subquotients in which we are interested are of the following form.  
\begin{defn}
Let $P$ denote a partially ordered set and $Q$ denote a subset of $P$.  
We say that $Q$ is {\sf saturated} if for any $\alpha\in Q$ and $\beta \in P$ with $\beta \vartriangleleft \alpha$, we have that $\beta \in Q$. We say that $Q$ is {\sf co-saturated} if its complement in $P$ is saturated.  
If a set  is saturated, co-saturated, or the intersection of a saturated and a co-saturated set, we shall say that it is {\sf closed} under the dominance order.  
\end{defn}

\begin{defn}
Let $M$ be a $S^\Bbbk_{n,d}$-module, and $\pi \subseteq \Lambda^+_{n,d}$ denote some closed subset under the dominance order.  
We say that $M$ {\sf belongs} to $\pi$ if the simple composition factors of $M$ are labelled by weights from $\pi$.  
 We write $M \in \mathcal{F}_\pi(\Delta)$ (respectively  $M \in \mathcal{F}_\pi(\nabla)$) if $M$ has a $\Delta$-filtration (respectively $\nabla$-filtration)  in which the $\Delta$ (respectively $\nabla$) factors  are labelled by weights from $\pi$.  
\end{defn}

We shall use standard facts about saturated and co-saturated sets in what follows, referring to \cite[Appendix]{Donkin} for more details.  
 Much of the representation theoretic information is preserved under taking such subquotients. 
 In \cref{ACT5} we then deduce that higher extension groups and decomposition numbers are preserved under taking generalised row and column cuts, thus simplifying the proof and   extending the (unquantised) results of \cite{James,Donkin1,LM} and \cite[4.2(17)]{Donkin}.  
 In \cref{ACT6}, we consider the image of the generalised symmetric powers under these functors and hence prove that $p$-Kostka numbers are preserved under generalised row and column removal.

\subsection{Combinatorics of partitions and generalised  row cuts}\label{ACT1} We now recall the combinatorics of generalised row    cuts.   
 \begin{defn}
Given $r,c,m \in \mathbb{N}$,  we let $\Lambda_{n,d}(r,c,m)\subseteq \Lambda_{n,d}$ denote the set 
$$  
\{
  \lambda \in \Lambda_{n,d} \mid
   \lambda_j\leq c \leq \lambda_i, 
   \text{ for } 1\leq i \leq r {\text{ and }} r+1\leq j \leq d,   
    |\lambda^T| = m
\}
$$
and we let 
$$
\Lambda^+_{n,d}(r,c,m)=  \{
 \lambda \in \Lambda^+_{n,d} \mid
\lambda_r \geq c \geq \lambda_{r+1},     |\lambda^T| = m\}
$$
in other words, $\Lambda^+_{n,d}(r,c,m)= \Lambda^+_{n,d}\cap \Lambda_{n,d}(r,c,m)$.
Extending the above notation we denote by $\Lambda^+_{n,d}(0,c,0)$ the subset of $\Lambda^+_{n,d}$ consisting of all the partitions $\lambda$ such that $\lambda_1\leq c$.
\end{defn}

\begin{rmk}The subset  $ \Lambda^+_{n,d}(r,c,m) \subseteq  \Lambda^+_{n,d}$  can be thought of diagrammatically as in   \cref{asjksfjkdhsd}.  \end{rmk}
\begin{figure}[ht!]
$$\scalefont{0.8}
\begin{tikzpicture}[scale=1]
   \path (0,0) coordinate (origin);     
   \path (0,0)--++(-90:7*0.2)--++(180:2*0.2) coordinate (origin1);
    \draw[very thick] (origin1) --++(0:22*0.2);
  \path  (origin)--++(-90:7*0.2)--++(180:3*0.2)
  node {${r}$} ; 
    \path  (0,0)--++(0:9*0.2)--++(90:3*0.2) coordinate 
  node {${c}$} ; 
\path (0,0)--++(0:9*0.2)--++(90:2*0.2) coordinate (origin2);
    \draw[very thick] (origin2) --++(-90:15*0.2);
    \draw (origin) 
   --++(0:20*0.2)
      --++(-90:1*0.2)
 --++(180:1*0.2)
   --++(-90:1*0.2)
 --++(180:2*0.2) --++(-90:1*0.2)
 --++(180:2*0.2)
   --++(-90:2*0.2)
    --++(180:1*0.2) --++(-90:1*0.2)
 --++(180:4*0.2)
 --++(-90:1*0.2)
 --++(180:2*0.2)
    --++(180:1*0.2)
 --++(-90:3*0.2)
   --++(180:1*0.2)
 --++(-90:2*0.2) --++(180:1*0.2)
 --++(-90:2*0.2)
   --++(180:2*0.2)
    --++(-90:1*0.2) --++(180:3*0.2)
--++(90:15*0.2)
   ;
   \draw [decorate,decoration={brace,amplitude=3pt},xshift=-2pt]   (0,-1.3) --(0,-.0)  node [black,midway,xshift=-0.4cm]{$\lambda^T$} ; 
   \draw [decorate,decoration={brace,amplitude=3pt},xshift=-2pt]   (0,-3) --(0,-1.5)  node [black,midway,xshift=-0.4cm]{$\lambda^B$} ;  

   \end{tikzpicture}
$$
\caption{ 
A partition  $\lambda$ such that     $\lambda_{r} \geq c\geq\lambda_{r+1}$. 
}
\label{asjksfjkdhsd}
\end{figure}

 \begin{prop}\label{sfadjfasdhjkdsfaj}
The map $\lambda\mapsto \lambda^T\times\lambda^B$ is a bijection between 
$\Lambda_{n,d}(r,c,m)$ and 
$\Lambda_{m,r}(r,c,m)\times\Lambda_{n-m,d-r}(0,c,0)$. Moreover, for $\lambda,\mu\in\Lambda_{n,d}(r,c,m)$, we have that $\lambda\trianglerighteq\mu$ if and only if $\lambda^T\trianglerighteq\mu^T$ and $\lambda^B\trianglerighteq\mu^B$.
\end{prop}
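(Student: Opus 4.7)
The plan is to verify both assertions by direct, elementary calculations with partial sums, since the sets $\Lambda_{n,d}(r,c,m)$ are defined by uniform constraints on the top and bottom blocks.

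For the bijection, I would observe that the map $\lambda\mapsto(\lambda^T,\lambda^B)$ has the concatenation map $(\alpha,\beta)\mapsto(\alpha_1,\ldots,\alpha_r,\beta_1,\ldots,\beta_{d-r})$ as an obvious two-sided inverse on the level of sequences. The only content is to verify that both maps respect the three defining conditions: the size condition $|\lambda^T|=m$, the inequalities $\lambda_i\geq c$ for $i\leq r$, and the inequalities $\lambda_j\leq c$ for $j\geq r+1$. Unpacking the definitions, $\lambda^T\in\Lambda_{m,r}(r,c,m)$ exactly encodes the first inequality (the ``$j\geq r+1$'' clause is vacuous here since $\lambda^T$ has only $r$ entries and the size condition in the target is $|\lambda^T|=m$), and $\lambda^B\in\Lambda_{n-m,d-r}(0,c,0)$ exactly encodes the second (the top-size condition becomes vacuous when $r=0$, and $|\lambda^B|=n-m$ forces $\lambda^B$ to be a composition of $n-m$). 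Hence the two maps are mutually inverse bijections.

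For the dominance statement, I would use that for compositions the dominance order is defined by partial sums. Set $\sigma_k(\lambda)=\sum_{i=1}^k\lambda_i$. Because $\lambda,\mu\in\Lambda_{n,d}(r,c,m)$ share the common cut size, $\sigma_r(\lambda)=\sigma_r(\mu)=m$.

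Assuming $\lambda\trianglerighteq\mu$, the inequality $\sigma_k(\lambda)\geq\sigma_k(\mu)$ for $k\leq r$ gives $\lambda^T\trianglerighteq\mu^T$ directly. For $1\leq j\leq d-r$, writing $k=r+j$, one has
\[
\sum_{i=1}^{j}\lambda^B_i=\sigma_{r+j}(\lambda)-m\geq\sigma_{r+j}(\mu)-m=\sum_{i=1}^{j}\mu^B_i,
\]
yielding $\lambda^B\trianglerighteq\mu^B$. Conversely, if both $\lambda^T\trianglerighteq\mu^T$ and $\lambda^B\trianglerighteq\mu^B$, the inequality $\sigma_k(\lambda)\geq\sigma_k(\mu)$ for $k\leq r$ is immediate, and for $k=r+j$ with $j\geq 1$ we add $m$ to both sides of the partial-sum inequality for $\lambda^B,\mu^B$ to recover $\sigma_{r+j}(\lambda)\geq\sigma_{r+j}(\mu)$. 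This exhausts all $k\leq d$ and gives $\lambda\trianglerighteq\mu$.

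There is essentially no obstacle here; the whole content is the observation that the common value $|\lambda^T|=|\mu^T|=m$ makes the partial-sum inequalities over the two blocks independent of one another. The only thing to be careful about is treating the boundary case $r=0$ correctly when invoking $\Lambda_{n-m,d-r}(0,c,0)$, and remembering that the dominance order on compositions is given by the same partial-sum definition as on partitions.
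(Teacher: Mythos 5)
Your proof is essentially the spelled-out version of the paper's one-line ``clear from the definitions,'' and the partial-sum argument is exactly the right mechanism. One point deserves tightening, though. The paper defines the dominance order on \emph{compositions} $\Lambda_{n,d}$ by \emph{pullback} under the sorting map $\Lambda_{n,d}\to\Lambda^+_{n,d}$, not by partial sums applied directly to the composition; your closing remark that the two agree is not correct in general (e.g.\ $\lambda=(1,3)$, $\mu=(2,1,1)$ satisfy $\lambda^+\trianglerighteq\mu^+$ but $\sigma_1(\lambda)<\sigma_1(\mu)$). Consequently the step ``assuming $\lambda\trianglerighteq\mu$, the inequality $\sigma_k(\lambda)\geq\sigma_k(\mu)$'' is not justified for compositions under the paper's convention. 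The repair is small and uses a structural fact you should state explicitly: for $\lambda\in\Lambda_{n,d}(r,c,m)$ every top part is $\geq c$ and every bottom part is $\leq c$, so sorting the top and bottom blocks separately and concatenating gives exactly $\lambda^+$; that is, $\lambda^+=\bigl((\lambda^T)^+,(\lambda^B)^+\bigr)$. Running your partial-sum computation on $\lambda^+,\mu^+$ (rather than on the raw compositions), together with $\sigma_r(\lambda^+)=\sigma_r(\mu^+)=m$, then yields the biconditional in the pullback order. If one restricts attention to partitions in $\Lambda^+_{n,d}(r,c,m)$ — which is the case that actually gets used downstream, e.g.\ in \cref{4.0} — your argument is already complete as written.
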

\begin{proof}
Clear from the definitions. 
\end{proof}

\begin{eg}For example, the map in \cref{sfadjfasdhjkdsfaj} takes the element in \cref{asjksfjkdhsd} to the pair of elements in \cref{asjksfjkdhsd2}.  
\end{eg}
\begin{figure}[ht!]
$$\scalefont{0.8}
\begin{tikzpicture}[scale=1]
   \path (0,0) coordinate (origin);     
   \path (0,0)--++(-90:7*0.2)--++(180:2*0.2) coordinate (origin1);
    \draw[very thick] (origin1) --++(0:22*0.2);
  \path  (origin)--++(-90:7*0.2)--++(180:3*0.2)
  node {${r}$} ; 
    \path  (0,0)--++(0:9*0.2)--++(90:3*0.2) coordinate 
  node {${c}$} ; 
\path (0,0)--++(0:9*0.2)--++(90:2*0.2) coordinate (origin2);
    \draw[very thick] (origin2) --++(-90:12*0.2);
    \draw (origin) 
   --++(0:20*0.2)
      --++(-90:1*0.2)
 --++(180:1*0.2)
   --++(-90:1*0.2)
 --++(180:2*0.2) --++(-90:1*0.2)
 --++(180:2*0.2)
   --++(-90:2*0.2)
    --++(180:1*0.2) --++(-90:1*0.2)
 --++(180:4*0.2)
 --++(-90:1*0.2)
 --++(180:2*0.2)
    --++(180:8*0.2)  
 --++(90:7*0.2)
   ;
   \draw [decorate,decoration={brace,amplitude=3pt},xshift=-2pt]   (0,-1.3) --(0,-.0)  node [black,midway,xshift=-0.4cm]{$\lambda^T$} ; 
%
   \end{tikzpicture}
   \qquad \qquad
   \begin{tikzpicture}[scale=1]
   \path (0,0) coordinate (origin);     
   \path (0,0)--++(-90:7*0.2)--++(180:2*0.2) coordinate (origin1);
    \draw[very thick] (origin1) --++(0:22*0.2);
  \path  (origin)--++(-90:7*0.2)--++(180:3*0.2)
  node {${0}$} ; 
    \path  (0,0)--++(0:9*0.2)--++(-90:3*0.2) coordinate 
  node {${c}$} ; 
\path (0,0)--++(0:9*0.2)--++(-90:4*0.2) coordinate (origin2);
    \draw[very thick] (origin2) --++(-90:12*0.2);
    \path (origin)    --++(0:8*0.2)  --++(-90:7*0.2)
coordinate (origin6);
 \draw (origin6) 
    --++(180:1*0.2)
 --++(-90:3*0.2)
   --++(180:1*0.2)
 --++(-90:2*0.2) --++(180:1*0.2)
 --++(-90:2*0.2)
   --++(180:2*0.2)
    --++(-90:1*0.2) --++(180:3*0.2)
--++(90:8*0.2)
   ;
   
   \draw [decorate,decoration={brace,amplitude=3pt},xshift=-2pt]   (0,-3) --(0,-1.5)  node [black,midway,xshift=-0.4cm]{$\lambda^B$} ;  

   \end{tikzpicture}
$$
\caption{ The element of   $ \Lambda^+_{m,r}(r,c,m)
 \times
 \Lambda^+_{n-m,d-r}(0,c,0)$ obtained from the  element in \cref{asjksfjkdhsd} under the map in \cref{sfadjfasdhjkdsfaj}. }
\label{asjksfjkdhsd2}
\end{figure}
 
Let  $r,c,m \in \mathbb{N}$ be such that
$\Lambda^+_{n,d}(r,c,m)\neq \emptyset$.  
 We note that the set $\Lambda^+_{n,d}(r,c,m)$ has a unique maximal and a unique minimal element (under the dominance ordering on partitions).
 One can describe these partitions directly, however we use \cref{sfadjfasdhjkdsfaj} to make the statements simpler.  
 The unique maximal  and minimal   elements of 
 any non-empty  $\Lambda^+_{z,r}(0,c,0)$ are  equal to
$$
\alpha(r,c,z)=(  c^{\lfloor \tfrac{z}{c}\rfloor},  z -  c \lfloor \tfrac{z}{c}\rfloor ) 
\quad
\text{and}
\quad
\zeta(r,c,z)=
( r^{\lfloor \tfrac{z}{r}\rfloor}, z -  r \lfloor \tfrac{z}{r}\rfloor )' 
$$
respectively.  
For $r,c\geq z$ we have that $\alpha(r,c,z)=(z)$ and 
$\zeta(r,c,z)=(1^z)$.

\begin{prop}
If  $ \Lambda^+_{n,d}(r,c,m)\neq \emptyset,$ then it   
has a unique    maximal element 
 $$
\sigma:= \sigma(r,c,m)=( c^{r}, \alpha(d-r,c,n-m))  +(m-cr) 
$$
and a unique minimal element
$$
\gamma:= \gamma(r,c,m) =(  c^{r}, \zeta(d-r,c,n-m))  + \zeta(r,m-cr,m-cr).
$$  

\end{prop}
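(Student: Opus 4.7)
The plan is to reduce the problem to the top and bottom factors separately via \cref{sfadjfasdhjkdsfaj}. That result furnishes a bijection $\Lambda^+_{n,d}(r,c,m) \to \Lambda^+_{m,r}(r,c,m) \times \Lambda^+_{n-m,d-r}(0,c,0)$ which is an isomorphism of posets with respect to the product of dominance orders. Consequently $\Lambda^+_{n,d}(r,c,m)$ has a unique maximum (respectively minimum) precisely when each factor does, and the extrema of the whole set are obtained by concatenating the extrema of the factors. The bottom factor $\Lambda^+_{n-m,d-r}(0,c,0)$ has unique maximum $\alpha(d-r,c,n-m)$ and unique minimum $\zeta(d-r,c,n-m)$ by the discussion immediately preceding the statement.

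For the top factor $\Lambda^+_{m,r}(r,c,m)$, I would first observe that non-emptiness forces $m \geq cr$, since every element has exactly $r$ parts each at least $c$. The subtraction map $\lambda \mapsto \lambda - (c^r)$ is then an order-preserving bijection from $\Lambda^+_{m,r}(r,c,m)$ onto the set $\Pi$ of partitions of $m-cr$ with at most $r$ parts. Order preservation in both directions is immediate, because the partial sums of $(c^r)+\nu$ differ from those of $\nu$ by the constant sequence $c,2c,\ldots,rc,rc,rc,\ldots$, so the inequalities defining dominance transfer verbatim. The unique maximum of $\Pi$ is the single-row partition $(m-cr)$, and its unique minimum is obtained by distributing the $m-cr$ boxes as evenly as possible across $r$ rows, which is precisely $\zeta(r,m-cr,m-cr)$ (the bound on part size in the third slot being vacuous for a partition of $m-cr$).

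Translating back, $\sigma^T = (c^r)+(m-cr,0,\ldots,0) = (m-c(r-1),c^{r-1})$ and $\gamma^T = (c^r) + \zeta(r,m-cr,m-cr)$. Concatenating with the extrema of the bottom factor gives exactly the formulas displayed in the statement, once one reads ``$+(m-cr)$'' and ``$+\zeta(r,m-cr,m-cr)$'' as partition addition on the first $r$ coordinates of the tuple $(c^r,\ldots)$. The fact that the resulting tuples are genuine partitions uses $m \geq cr$ together with $c \geq \alpha(d-r,c,n-m)_1$ and $c \geq \zeta(d-r,c,n-m)_1$, both of which hold by the definitions of $\alpha$ and $\zeta$.

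I do not anticipate a serious obstacle here: the whole argument is a short combinatorial verification resting entirely on \cref{sfadjfasdhjkdsfaj} and the explicit descriptions of $\alpha$ and $\zeta$ recalled just before the proposition. The only point requiring a moment's care is the order-preservation of $\lambda \mapsto \lambda - (c^r)$, which as noted above follows from a direct partial-sum comparison.
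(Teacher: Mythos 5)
Your proposal is correct and takes essentially the same approach as the paper: the paper's own proof is the single line ``This follows from \cref{sfadjfasdhjkdsfaj},'' and you have accurately filled in the details — applying the poset isomorphism of that proposition, identifying the extrema of the bottom factor via the displayed formulas for $\alpha$ and $\zeta$, and handling the top factor by the order-preserving translation $\lambda\mapsto\lambda-(c^r)$.
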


\begin{proof}
This follows from \cref{sfadjfasdhjkdsfaj}.  
\end{proof}

Having defined the maximal and minimal elements of 
$\Lambda^+_{n,d}(r,c,m)$ we now define
\begin{align*}
 \Sigma^+_{n,d}(r,c,m)=&\{
\mu \in \Lambda^+_{n,d} \mid \mu \trianglelefteq  \sigma  \}
\\
 \Gamma^+_{n,d}(r,c,m)
=&\{
\mu \in \Lambda^+_{n,d} \mid \mu \trianglerighteq \gamma   \}.
\end{align*}
The  set $\Sigma^+_{n,d}(r,c,m) 
$ (respectively 
$\Gamma^+_{n,d}(r,c,m)
 $) is clearly a saturated  
(respectively co-saturated) subset of $\Lambda^+_{n,d}$. 

We let  $\Gamma_{n,d}(r,c,m)$ (respectively $\Sigma_{n,d}(r,c,m)$)  
denote the sets of compositions 
which can be obtained from a partition in 
 $\Gamma^+_{n,d}(r,c,m)$ (respectively $\Sigma^+_{n,d}(r,c,m)$)   
 by permutation of  the rows $\{1,\ldots, r\}$ 
and the rows $\{r+1, \ldots , d\}$.  
 The  sets of  minimal and maximal elements of $\Lambda_{n,d}(r,c,m)$ are
those which are mapped to $\gamma$ and  $\sigma$ respectively under the map
 $\Lambda_{n,d}(r,c,m)\to  \Lambda_{n,d}^+(r,c,m)$.

\begin{eg}
The set $\Lambda_{10,4}^+(2,2,7)$ consists of two elements and is therefore equal to $\{\sigma,\gamma\}$ where
$\sigma=(5,2,2,1)$ and $ \gamma=(4,3,2,1)$.  
 \end{eg}

\begin{eg}
The set $\Lambda_{11,5}^+(3,2,9)$ consists of six elements 
$$(5,2^3)\quad
(4,3,2^2)\quad
(3^3,2)\quad
(5,2^2,1^2)\quad
(4,3,2,1^2)\quad
(3^3,1^2) 
$$
and here we have 
$\sigma=(5,2^3)$ and $ \gamma=(3^3,1^2) $.  
 \end{eg}

\begin{prop}
We have that 
$$ \Lambda^+_{n,d}(r,c,m) = \Sigma^+_{n,d}(r,c,m) \cap \Gamma^+_{n,d}(r,c,m)$$
\end{prop}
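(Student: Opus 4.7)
The plan is to prove the equality by double inclusion; one direction is immediate and the other requires only a short partial sum computation. First I would handle $\Lambda^+_{n,d}(r,c,m) \subseteq \Sigma^+_{n,d}(r,c,m) \cap \Gamma^+_{n,d}(r,c,m)$: by the previous proposition $\sigma$ and $\gamma$ are the maximum and minimum of $\Lambda^+_{n,d}(r,c,m)$ under dominance, so any $\lambda$ in this set satisfies $\gamma \trianglelefteq \lambda \trianglelefteq \sigma$, which is the defining condition of the right-hand side.

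For the reverse inclusion I take $\mu \in \Lambda^+_{n,d}$ with $\gamma \trianglelefteq \mu \trianglelefteq \sigma$, and must verify the three conditions $|\mu^T|=m$, $\mu_r \geq c$, and $\mu_{r+1} \leq c$. The first follows from the dominance sandwich at $k=r$: a direct inspection of the explicit formulas for $\sigma$ and $\gamma$ shows $\sum_{i=1}^r \sigma_i = \sum_{i=1}^r \gamma_i = m$, which pins $\sum_{i=1}^r \mu_i = |\mu^T|$ to $m$. For the other two I only need the upper bound $\mu \trianglelefteq \sigma$ together with $|\mu^T|=m$. Unfolding the definition gives $\sigma = (m-c(r-1), c^{r-1}, \alpha(d-r,c,n-m))$, hence $\sum_{i=1}^{r-1}\sigma_i = m-c$ and $\sigma_{r+1} \leq c$ (because every part of $\alpha(d-r,c,n-m)$ is at most $c$ by construction). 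Comparing partial sums at levels $r-1$ and $r+1$ then yields
\[
\mu_r = m - \sum_{i=1}^{r-1}\mu_i \geq m - (m-c) = c, \qquad \mu_{r+1} = \sum_{i=1}^{r+1}\mu_i - m \leq \sigma_{r+1} \leq c.
\]

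There is no substantive obstacle; the argument is just bookkeeping once the shapes of $\sigma$ and $\gamma$ are on the table. The only things to keep track of are the easy edge case $r=1$, where the inequality $\mu_r \geq c$ collapses to $\mu_1 = |\mu^T| = m \geq rc = c$, and the unfolding of the notation $(c^r, \alpha(d-r,c,n-m)) + (m-cr)$ as adding $m-cr$ to the first row of $(c^r, \alpha(d-r,c,n-m))$.
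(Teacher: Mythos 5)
Your proof is correct and follows essentially the same route as the paper's: pin $|\mu^T|=m$ from the dominance sandwich at level $r$, then extract $\mu_r \geq c$ and $\mu_{r+1}\leq c$ from $\mu \trianglelefteq \sigma$ at levels $r-1$ and $r+1$. You merely make the partial sums of $\sigma$ explicit where the paper simply invokes $\sigma_r \geq c$ and $\sigma_{r+1}\leq c$.
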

 \begin{proof}
It is clear that  $ \Lambda^+_{n,d}(r,c,m) \subseteq \Sigma^+_{n,d}(r,c,m) \cap \Gamma^+_{n,d}(r,c,m)$.   We now prove the reverse containment.  
 Suppose that $\mu \in \Lambda^+_{n,d}$ is such that
 $ 
   \gamma  
  \trianglelefteq \mu  \trianglelefteq
   \sigma  
. $ 
We have that
  $   \sum_{1\leq i \leq r}    \gamma  
 _i = m  =
 \sum_{1\leq i \leq r}    \sigma _i $ 
 and therefore 
  $$ m \leq  \textstyle \sum_{1\leq i \leq r}   \mu_i \leq m.$$ 
Therefore $ \sum_{1\leq i \leq r}   \mu_i=m$; putting this together with $\mu\trianglelefteq \sigma$ and  
  $    \sigma  
 _r \geq  c  $, we deduce that  $\mu_r \geq c$.  
  Similarly, we have that 
   $\mu\trianglelefteq \sigma$ and $    \sigma_{r+1} \leq c  $; therefore 
  $\mu_{r+1} \leq c$.  
  Therefore $\mu \in \Lambda^+_{n,d}(r,c,m)$, as required. 
 \end{proof}
\begin{defn}
 Given $\lambda,\mu \in \Lambda_{n,d}$ and 
 $1\leq r \leq d$, we say that $\lambda$ and $\mu$ admit a horizontal cut after the $r$th row if 
 $$
 \sum_{1\leq i \leq r} \lambda_i
 =
 \sum_{1\leq i \leq r} \mu_i. 
 $$
\end{defn}
  \begin{prop}\label{4.0}
  Let $\lambda,\mu \in \Lambda^+_{ n,d}$ be a pair of partitions that admits a  horizontal cut after the $r$th row.
If $\lambda \trianglerighteq \mu$, then 
 $\mu \in \Lambda_{n,d}^+(r,\lambda_r, |\lambda^T|)$.
 Moreover $\lambda \trianglerighteq \mu$ if and only if $\lambda^T \trianglerighteq \mu^T$ and $\lambda^B \trianglerighteq \mu^B$.
  \end{prop}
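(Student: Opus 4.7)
The plan is to verify the three defining conditions of $\Lambda_{n,d}^+(r,\lambda_r,|\lambda^T|)$ directly from dominance and the horizontal cut, and then deduce the dominance equivalence by a telescoping argument on partial sums (or by invoking \cref{sfadjfasdhjkdsfaj}).

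First I would set $c=\lambda_r$ and $m=|\lambda^T|=\sum_{i=1}^r\lambda_i$. The cut hypothesis immediately gives $|\mu^T|=m$, so the one remaining thing to check for membership in $\Lambda_{n,d}^+(r,c,m)$ is that $\mu_r\geq c\geq\mu_{r+1}$. For the first inequality, I combine $\sum_{i=1}^{r-1}\lambda_i\geq\sum_{i=1}^{r-1}\mu_i$ (dominance at level $r-1$) with $\sum_{i=1}^r\lambda_i=\sum_{i=1}^r\mu_i$ (the cut), subtracting to obtain $\lambda_r\leq\mu_r$. For the second, I use dominance at level $r+1$, which gives $\sum_{i=1}^{r+1}\lambda_i\geq\sum_{i=1}^{r+1}\mu_i$; subtracting the cut equation yields $\lambda_{r+1}\geq\mu_{r+1}$, and since $\lambda$ is a partition $\lambda_r\geq\lambda_{r+1}\geq\mu_{r+1}$, as required.

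For the ``moreover'' statement, once both $\lambda$ and $\mu$ lie in $\Lambda_{n,d}^+(r,\lambda_r,m)$, the equivalence $\lambda\trianglerighteq\mu\iff \lambda^T\trianglerighteq\mu^T\text{ and }\lambda^B\trianglerighteq\mu^B$ is exactly the second assertion of \cref{sfadjfasdhjkdsfaj}. If I preferred a self-contained argument I would just unpack partial sums: for $1\leq s\leq r$,
\[
\sum_{i=1}^s\lambda^T_i=\sum_{i=1}^s\lambda_i\quad\text{and}\quad \sum_{i=1}^s\mu^T_i=\sum_{i=1}^s\mu_i,
\]
so dominance at these levels for $\lambda,\mu$ is the same as dominance between $\lambda^T$ and $\mu^T$; and for $1\leq s\leq d-r$,
\[
\sum_{i=1}^{s}\lambda^B_i=\sum_{i=1}^{r+s}\lambda_i-m,\qquad \sum_{i=1}^{s}\mu^B_i=\sum_{i=1}^{r+s}\mu_i-m,
\]
so after subtracting the common constant $m$ the dominance inequality at level $r+s$ between $\lambda$ and $\mu$ is equivalent to the one at level $s$ between $\lambda^B$ and $\mu^B$.

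None of the steps look like a real obstacle: the whole statement is a short manipulation of partial sums, the only mildly subtle point being that the upper bound on $\mu_{r+1}$ uses dominance at level $r+1$ rather than at level $r$, since the cut itself says nothing about $\mu_{r+1}$ in isolation. The hardest part is really just keeping track of which index a given dominance inequality is being applied at when subtracting the cut equation.
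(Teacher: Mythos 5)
Your argument is correct and matches the paper's approach: the paper simply records the chain of inequalities $\mu_{r+1}\leq\lambda_{r+1}\leq\lambda_r\leq\mu_r$ (obtained exactly as you do, by subtracting the cut equality at level $r$ from the dominance inequalities at levels $r-1$ and $r+1$ and using that $\lambda$ is a partition) and dismisses the ``moreover'' as clear, whereas you spell out the telescoping in full. Same proof, just more explicit.
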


 \begin{proof}
Let  $\lambda, \mu \in \Lambda^+_{n,d}$ and suppose that  $\lambda$ and $\mu$ admit a horizontal cut after the $r$th row and $\la \trianglerighteq \mu$.  
 In which case, 
 $$  
 \mu_{r+1}\leq \lambda_{r+1}
 \leq \lambda_r \leq \mu_r 
 $$ 
and so   
$\mu \in \Lambda_{n,d}^+(r,\lambda_r, |\lambda^T| )$.  
The second statement is clear. 
\end{proof}

\subsection{Quotients of Schur algebras}\label{ACT2}
Given $r,c,m\in \mathbb N$, we
 have an idempotent decomposition of the identity as follows,
$$
1_{\Lambda_{n,d} \setminus  \Sigma_{n,d}({r,c,m})   } = 
 \sum_{ \mu \not \in  \Sigma_{n,d}({r,c,m})  }1_\mu
\qquad
1_{   \Sigma_{n,d}({r,c,m})    }
=
 \sum_{ \mu   \in  \Sigma_{n,d}({r,c,m})  }1_\mu.
$$
%
%
%
We shall consider the    quotient   algebras
$$S^\Bbbk ( \Sigma_{n,d}({r,c,m})    ) :=  S^{\Bbbk}_{n,d}/(S^{\Bbbk}_{n,d} 1_{\Lambda_{n,d} \setminus  \Sigma_{n,d}({r,c,m})  } S^{\Bbbk}_{n,d}),$$ 
We have a functor $f_{r,c,m}: 
S^{\Bbbk}_{n,d}\mhyphen {\rm mod} \to 
S^\Bbbk ( \Sigma_{n,d}({r,c,m})    ) \mhyphen {\rm mod}$ given by 
$$f_{r,c,m}(M)=  M/\langle1_{\Lambda_{n,d} \setminus  \Sigma_{n,d}({r,c,m})  } M\rangle .$$

\begin{prop}\label{4.1}
The algebra $S^\Bbbk( \Sigma_{n,d}({r,c,m}) ) $ is a quasi-hereditary 
algebra with identity $1_{\Sigma_{n,d}(r,c,m)}$.  The algebra is  free as a $\ZZ$-module with cellular basis
 $$
 \{ \xi_{\SSTS\SSTT}  \mid  \SSTS\in\SStd(\lambda,\mu), \SSTT\in \SStd(\lambda,\nu) \text{ for }\lambda \in   \Sigma^+_{n,d}({r,c,m})       , \mu,\nu \in \Sigma_{n,d}({r,c,m})   \}.
 $$
A full set of    non-isomorphic  simple, standard, and injective $S^\Bbbk( \Sigma_{n,d}({r,c,m})) $-modules are given by
 $$
f_{r,c,m}(      L(\lambda) )
\quad
f_{r,c,m}     ( \Delta(\lambda) )
\quad
 f_{r,c,m}  (   I(\lambda))
 $$
respectively, for $\lambda \in    \Sigma^+_{n,d}({r,c,m})   $.
 If $\lambda  \notin \Sigma^+_{n,d}({r,c,m})  $ we have that
$$ f_{r,c,m} (       L(\lambda))=0
\quad
f_{r,c,m} (    \Delta(\lambda)) =0
\quad
f_{r,c,m}    (   I(\lambda))=0.
 $$  
Moreover, we have that 
$$
[\Delta(\lambda):L(\mu)]_{S^{\Bbbk}_{n,d}} =[f_{r,c,m}(      \Delta(\lambda)):f_{r,c,m}      (L(\mu))
]_{S^\Bbbk( \Sigma_{n,d}({r,c,m}) )}.
$$ 
Given $M, N \in S^\Bbbk_{n,d}\mhyphen {\rm mod}$ belonging to $ \Sigma^+_{n,d}(r,c,m)$, we have that
 \[
{ \rm Ext}^j_{S^{\Bbbk}_{n,d}}(M,N)\cong
{ \rm Ext}^j_{S^\Bbbk( \Sigma_{n,d}({r,c,m}) )} (f_{r,c,m}(M),f_{r,c,m}    (N)).  
 \]
\end{prop}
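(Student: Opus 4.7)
The plan is to invoke the standard theory of quasi-hereditary quotients at saturated subsets of the weight poset (see \cite[Appendix]{Donkin}), applied to the cellular structure of $S^\Bbbk_{n,d}$ recorded in \cref{sdajfsadjksfdahjksadfhjkafsdhjklsadfhlj}. The set $\Sigma^+_{n,d}(r,c,m) = \{\mu \in \Lambda^+_{n,d} : \mu \trianglelefteq \sigma\}$ is manifestly a saturated (downward closed) subset of $\Lambda^+_{n,d}$, and $\Sigma_{n,d}(r,c,m)$ is exactly the composition-level closure compatible with this truncation, so the proposition should reduce to identifying the cellular basis of the quotient and then reading off the rest.

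The first and main technical step is to identify the cellular basis of $S^\Bbbk_{n,d}/J$, where $J := S^\Bbbk_{n,d} 1_{\Lambda_{n,d} \setminus \Sigma_{n,d}(r,c,m)} S^\Bbbk_{n,d}$. Write $\mathcal{I}$ for the span of cellular basis elements $\xi_{\SSTS\SSTT}$ with $\lambda \notin \Sigma^+$. Using the factorisation $\xi_{\SSTS\SSTT} = \xi_{\SSTS\lambda} 1_\lambda \xi_{\lambda\SSTT}$ from \cref{basiselements} and the fact that $\lambda \notin \Sigma^+$ implies $1_\lambda$ is a summand of $1_{\Lambda \setminus \Sigma}$, one sees $\mathcal{I} \subseteq J$. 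For the converse, I would use the cellular multiplication rule from \cref{sdajfsadjksfdahjksadfhjkafsdhjklsadfhlj}: cellular products only propagate upward in the dominance order on cell labels, and since $\Sigma^+$ is saturated, any $\lambda \notin \Sigma^+$ can only produce terms at cells $\lambda' \trianglerighteq \lambda$ which are also outside $\Sigma^+$. Hence $\mathcal{I}$ is a two-sided ideal. Since any $\mu \notin \Sigma$ forces $\mu^+ \notin \Sigma^+$, and any $\SSTS \in \SStd(\lambda,\mu)$ forces $\lambda \trianglerighteq \mu^+ \notin \Sigma^+$ so $\lambda \notin \Sigma^+$ (again by saturation applied to the complement), the idempotents $1_\mu$ generating $J$ all lie in $\mathcal{I}$, giving $\mathcal{I} = J$. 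The complementary ``good'' basis elements — those with $\lambda \in \Sigma^+$, whose weights $\mu, \nu$ automatically lie in $\Sigma$ since $\mu^+, \nu^+ \trianglelefteq \lambda \trianglelefteq \sigma$ — therefore descend to the asserted cellular basis, and quasi-heredity of $S^\Bbbk(\Sigma_{n,d}(r,c,m))$ with cell poset $\Sigma^+$ is immediate.

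With the cellular basis of the quotient in hand, the identifications of $f_{r,c,m}(L(\lambda)), f_{r,c,m}(\Delta(\lambda)), f_{r,c,m}(I(\lambda))$ for $\lambda \in \Sigma^+$ with the corresponding canonical modules of $S^\Bbbk(\Sigma_{n,d}(r,c,m))$ follow from standard cellular/quasi-hereditary inheritance. For $\lambda \notin \Sigma^+$, both $\Delta(\lambda)$ and $L(\lambda)$ are generated by their highest-weight vector of weight $\lambda \notin \Sigma$, so $\Delta(\lambda) = S^\Bbbk_{n,d}\cdot 1_{\Lambda\setminus\Sigma} \Delta(\lambda)$ and likewise for $L(\lambda)$, giving $f_{r,c,m}(\Delta(\lambda)) = f_{r,c,m}(L(\lambda)) = 0$; the vanishing of $f_{r,c,m}(I(\lambda))$ follows by the contravariant duality on the cellular algebra (sending injective envelopes to projective covers, which are likewise generated by their $\lambda$-weight vector).

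Finally, preservation of decomposition numbers $[\Delta(\lambda):L(\mu)]$ for $\lambda \in \Sigma^+$ is immediate from saturation, since $\mu \trianglelefteq \lambda$ forces $\mu \in \Sigma^+$ and no composition factor is lost; preservation of higher $\mathrm{Ext}$-groups for modules belonging to $\Sigma^+$ is the standard consequence of saturated subquotient in a highest weight category, as developed in \cite[Appendix]{Donkin}. The main obstacle in the argument is the verification that $\mathcal{I} = J$ (not merely $\mathcal{I} \subseteq J$), where the crucial input is the coupling of the cellular upward-propagation property from \cref{sdajfsadjksfdahjksadfhjkafsdhjklsadfhlj} with the saturation of $\Sigma^+$ — this is what pins $J$ down exactly, and thence drives the rest of the standard inheritance.
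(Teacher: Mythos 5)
Your overall route is the same as the paper's: exploit the saturation of $\Sigma^+_{n,d}(r,c,m)$ in the dominance order and invoke the standard machinery of quasi-hereditary quotients from \cite[Appendix]{Donkin}. The paper's own proof is a one-line appeal to that machinery, so your detailed identification $J = \mathcal{I}$ (using the factorisation $\xi_{\SSTS\SSTT} = \xi_{\SSTS\lambda}1_\lambda\xi_{\lambda\SSTT}$ for one inclusion and the upward-propagation property of the cellular multiplication for the other) is a genuinely useful expansion of what the authors leave implicit, and it is the correct mechanism.

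Two caveats, however. First, your step ``$\mu \notin \Sigma$ forces $\mu^+ \notin \Sigma^+$'' is only valid if one reads $\Sigma_{n,d}(r,c,m)$ as the set of \emph{all} compositions $\mu$ with $\mu^+ \trianglelefteq \sigma$. The paper's literal definition (compositions obtainable from a partition in $\Sigma^+$ by permuting within the two blocks $\{1,\dots,r\}$ and $\{r+1,\dots,d\}$) is strictly smaller: one can have $\mu^+ \trianglelefteq \sigma$ while $\mu$ is not block-sorted. You should state explicitly that you are taking $\Sigma_{n,d}$ to be the downward closure of $\sigma$ among compositions; this is surely what is meant (otherwise $\Sigma_{n,d}$ is not even saturated in $\Lambda_{n,d}$), but the inference you rely on genuinely needs it.

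Second, and more substantively, the duality argument you offer for $f_{r,c,m}(I(\lambda)) = 0$ when $\lambda \notin \Sigma^+$ does not work. The contravariant duality $\circ$ does not commute with the quotient functor $f = q$, $q(M) = M/JM$; rather, since $J^* = J$, one has $q(M)^\circ \cong p(M^\circ)$ where $p(N) = \{n \in N : Jn = 0\}$ is the \emph{right} adjoint to the inclusion. So $q(I(\lambda))^\circ \cong p(P(\lambda))$, and $p(P(\lambda))$ is the largest submodule of $P(\lambda)$ belonging to $\Sigma^+$, which is certainly not zero in general for $\lambda \notin \Sigma^+$ (e.g.\ $P(\lambda) = \Delta(\lambda)$ with a composition factor $L(\mu)$, $\mu \in \Sigma^+$, at the bottom). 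The functor that does vanish on $I(\lambda)$ for $\lambda \notin \Sigma^+$, and that produces the indecomposable injectives of the quotient for $\lambda \in \Sigma^+$, is $p$ rather than $q$: $p(I(\lambda)) = 0$ immediately because every nonzero submodule of $I(\lambda)$ contains the socle $L(\lambda)$, and $p(I(\lambda))$ is injective in $S^\Bbbk(\Sigma)\mhyphen{\rm mod}$ by adjointness. (It is $q$ that behaves this way on projectives $P(\lambda)$.) The paper's statement shares this issue, so you are not alone, but if you wish to prove what is written you would need an argument specific to the $S^\Bbbk_{n,d}$ setting showing $q(I(\lambda)) = p(I(\lambda))$, or else replace $f_{r,c,m}$ by the right-adjoint truncation in the injective part of the claim.
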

\begin{proof}The set $  \Sigma_{n,d}({r,c,m}) $ is {\sf saturated} in the dominance ordering on partitions.
%
All the results now follow  from standard facts about   quotient functors \cite{Donkin}.
\end{proof}

\subsection{Subalgebras of Schur algebras}\label{ACT3}
Given $r,c,m\in \mathbb N$, we
define  the idempotent 
$$
 1_{   \Gamma^+_{n,d}({r,c,m})    }
=
 \sum_{ \mu   \in  \Gamma^+_{n,d}({r,c,m})  }1_\mu
$$
and associated idempotent subalgebras   as follows, 
$$S^\Bbbk_{n,d}(\Gamma^+_{n,d}({r,c,m}) ) := 1_{   \Gamma^+_{n,d}({r,c,m})    }  S^\Bbbk_{n,d} 1_{   \Gamma^+_{n,d}({r,c,m})    }.$$ 
We have a functor  
 $g_{r,c,m} :S^{\Bbbk}_{n,d}\mhyphen {\rm mod} \rightarrow S^\Bbbk_{n,d}(\Gamma^+_{n,d}({r,c,m}) )\mhyphen {\rm mod}$ given by  $$g_{r,c,m}(M)= 1_{ \Gamma^+_{n,d}({r,c,m}) }  M.$$  

\begin{prop}\label{4.2}
The algebra $S^\Bbbk (\Gamma^+_{n,d}({r,c,m}) )$ is a quasi-hereditary 
algebra with identity $1_{   \Gamma^+_{n,d}({r,c,m})    }$.
The algebra  is free as a $\ZZ$-module with cellular basis
 $$
 \{ \xi_{\SSTS\SSTT}  \mid  \SSTS\in\SStd(\lambda,\mu), \SSTT\in \SStd(\lambda,\nu) \text{ for }\lambda , \mu,\nu \in\Gamma^+_{n,d}({r,c,m})  \}.
 $$
A full set of    non-isomorphic  simple, standard, and injective $S^\Bbbk (\Gamma^+_{n,d}({r,c,m}) ) $-modules are given by
 $$
g_{r,c,m}(      L(\lambda) )
\quad
g_{r,c,m}      (\Delta(\lambda) )
\quad
 g_{r,c,m}     (I(\lambda))
 $$
respectively, for $\lambda \in    \Gamma^+_{n,d}({r,c,m})   $.
   We have that 
$$
[\Delta(\lambda):L(\mu)]_{S^{\Bbbk}_{n,d}} =[g_{r,c,m}(      \Delta(\lambda)):g_{r,c,m}      (L(\mu))
]_{S^\Bbbk( \Gamma^+_{n,d}({r,c,m}) )}.  
$$ 
Let $N, M \in S^\Bbbk_{n,d}\mhyphen {\rm mod}$  and suppose that $M\in \mathcal{F}_{\Gamma^+_{n,d}}(\Delta)$.  We have that
 \[
{ \rm Ext}^j_{S^{\Bbbk}_{n,d}}(M,N)\cong
{ \rm Ext}^j_{S^\Bbbk( \Gamma^+_{n,d}({r,c,m}) )} (g_{r,c,m}(M),g_{r,c,m}    (N)).  
 \]
 \end{prop}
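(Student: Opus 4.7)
The plan is to mirror the structure of \cref{4.1}, now invoking the standard theory of idempotent subalgebras (rather than quotient algebras) of quasi-hereditary cellular algebras from \cite[Appendix]{Donkin}. The central combinatorial input is that $\Gamma^+_{n,d}(r,c,m)$ is \emph{co-saturated} in $\Lambda^+_{n,d}$ under dominance: if $\mu \trianglerighteq \gamma$ and $\nu \trianglerighteq \mu$, then by transitivity $\nu \trianglerighteq \gamma$, so $\nu \in \Gamma^+_{n,d}(r,c,m)$. This places us in the exact setting in which the standard idempotent-truncation machinery applies.

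For the cellular basis, I would first observe that $e_\Gamma := 1_{\Gamma^+_{n,d}(r,c,m)}$ is idempotent and that $e_\Gamma \xi_{\SSTS\SSTT} e_\Gamma = \xi_{\SSTS\SSTT}$ if the weights of $\SSTS$ and $\SSTT$ both lie in $\Gamma^+_{n,d}(r,c,m)$, and is $0$ otherwise. Non-emptiness of $\SStd(\lambda,\mu)$ forces $\mu \trianglelefteq \lambda$, so the condition $\mu \in \Gamma^+_{n,d}(r,c,m)$ automatically yields $\lambda \in \Gamma^+_{n,d}(r,c,m)$ by transitivity. Restricting the co-determinant basis of \cref{sdajfsadjksfdahjksadfhjkafsdhjklsadfhlj} accordingly produces the claimed cellular basis of $e_\Gamma S^\Bbbk_{n,d} e_\Gamma$. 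The quasi-hereditary structure, the classification of simple and standard modules as $g_{r,c,m}(L(\lambda))$ and $g_{r,c,m}(\Delta(\lambda))$ for $\lambda \in \Gamma^+_{n,d}(r,c,m)$, and the preservation of decomposition numbers then follow from \cite{GL} combined with \cite[Appendix]{Donkin}. The identification $g_{r,c,m}(L(\lambda)) \neq 0 \Leftrightarrow \lambda \in \Gamma^+_{n,d}(r,c,m)$ uses the same transitivity step: every partition weight $\mu$ of $L(\lambda)$ satisfies $\mu \trianglelefteq \lambda$, so $\mu \in \Gamma^+_{n,d}(r,c,m)$ forces $\lambda \in \Gamma^+_{n,d}(r,c,m)$. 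The injective assertion is then handled by the $\ast$-duality of $S^\Bbbk_{n,d}$ from \cref{sdajfsadjksfdahjksadfhjkafsdhjklsadfhlj}, which swaps projectives with injectives and fixes $e_\Gamma$, reducing it to the projective statement.

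The main obstacle is the $\operatorname{Ext}$ comparison. For $\lambda \in \Gamma^+_{n,d}(r,c,m)$, the projective cover $P(\lambda)$ has a $\Delta$-filtration whose factors $\Delta(\mu)$ satisfy $\mu \trianglerighteq \lambda$, hence $\mu \in \Gamma^+_{n,d}(r,c,m)$ by co-saturation, so $P(\lambda) \in \mathcal{F}_{\Gamma^+_{n,d}(r,c,m)}(\Delta)$. Thus any $M \in \mathcal{F}_{\Gamma^+_{n,d}(r,c,m)}(\Delta)$ admits a projective resolution $P^\bullet \to M$ whose terms remain in $\mathcal{F}_{\Gamma^+_{n,d}(r,c,m)}(\Delta)$, and exactness of $g_{r,c,m}$ converts this into a projective resolution $g_{r,c,m}(P^\bullet) \to g_{r,c,m}(M)$ in $e_\Gamma S^\Bbbk_{n,d} e_\Gamma\mhyphen{\rm mod}$. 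The key technical step is the Hom identification
\[
\Hom_{S^\Bbbk_{n,d}}(P(\lambda), N) \cong \Hom_{e_\Gamma S^\Bbbk_{n,d} e_\Gamma}(g_{r,c,m}(P(\lambda)), g_{r,c,m}(N))
\]
for $\lambda \in \Gamma^+_{n,d}(r,c,m)$, which follows from the adjunction $\Hom_{S^\Bbbk_{n,d}}(S^\Bbbk_{n,d} e_\Gamma, -) \cong e_\Gamma(-)$ together with the fact that $P(\lambda)$ is a direct summand of $S^\Bbbk_{n,d} 1_\lambda \subseteq S^\Bbbk_{n,d} e_\Gamma$. Passing to cohomology on the two resulting isomorphic $\Hom$-complexes then delivers the claimed $\operatorname{Ext}$ isomorphism.
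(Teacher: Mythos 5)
Your proposal is correct and follows the same route as the paper: the authors simply note that $\Gamma^+_{n,d}(r,c,m)$ is co-saturated and invoke the standard theory of idempotent truncation functors for quasi-hereditary algebras, which is exactly the machinery you unpack (restricting the co-determinant basis, using $*$-duality for injectives, and comparing $\operatorname{Ext}$ via a projective resolution of $M$ built from $P(\lambda)$ with $\lambda \in \Gamma^+_{n,d}(r,c,m)$). Your filled-in details — in particular the observation that $\mu\trianglelefteq\lambda$ for $\SStd(\lambda,\mu)\neq\emptyset$ and BGG reciprocity forcing the $\Delta$-factors of $P(\lambda)$ to stay in the co-saturated set — are precisely the ``standard facts'' the paper leaves implicit.
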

 
 \begin{proof}
The set $ \Gamma^+_{n,d}({r,c,m}) $ is {\sf co-saturated} in the dominance ordering on partitions.  All the results now follow  from standard facts about the idempotent truncation functors \cite{Donkin}.
 \end{proof}

\subsection{Subquotient algebras of Schur algebras}\label{ACT4}
Given $r,c,m \in \mathbb{N}$, we define the idempotent
 $$1_{r,c,m}^{n,d}= \sum_{\mu \in \Lambda^+_{n,d}(r,c,m)}1_\mu,$$
and associated   subquotient algebra
$$S^\Bbbk ( \Lambda_{n,d}^+(r,c,m)) 
:= 1_{   \Gamma^+_{n,d}({r,c,m})    }  (S^\Bbbk_{n,d} / S^\Bbbk_{n,d}1_{  \Lambda_{n,d} \setminus \Sigma_{n,d}({r,c,m})  	 }S^\Bbbk_{n,d}	) 1_{   \Gamma^+_{n,d}({r,c,m})    }.$$ 
We have a functor 
 $h_{r,c,m} :S^{\Bbbk}_{n,d}\mhyphen {\rm mod} \rightarrow 
 S^\Bbbk_{n,d}( \Lambda_{n,d}^+(r,c,m) )\mhyphen {\rm mod}$ given by $$h_{r,c,m}(M)= 1_{\Gamma^+_{n,d}({r,c,m})}  (M /
 \langle 1_{  \Lambda_{n,d} \setminus \Sigma_{n,d}({r,c,m})  	 }M \rangle ) .$$  
\begin{prop}\label{4.2}
The algebra $S^\Bbbk_{n,d}(\Lambda_{n,d}^+(r,c,m) )$ is a
  quasi-hereditary algebra with identity $1_{r,c,m}^{n,d}$. The algebra is free as a $\ZZ$-module with cellular basis
 $$
 \{ \xi_{\SSTS\SSTT}  \mid  \SSTS\in\SStd(\lambda,\mu), \SSTT\in \SStd(\lambda,\nu) \text{ for }\lambda  , \mu,\nu \in\Lambda_{n,d}^+(r,c,m)   \}.
 $$
A full set of    non-isomorphic  simple, standard, and injective $S^\Bbbk_{n,d}(\Lambda^+_{n,d}({r,c,m}) ) $-modules are given by
 $$
h_{r,c,m} (     L(\lambda) )
\quad
h_{r,c,m}    (  \Delta(\lambda) )
\quad
 h_{r,c,m}    ( I(\lambda))
 $$
respectively, for $\lambda \in    \Lambda^+_{n,d}({r,c,m})  $.
   We have that 
$$
[\Delta(\lambda):L(\mu)]_{S^{\Bbbk}_{n,d}} =[h_{r,c,m}   (   \Delta(\lambda)):h_{r,c,m}(      L(\mu))
]_{S^\Bbbk ( \Lambda_{n,d}^+(r,c,m) )}.
$$ 
Let $ M, N  \in S^\Bbbk_{n,d}\mhyphen {\rm mod}$   belonging  to $\Sigma^+_{n,d}(r,c,m)$ such further suppose that $M\in \mathcal{F}_{\Gamma^+_{n,d}}(\Delta)$.  We have that
 \[
{ \rm Ext}^j_{S^{\Bbbk}_{n,d}}(M,N)\cong
{ \rm Ext}^j_{S^\Bbbk( \Lambda^+_{n,d}({r,c,m}) )} (h_{r,c,m}(M),h_{r,c,m}    (N)).  
 \]

\end{prop}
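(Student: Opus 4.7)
The plan is to recognise that the functor $h_{r,c,m}$ is essentially the composition $g_{r,c,m} \circ f_{r,c,m}$ via the identity $\Lambda^+_{n,d}(r,c,m) = \Sigma^+_{n,d}(r,c,m) \cap \Gamma^+_{n,d}(r,c,m)$ established earlier. Once this factorisation is installed, every clause of the proposition follows by stacking the corresponding clause of \cref{4.1} on top of the earlier \cref{4.2}; the main work is organising the bookkeeping and verifying that hypotheses propagate through the composition.

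First I would check that $f_{r,c,m}(M)$ is naturally a module for the quasi-hereditary quotient $S^\Bbbk(\Sigma_{n,d}(r,c,m))$, and then observe that the image of the idempotent $1_{\Gamma^+_{n,d}(r,c,m)}$ in this quotient coincides with $1^{n,d}_{r,c,m}$, because $1_\mu$ is killed for $\mu \notin \Sigma_{n,d}(r,c,m)$ and only the terms with $\mu \in \Gamma^+_{n,d}(r,c,m) \cap \Sigma^+_{n,d}(r,c,m) = \Lambda^+_{n,d}(r,c,m)$ survive. This immediately identifies $S^\Bbbk_{n,d}(\Lambda^+_{n,d}(r,c,m))$ with the idempotent subalgebra $1^{n,d}_{r,c,m} S^\Bbbk(\Sigma_{n,d}(r,c,m)) 1^{n,d}_{r,c,m}$ and realises $h_{r,c,m}$ as $g_{r,c,m} \circ f_{r,c,m}$ on the module category.

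Having set this up, the cellular basis is read off by combining the two restrictions: \cref{4.1} tells us the quotient has a cellular basis with shapes $\lambda \in \Sigma^+_{n,d}(r,c,m)$, and truncating by $1^{n,d}_{r,c,m}$ on both sides restricts the weights $\mu,\nu$ to $\Lambda^+_{n,d}(r,c,m)$; the semistandardness relation $\lambda \trianglerighteq \mu$ then forces $\lambda \in \Gamma^+_{n,d}(r,c,m)$ as well, giving exactly the claimed index set $\Lambda^+_{n,d}(r,c,m)$ for $\lambda,\mu,\nu$. Quasi-heredity, the classification of $L(\lambda)$, $\Delta(\lambda)$ and $I(\lambda)$ via $h_{r,c,m}$, and the invariance of decomposition numbers then all drop out by applying the corresponding clauses of \cref{4.1} and \cref{4.2} in sequence.

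The main obstacle is the Ext isomorphism, since here one must track two hypotheses through the composition. The plan is to chain the two Ext isomorphisms: apply the one from \cref{4.1}, whose input requires $M$ and $N$ to belong to $\Sigma^+_{n,d}(r,c,m)$, and then the one from the earlier \cref{4.2}, whose input requires a $\Delta$-filtration with factors in $\Gamma^+_{n,d}(r,c,m)$. The delicate verification is that the condition $M \in \mathcal{F}_{\Gamma^+_{n,d}}(\Delta)$ is preserved by $f_{r,c,m}$: this holds because $f_{r,c,m}$ sends $\Delta(\lambda)$ either to a standard module of the quotient (when $\lambda \in \Sigma^+_{n,d}(r,c,m)$) or to zero, so $f_{r,c,m}(M)$ inherits a $\Delta$-filtration with factors labelled by $\Sigma^+_{n,d}(r,c,m) \cap \Gamma^+_{n,d}(r,c,m) = \Lambda^+_{n,d}(r,c,m)$, which is precisely what is needed to feed into the second Ext isomorphism and conclude.
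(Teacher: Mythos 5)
Your proposal is correct and takes essentially the same route as the paper: the paper's own proof is the one-line observation that $\Lambda^+_{n,d}(r,c,m) = \Sigma^+_{n,d}(r,c,m)\cap\Gamma^+_{n,d}(r,c,m)$ so the statement ``follows by composition of the arguments above,'' and you have simply unpacked that composition. Your extra care in the Ext step---noting that $M$ belonging to $\Sigma^+_{n,d}(r,c,m)$ forces the $\Delta$-factors of the $\Gamma^+$-filtration to lie in the intersection $\Lambda^+_{n,d}(r,c,m)$, and that the quotient functor preserves the filtration on such modules---is exactly the bookkeeping the paper leaves implicit.
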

 
 \begin{proof}
We have that $\Lambda^+_{n,d}(r,c,m) = \Gamma^+_{n,d}(r,c,m)\cap \Sigma^+_{n,d}(r,c,m)$ and therefore the statement follows by composition of the arguments above. \end{proof}

\subsection{Isomorphisms between subquotients of Schur algebras  } \label{ACT5}

We now construct the isomorphism  between the subquotient algebras in which we are interested.  We first extend the combinatorics of cuts to semistandard tableaux.

\begin{defn} Given 
 $\lambda,\mu \in \Lambda^+_{n,d}(r,c,m)$, we have a bijective map 
   from  $   \SStd(\lambda,\mu)$ to $\SStd (\lambda^T, \mu^T) \times  \SStd  (\lambda^B, \mu^B)$  
 given by $\SSTS \mapsto  \SSTS^T \times \SSTS^B$, where
 \begin{itemize}
\item  $ \SSTS^T$ is obtained from  $\SSTS$ by deleting the $(r+1)$th, $(r+2)$th, \dots rows;
\item  $ \SSTS^B$ is obtained from  $\SSTS$ by deleting the first $r$ rows and 
replacing each entry $i$ with the entry   entry $i-r$.  
\end{itemize}
 \end{defn}

 \begin{eg}\label{semi}
The pair $( (7,5,3^2,2,1), (5,5,5,2,2,2))$ admit a horizontal cut after the 3rd row.    Given $\SSTS\in\SStd(\lambda,\mu)$  the leftmost tableau depicted in \cref{semifig}, we have that $ \SSTS^T  \in\SStd (\lambda^T, \mu^T)$ 
   and 
   $ \SSTS^B \in\SStd (\lambda^B, \mu^B)$  is as depicted in  \cref{semifig}.   
 \begin{figure}[ht!]
 $$
\Yvcentermath1 \young(1111123,22223,333,445,56,6)
 \quad \quad
\Yvcentermath1 \young(1111123,22223,333)
 \quad \quad
 \young(112,23,3)
 $$
 \caption{The tableau $\SSTS$ and the corresponding  tableaux  $ \SSTS^T $ and $ \SSTS^B$ for $r=3$, from \cref{semi}.}
\label{semifig}
 \end{figure}
 
 \end{eg}

 \begin{thm}\label{4.3}
The map  $$\varphi :   S^\Bbbk (\Lambda^+_{n,d}({r,c,m}) ) \to 
%
S^\Bbbk (\Lambda^+_{m,r}(r,c,m)
)
\times 
S^\Bbbk ( \Lambda^+_{n-m,d-r}(0,c,0))$$ given by 
\begin{align*} 
\varphi(\xi_{\SSTS\SSTT})= \xi_{ \SSTS^T  \SSTT^T } \times \xi_{ \SSTS^B  \SSTT^B }  
\end{align*}
is an isomorphism of $\Bbbk$-algebras.  
 \end{thm}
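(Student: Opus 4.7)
The plan has three steps: show $\varphi$ matches the cellular bases (and is thus a $\Bbbk$-module isomorphism); show each basis element $\xi_{\SSTS\SSTT}$ on the left-hand side splits as a commuting product of a top and a bottom factor in $S^\Bbbk_{n,d}$; and use this to verify that multiplication is preserved.

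For the bijection of bases, given $\lambda, \mu \in \Lambda^+_{n,d}(r,c,m)$ and $\SSTS \in \SStd(\lambda,\mu)$, the equality $|\mu^T| = m = |\lambda^T|$ combined with column-strictness forces every entry of $\SSTS$ that is at most $r$ to fill exactly the first $r$ rows, and conversely these rows contain only entries in $\{1,\ldots,r\}$. Hence the cut map $\SSTS \mapsto (\SSTS^T, \SSTS^B)$ is a bijection from $\SStd(\lambda,\mu)$ to $\SStd(\lambda^T,\mu^T) \times \SStd(\lambda^B,\mu^B)$, which combined with the bijection $\lambda \leftrightarrow (\lambda^T,\lambda^B)$ of \cref{sfadjfasdhjkdsfaj} induces a bijection of the cellular bases of the two sides.

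For the factorisation of basis elements, the support restriction above implies $\SSTS(i,j)$ of \cref{basiscombinatorics} vanishes whenever $i \le r < j$ or $j \le r < i$, so the product expression for $\xi_{\SSTS\lambda}$ in \cref{basiselements} factors as a top piece involving only $f_{i,j}$ with $i,j \le r$ and a bottom piece involving only $f_{i,j}$ with $i,j > r$. Since each $f_{i,j}$ is a polynomial in the Chevalley generators $f_{k,k+1}$ for $i \le k < j$, the top piece lives in the subalgebra generated by $f_{1,2},\ldots,f_{r-1,r}$ and the bottom piece in that generated by $f_{r+1,r+2},\ldots,f_{d-1,d}$; these two sets (and the analogous sets of $e$'s) are separated by at least two indices, hence commute pairwise via the Serre relations together with relation (R2), all of which hold in $S^\Bbbk_{n,d}$ by the remark following \cref{gensandrelns}. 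Rearranging therefore writes $\xi_{\SSTS\SSTT}$ as a commuting product of a top factor and a bottom factor, which under the natural identification of the two subalgebras they generate with the corresponding subalgebras of $S^\Bbbk_{m,r}$ and $S^\Bbbk_{n-m,d-r}$ coincide with the two coordinates of $\varphi(\xi_{\SSTS\SSTT}) = \xi_{\SSTS^T\SSTT^T} \times \xi_{\SSTS^B\SSTT^B}$.

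Multiplicativity now follows: for two basis elements, commuting top past bottom inside $S^\Bbbk_{n,d}$ gives $\xi_{\SSTS\SSTT}\xi_{\SSTU\SSTV} = (\xi_{\SSTS^T\SSTT^T}\xi_{\SSTU^T\SSTV^T})\cdot(\xi_{\SSTS^B\SSTT^B}\xi_{\SSTU^B\SSTV^B})$ in the subquotient $S^\Bbbk(\Lambda^+_{n,d}(r,c,m))$, whose image under $\varphi$ is precisely $\varphi(\xi_{\SSTS\SSTT})\,\varphi(\xi_{\SSTU\SSTV})$. The main obstacle is the handling of higher terms: the cellular multiplication of \cref{sdajfsadjksfdahjksadfhjkafsdhjklsadfhlj} produces error terms in $(S^\Bbbk_{n,d})^{\vartriangleright\lambda}$, and one must verify that modulo the cellular ideal of the subquotient these correspond exactly to the sum of the higher-term ideals on the two factors of the right-hand side. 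This compatibility is precisely the \emph{if and only if} clause of \cref{sfadjfasdhjkdsfaj}: for $\mu, \lambda \in \Lambda^+_{n,d}(r,c,m)$ we have $\mu \vartriangleright \lambda$ if and only if $\mu^T \vartriangleright \lambda^T$ or $\mu^B \vartriangleright \lambda^B$ (with the other factor weakly dominant), so the higher-dominance ideal on the left pulls back to the sum of the two higher-dominance ideals on the right, completing the proof that $\varphi$ is an algebra isomorphism.
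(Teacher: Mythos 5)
Your proof follows essentially the same route as the paper: factor each codeterminant basis element $\xi_{\SSTS\SSTT}$ into ``top'' and ``bottom'' pieces by observing that $\SSTS(i,j)$ vanishes across the cut, identify those pieces with basis elements of the smaller Schur algebras, and check the compatibility of the shape/dominance combinatorics via the bijection of \cref{sfadjfasdhjkdsfaj}. The paper then records what $\varphi$ does to each generator ($1_\lambda$, $e^{[m]}_{i,i+1}$, $f^{[m]}_{i,i+1}$) and leaves multiplicativity as the routine check that this generator map respects the defining relations (R1)--(R6). Your variant --- verifying multiplicativity directly by commuting the disjoint Chevalley subalgebras past one another --- is in the same spirit, but the claim that $\xi_{\SSTS\SSTT}$ is literally a commuting product of a top factor and a bottom factor glosses over the idempotent $1_\lambda$ sitting between $\xi_{\SSTS\lambda}$ and $\xi_{\lambda\SSTT}$: this idempotent lies in neither Chevalley subalgebra, and it does not split inside $S^{\Bbbk}_{n,d}$ as a product of a ``top'' idempotent with a ``bottom'' one (those live in different algebras), so the rearrangement is not a pure Serre-relation computation. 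The paper sidesteps this by explicitly sending $1_\lambda\mapsto 1_{\lambda^T}\times 1_{\lambda^B}$ as part of the generator map, after which the relation-checking handles the idempotents correctly. Finally, your closing paragraph about matching higher-term cellular ideals is not needed for the argument: once $\varphi$ is an algebra homomorphism on generators and bijective on the cellular basis, the compatibility of the cellular ideals (which does hold, by \cref{sfadjfasdhjkdsfaj}) is a consequence rather than an input to multiplicativity.
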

 \begin{proof}
 If $\SSTT \in \SStd(\lambda,\mu)$ and  $\SSTT(i,j)\neq0$, then this implies $i,j\in\{1,\ldots ,r\}$ or $i,j\in\{r+1,\ldots ,d\}$.  Therefore we can factorise the elements $\xi_{\lambda\SSTT} $ as follows, 
\begin{align*}
\xi_{\lambda\SSTT} &= \prod^{d}_ { i =1} \left(\prod_{j=1}^{d} e_{i,j}^{[\SSTT(i,j)]}\right)   = 
   \prod^{d}_ { i =r+1} \left(\prod_{j=r+1}^{d} e_{i,j}^{[\SSTT(i,j)]}\right)
\times    \prod^{r}_ { i =1} \left(\prod_{j=1}^{r} e_{i,j}^{[\SSTT(i,j)]}\right) 
\\ 
&=   \xi_{\lambda\SSTT^B} \times  \xi_{\lambda\SSTT^T}
\end{align*}
and similarly for the elements $\xi_{\SSTS\lambda}$.    
 Therefore, the map $\varphi$ 
    can  be seen to be given by taking the products of generators on the left-hand side to those of the right-hand side as follows:
 \begin{align*}
\varphi( 1_{\lambda }) &=
\begin{cases}
  1_{\lambda^T} \times 1_{\lambda^B} & \text{if }\lambda \in \Lambda_{n,d} (r,c,m)  \\
  0 & \text{otherwise} \\
  \end{cases}
\\
 \varphi(e^{[m]}_{i,i+1}) &=
\begin{cases}
 e^{[m]}_{i,i+1}\times  1^{n-m,d-r}_{0,c,0}	 			& \text{if }1\leq i \leq r-1 \\
 1^{m,r}_{r,c,m} \times e^{[m]}_{i-r,i-r+1} 			& \text{if }r+1\leq i \leq d \\ 
 \end{cases}
\\
\varphi(f^{[m]}_{i,i+1}) &=
\begin{cases}
 f^{[m]}_{i,i+1} \times 1^{n-m,d-r}_{0,c,0}			& \text{if }1\leq i \leq r-1 \\
 1^{m,r}_{r,c,m}\times f^{[m]}_{i-r,i-r+1} 			& \text{if }r+1\leq i \leq d \\ 
 \end{cases}.
\end{align*}
and the result follows. 
  \end{proof}

We immediately obtain a new result concerning the extension groups between Weyl and simple modules.  

 \begin{cor}\label{resul3}
 If 
 $\lambda,\mu$ admit a horizontal cut after the $r$th row, then
  we have that
 \[
{ \rm Ext}^k_{S^{\Bbbk}_{n,d}}(\Delta(\lambda),L(\mu)) \cong \bigoplus_{i+j=k}{ \rm Ext}^i_{S^{\Bbbk}_{m,r}}( \Delta( { \lambda^T}), L( { \mu^T}))
\otimes
{ \rm Ext}^j_{S^{\Bbbk}_{n-m,d-r}}( \Delta( { \lambda^B}), L( { \mu^B})).  
 \]
 \end{cor}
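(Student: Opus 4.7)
The plan is to chain three ingredients: the Ext-preservation of the subquotient functor $h_{r,c,m}$ from \cref{4.2}, the explicit algebra isomorphism $\varphi$ of \cref{4.3}, and a Künneth formula for tensor products of finite-dimensional $\Bbbk$-algebras.

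First, one reduces to the case $\mu\trianglerighteq\lambda$, since both sides vanish otherwise. For the left-hand side this is the standard quasi-hereditary fact that ${\rm Ext}^{\ast}(\Delta(\lambda),L(\mu))=0$ unless $\mu\trianglerighteq\lambda$: indeed, since $(P(\lambda):\Delta(\nu))=[\nabla(\nu):L(\lambda)]$ is zero unless $\nu\trianglerighteq\lambda$, the minimal projective resolution of $\Delta(\lambda)$ uses only $P(\nu)$ with $\nu\trianglerighteq\lambda$. For the right-hand side, the computation after \cref{sfadjfasdhjkdsfaj} shows that under an admitted horizontal cut, $\mu\trianglerighteq\lambda$ is equivalent to the conjunction $\mu^T\trianglerighteq\lambda^T$ and $\mu^B\trianglerighteq\lambda^B$, so if $\mu\not\trianglerighteq\lambda$ then at least one tensor factor on the right already vanishes.

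Now assume $\mu\trianglerighteq\lambda$. Set $c=\mu_r$ and $m=|\lambda^T|=|\mu^T|$. The same short inequality argument as in \cref{4.0}, applied with the roles of $\lambda$ and $\mu$ interchanged, places both $\lambda$ and $\mu$ in $\Lambda^+_{n,d}(r,c,m)=\Sigma^+_{n,d}(r,c,m)\cap\Gamma^+_{n,d}(r,c,m)$. Since $\Delta(\lambda)$ trivially lies in $\mathcal{F}_{\Gamma^+_{n,d}(r,c,m)}(\Delta)$ and both $\Delta(\lambda)$ and $L(\mu)$ have composition factors belonging to $\Sigma^+_{n,d}(r,c,m)$, the Ext-preservation clause of \cref{4.2} yields
\[
{\rm Ext}^k_{S^\Bbbk_{n,d}}(\Delta(\lambda),L(\mu))\cong{\rm Ext}^k_{S^\Bbbk(\Lambda^+_{n,d}(r,c,m))}\!\bigl(h_{r,c,m}\Delta(\lambda),\,h_{r,c,m}L(\mu)\bigr).
\]
Transport along $\varphi$ from \cref{4.3} then identifies this Ext group with one over the tensor product $A\otimes B$, where $A=S^\Bbbk(\Lambda^+_{m,r}(r,c,m))$ and $B=S^\Bbbk(\Lambda^+_{n-m,d-r}(0,c,0))$. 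The cellular formula $\varphi(\xi_{\SSTS\SSTT})=\xi_{\SSTS^T\SSTT^T}\otimes\xi_{\SSTS^B\SSTT^B}$ immediately identifies $h_{r,c,m}\Delta(\lambda)$ with the outer tensor product $\Delta(\lambda^T)\boxtimes\Delta(\lambda^B)$. A parallel calculation with the same formula shows that the bilinear form of \cref{definition: cell module} factorises as $\langle\cdot,\cdot\rangle_\lambda=\langle\cdot,\cdot\rangle_{\lambda^T}\cdot\langle\cdot,\cdot\rangle_{\lambda^B}$ under $\varphi$, so its radical splits as a sum of two tensor pieces and therefore $h_{r,c,m}L(\mu)\cong L(\mu^T)\boxtimes L(\mu^B)$.

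With all four modules identified as outer tensor products, the Künneth formula for Ext over a tensor product of finite-dimensional $\Bbbk$-algebras (proved by noting that an external tensor product of projective resolutions resolves the outer tensor product) gives
\[
{\rm Ext}^k_{A\otimes B}\!\bigl(\Delta(\lambda^T)\boxtimes\Delta(\lambda^B),\,L(\mu^T)\boxtimes L(\mu^B)\bigr)\cong\bigoplus_{i+j=k}{\rm Ext}^i_A(\Delta(\lambda^T),L(\mu^T))\otimes{\rm Ext}^j_B(\Delta(\lambda^B),L(\mu^B)).
\]
Since $\lambda^T,\mu^T\in\Sigma^+_{m,r}(r,c,m)\cap\Gamma^+_{m,r}(r,c,m)$ by construction, and symmetrically for $\lambda^B,\mu^B$, a second application of \cref{4.2} — this time to the full Schur algebras $S^\Bbbk_{m,r}$ and $S^\Bbbk_{n-m,d-r}$ — replaces each factor by the corresponding Ext group over the full Schur algebra, closing the chain and yielding the claimed formula. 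The main delicate point is the simple-module identification $h_{r,c,m}L(\mu)\cong L(\mu^T)\boxtimes L(\mu^B)$: once the cellular bilinear form is shown to factorise multiplicatively under $\varphi$, the rest is standard tensor-product and subquotient bookkeeping.
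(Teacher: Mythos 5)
Your argument is correct and follows the same route as the paper, which simply declares the corollary immediate from Proposition 4.9 (\cref{4.0}) and Theorem 4.17 (\cref{4.3}): you have filled in the implicit steps — the reduction to $\mu\trianglerighteq\lambda$, the Ext-invariance of the subquotient truncation (\cref{4.2}), the identification of the images of $\Delta(\lambda)$ and $L(\mu)$ as outer tensor products under $\varphi$, and the Künneth formula — all of which the authors leave to the reader. The only cosmetic remark is that you do not really need to track the bilinear form to see $h_{r,c,m}L(\mu)\cong L(\mu^T)\boxtimes L(\mu^B)$: since $\varphi$ is an isomorphism of cellular algebras matching cell modules according to $\lambda\mapsto(\lambda^T,\lambda^B)$, it automatically matches their simple heads, which is a slightly more economical way to make the same point.
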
   
\begin{proof}
This is immediate  from \cref{4.0,4.3}
\end{proof}

We also obtain the following result of Donkin \cite{Donkin1}. 
 
 \begin{cor}\label{resul1}
 If $\lambda,\mu$ admit a horizontal cut after the $r$th row, then
$$[\Delta(\lambda): L(\mu)]=
[\Delta(\lambda^T): L(\mu^T)]
\times
[\Delta(\lambda^B): L(\mu^B)].
$$ 
 \end{cor}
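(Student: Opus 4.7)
The plan is to deduce this corollary as a direct consequence of the algebra isomorphism in \cref{4.3} together with the preservation of decomposition numbers under the subquotient functor $h_{r,c,m}$ given in \cref{4.2}.

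First, I would observe that if $[\Delta(\lambda):L(\mu)] \neq 0$ then necessarily $\lambda \trianglerighteq \mu$ (and similarly for the top/bottom pieces, where a nonzero composition multiplicity forces the corresponding dominance); if $\lambda \not\trianglerighteq \mu$ both sides of the claimed equality are zero and there is nothing to prove. So assume $\lambda \trianglerighteq \mu$. By \cref{4.0}, since $(\lambda,\mu)$ admits a horizontal cut after the $r$th row, both $\lambda$ and $\mu$ lie in $\Lambda^+_{n,d}(r,c,m)$ where $c=\lambda_r$ and $m=|\lambda^T|$, and moreover $\lambda^T \trianglerighteq \mu^T$ and $\lambda^B \trianglerighteq \mu^B$.

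Next, I would invoke \cref{4.2} to transport the computation to the subquotient algebra:
\[
[\Delta(\lambda):L(\mu)]_{S^\Bbbk_{n,d}} = [h_{r,c,m}(\Delta(\lambda)) : h_{r,c,m}(L(\mu))]_{S^\Bbbk(\Lambda^+_{n,d}(r,c,m))}.
\]
The key step is then to identify $h_{r,c,m}(\Delta(\lambda))$ and $h_{r,c,m}(L(\mu))$ under the isomorphism $\varphi$ of \cref{4.3}. The cellular basis of $h_{r,c,m}(\Delta(\lambda))$ is indexed by $\SSTS \in \SStd(\lambda,-)$ with weights in $\Lambda^+_{n,d}(r,c,m)$, and the bijection $\SSTS \mapsto \SSTS^T \times \SSTS^B$ between $\SStd(\lambda,\mu)$ and $\SStd(\lambda^T,\mu^T)\times\SStd(\lambda^B,\mu^B)$ matches the basis of $h_{r,c,m}(\Delta(\lambda))$ with the basis of $\Delta(\lambda^T) \boxtimes \Delta(\lambda^B)$; the multiplicative form of $\varphi$ on the generators $\xi_{\SSTS\SSTT} = \xi_{\SSTS\lambda}1_\lambda\xi_{\lambda\SSTT}$ (which factorises as $\xi_{\SSTS^T\lambda^T}1_{\lambda^T}\xi_{\lambda^T\SSTT^T} \times \xi_{\SSTS^B\lambda^B}1_{\lambda^B}\xi_{\lambda^B\SSTT^B}$, by the same factorisation argument used in the proof of \cref{4.3}) shows this identification is $S^\Bbbk_{n,d}$-equivariant. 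The same bijection at the level of cellular bases identifies the radical of the cellular form on $\Delta(\lambda)$ with the tensor product of the radicals on $\Delta(\lambda^T)$ and $\Delta(\lambda^B)$, so $h_{r,c,m}(L(\lambda))$ corresponds to $L(\lambda^T) \boxtimes L(\lambda^B)$.

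Granting these identifications, the corollary follows immediately from the elementary fact that composition multiplicities are multiplicative for tensor products of modules over a direct product of algebras:
\[
[M_1 \boxtimes M_2 : N_1 \boxtimes N_2]_{A_1 \times A_2} = [M_1:N_1]_{A_1}\cdot[M_2:N_2]_{A_2}.
\]
The main obstacle is the second step, namely checking carefully that the cellular/filtration-theoretic structure (Weyl modules and simple heads) transports cleanly through $\varphi$. Once this identification is in hand, combined with \cref{4.2} and the multiplicativity just quoted, the proof is essentially a one-line deduction.
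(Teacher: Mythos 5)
Your proposal is correct and takes essentially the same approach as the paper, whose own proof simply says ``This follows from \cref{4.0,4.3}''; you have filled in the implicit intermediate steps (using \cref{4.0} to place $\lambda,\mu$ in $\Lambda^+_{n,d}(r,c,m)$ and reduce to the dominance case, passing to the subquotient algebra via \cref{4.2}, applying the isomorphism $\varphi$ of \cref{4.3}, and concluding by multiplicativity of composition multiplicities over a product of algebras). One small imprecision: the radical of the cellular form on $\Delta(\lambda^T)\boxtimes\Delta(\lambda^B)$ is $\operatorname{rad}\boxtimes\Delta+\Delta\boxtimes\operatorname{rad}$ rather than $\operatorname{rad}\boxtimes\operatorname{rad}$, but the quotient is still $L(\lambda^T)\boxtimes L(\lambda^B)$, so your identification of $h_{r,c,m}(L(\mu))$ and hence the final conclusion are unaffected.
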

\begin{proof}
This follows from  \cref{4.0,4.3}
\end{proof}  

We also  recover the unquantised versions of   \cite{LM}  
and \cite[4.2(17)]{Donkin}.

 \begin{cor}\label{resul2}
 If 
 $\lambda,\mu$ admit a horizontal cut after the $r$th row, then
  we have that 
 \[
 { \rm Ext}^k_{S^{\Bbbk}_{n,d}}(\Delta(\lambda),\Delta(\mu)) \cong \bigoplus_{i+j=k}{ \rm Ext}^i_{S^{\Bbbk}_{m,r}}( \Delta( { \lambda^T}), \Delta( { \mu^T}))
\otimes
{ \rm Ext}^j_{S^{\Bbbk}_{n-m,d-r}}( \Delta( { \lambda^B}), \Delta( { \mu^B})).  
 \]
 \end{cor}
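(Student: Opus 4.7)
My plan parallels the proofs of \cref{resul3,resul1}. First I handle the case $\lambda \not\trianglelefteq \mu$ by showing that both sides vanish. The standard quasi-hereditary fact that $\text{Ext}^k_{S^{\Bbbk}_{n,d}}(\Delta(\lambda), L(\nu)) = 0$ unless $\nu \trianglerighteq \lambda$, combined with the filtration of $\Delta(\mu)$ by simples $L(\nu)$ with $\nu \trianglelefteq \mu$, yields $\text{Ext}^k_{S^{\Bbbk}_{n,d}}(\Delta(\lambda), \Delta(\mu)) = 0$ whenever $\lambda \not\trianglelefteq \mu$. The second assertion of \cref{4.0} then gives $\lambda^T \not\trianglelefteq \mu^T$ or $\lambda^B \not\trianglelefteq \mu^B$, and applying the same vanishing principle in $S^{\Bbbk}_{m,r}$ or $S^{\Bbbk}_{n-m,d-r}$ makes every summand on the right-hand side contain a vanishing tensor factor; so both sides are zero in this case.

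Assume henceforth $\lambda \trianglelefteq \mu$ and set $c := \mu_r$, $m := |\mu^T|$. Applying \cref{4.0} with the roles of $\lambda$ and $\mu$ exchanged places both $\lambda$ and $\mu$ in $\Lambda^+_{n,d}(r,c,m)$. Since $\Sigma^+_{n,d}(r,c,m)$ is saturated, $\Delta(\lambda)$ and $\Delta(\mu)$ belong to $\Sigma^+_{n,d}(r,c,m)$, and $\Delta(\lambda) \in \mathcal{F}_{\Gamma^+_{n,d}}(\Delta)$ trivially; the subquotient statement of \cref{4.2} therefore yields
\[
\text{Ext}^k_{S^{\Bbbk}_{n,d}}(\Delta(\lambda),\Delta(\mu)) \cong \text{Ext}^k_{S^{\Bbbk}(\Lambda^+_{n,d}(r,c,m))}\bigl(h_{r,c,m}(\Delta(\lambda)),\,h_{r,c,m}(\Delta(\mu))\bigr).
\]
I then transport along the algebra isomorphism $\varphi$ of \cref{4.3}: using the bijection $\SSTS \mapsto (\SSTS^T,\SSTS^B)$ between $\SStd(\lambda,-)$ and $\SStd(\lambda^T,-)\times\SStd(\lambda^B,-)$ on cellular bases, I identify $h_{r,c,m}(\Delta(\lambda))$ with the outer tensor product $\Delta(\lambda^T)\otimes\Delta(\lambda^B)$, and similarly for $\mu$.

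The result then follows from the Künneth formula for Ext over a tensor product of $\Bbbk$-algebras (valid as $\Bbbk$ is a field, so all modules are flat), together with one last application of \cref{4.2} to each factor $S^{\Bbbk}_{m,r}$ and $S^{\Bbbk}_{n-m,d-r}$ to re-express the resulting tensor factors as Ext groups in the corresponding smaller Schur algebras. The main subtlety in the plan is the identification $h_{r,c,m}(\Delta(\lambda)) \cong \Delta(\lambda^T)\otimes\Delta(\lambda^B)$ under $\varphi$; this is essentially formal from the cellular basis description in \cref{definition: cell module} together with the factorisation of the divided-power elements $\xi_{\SSTS\SSTT^\lambda} = \xi_{\SSTS\lambda}1_\lambda$ given in the proof of \cref{4.3}, but it is the step that needs to be spelled out most carefully. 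Otherwise the argument runs in parallel to the proof of \cref{resul3}, with Künneth accounting for the new direct-sum-over-$i+j=k$ structure.
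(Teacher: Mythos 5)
Your argument is correct and is simply a careful unpacking of what the paper dismisses as ``immediate from \cref{4.0,4.3}''; the approach is the same. You correctly identify the two hidden ingredients — the vanishing case $\lambda\not\trianglelefteq\mu$ (which the paper leaves implicit), and the compatibility of the subquotient functor $h_{r,c,m}$ and the isomorphism $\varphi$ with standard modules, so that $h_{r,c,m}(\Delta(\lambda))$ is carried to the outer tensor product $\Delta(\lambda^T)\otimes\Delta(\lambda^B)$ — and you correctly handle the choice $c=\mu_r$, $m=|\mu^T|$ so that, by \cref{4.0} applied with $\lambda$ and $\mu$ swapped, both weights land in $\Lambda^+_{n,d}(r,c,m)$. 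The one step you flag as most delicate is indeed where the content sits: it follows because $\varphi$ matches the cellular bases via $\SSTS\mapsto(\SSTS^T,\SSTS^B)$, sending the cell ideal filtration on one side to the product filtration on the other, so the cell module at $\lambda$ is carried to the cell module at $(\lambda^T,\lambda^B)$, which for a tensor product of cellular algebras over a field is the outer tensor product of cell modules; the Künneth formula then produces the direct sum over $i+j=k$.
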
   
\begin{proof}
This is immediate  from \cref{4.0,4.3}
\end{proof}  

\begin{rmk}[Removing a single row] 
We now consider the example of row cuts for $r=1$.  
In this case, the  isomorphisms above (and implications for decomposition numbers and extension groups) were proven in \cite{FangHenkeKoenig}. In this particularly simple case,  the  results can also be seen to  follow  by  tensoring with the determinant representation and applying a duality (as noted by Donkin in \cite[Appendix]{FangHenkeKoenig}). 
 \end{rmk}

%
%
%
%
%
%
\subsection{$p$-Kostka numbers}  \label{ACT6}
By \cref{4.1,4.2}, we know that injective, standard, and simple modules are all preserved under the functors  $h_{r,c,m}$ and the isomorphism $\varphi$.  It remains to check that the generalised symmetric powers are also preserved.
 
  \begin{thm}\label{resul3}
Given $\lambda,\mu \in \Lambda_{n,d}^+(r,c,m)$, 
we have that
$$ 
%
h_{r,c,m} ( {\Sym^\mu(\Bbbk^d)}) \cong
 h_{r,c,m}( \Sym^{ \mu^T}(\Bbbk^{r}))
  \otimes 
  h_{0,c,0}( \Sym^{  \mu^B}(\Bbbk^{d-r}))
  $$
and 
$$
h_{r,c,m} ( I(\lambda)  ) \cong
 h_{r,c,m}( I(\lambda^T ))
  \otimes 
  h_{0,c,0}(I(\lambda^B )
  $$
and so we conclude that the $p$-Kostka  numbers are preserved under generalised row cuts.  
 \end{thm}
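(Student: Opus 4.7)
The plan is to establish the first isomorphism at the level of the natural bases from \cref{Crispysabsis}, and then to deduce the statement for $I(\lambda)$ and the preservation of $p$-Kostka numbers by Krull--Schmidt.

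First, by \cref{Crispysabsis}, $\Sym^\mu(\Bbbk^d)$ has basis
$\{\rho_{\SSTS\SSTT} \mid \SSTS \in \SStd(\lambda,\nu),\ \SSTT \in \SStd(\lambda,\mu),\ \lambda \in \Lambda^+_{n,d},\ \nu \in \Lambda_{n,d}\}$.
After applying the subquotient functor $h_{r,c,m}$ defined in \cref{ACT4}, the basis elements that survive are precisely those with $\lambda \in \Lambda^+_{n,d}(r,c,m)$ (so that the cell module $\nabla(\lambda)$ is not quotiented out) and $\nu \in \Gamma^+_{n,d}(r,c,m)$ (the weights retained by the idempotent).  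Using the splittings $\SSTS \mapsto \SSTS^T \times \SSTS^B$ and $\SSTT \mapsto \SSTT^T \times \SSTT^B$, together with the bijection of \cref{sfadjfasdhjkdsfaj}, these surviving basis elements are placed in bijection with a basis of the external tensor product
$h_{r,c,m}(\Sym^{\mu^T}(\Bbbk^r)) \otimes h_{0,c,0}(\Sym^{\mu^B}(\Bbbk^{d-r}))$.

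Next, I would verify that the resulting $\Bbbk$-linear map $\rho_{\SSTS\SSTT} \mapsto \rho_{\SSTS^T\SSTT^T} \otimes \rho_{\SSTS^B\SSTT^B}$ intertwines the algebra action, transported via the algebra isomorphism $\varphi$ of \cref{4.3}.  Using the explicit description of $\varphi$ on generators given there and the formulas for the action of $e_{i,i+1}$, $f_{i,i+1}$ and $1_\nu$ on tensor space, this reduces to the observation that generators with $1\leq i < r$ only modify entries $\leq r$ in a semistandard tableau indexed by $\Lambda^+_{n,d}(r,c,m)$ (hence only touch the top half), while those with $r < i < d$ only touch the bottom half, and $1_\nu$ acts as $1_{\nu^T}\otimes 1_{\nu^B}$.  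The structural constraint $\lambda_r \geq c \geq \lambda_{r+1}$ is what guarantees that no entries cross the cut and no mixed contributions appear.

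With the symmetric power isomorphism in hand, the statement for $I(\lambda)$ follows from Krull--Schmidt.  By \cref{greenstuff}, $\Sym^\mu(\Bbbk^d) \cong \bigoplus_\lambda I(\lambda)^{\oplus K_{\mu\lambda}}$, and by the results of \cref{ACT4}, $h_{r,c,m}(I(\lambda))$ is an indecomposable injective of $S^\Bbbk_{n,d}(\Lambda^+_{n,d}(r,c,m))$ when $\lambda \in \Lambda^+_{n,d}(r,c,m)$ and zero otherwise; the analogous statement holds for the two factor algebras.  Matching indecomposable summands on each side of the symmetric power isomorphism yields the factorisation $h_{r,c,m}(I(\lambda)) \cong h_{r,c,m}(I(\lambda^T)) \otimes h_{0,c,0}(I(\lambda^B))$, and the $p$-Kostka identity $K_{\lambda\mu} = K_{\lambda^T\mu^T}\cdot K_{\lambda^B\mu^B}$ is then immediate by comparing multiplicities.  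The main obstacle is the equivariance check in the second step: one must carefully translate the description of $\varphi$ on Schur-algebra generators through the tensor space action to confirm no cross-terms arise.  Everything else is formal manipulation of bases and an appeal to Krull--Schmidt.
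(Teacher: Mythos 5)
Your proposal follows the paper's argument essentially step for step: the same tableau basis from \cref{Crispysabsis}, the same splitting $\SSTS\mapsto\SSTS^T\times\SSTS^B$, the same appeal to \cref{4.1,4.2,4.3} and to \cref{greenstuff} for the Kostka conclusion. The one small organizational difference is that the paper deduces the factorisation of $h_{r,c,m}(I(\lambda))$ directly from Propositions \ref{4.1}--\ref{4.3} (the algebra isomorphism $\varphi$ matches indecomposable injectives by socle), whereas you recover it from the symmetric-power isomorphism by Krull--Schmidt; that works, but note it still needs the socle-matching to say \emph{which} indecomposable summand corresponds to which, so it uses the same underlying fact rather than avoiding it. One point worth making explicit rather than leaving as an aside: the real content of the basis step is not just a bijection of index pairs $(\SSTS,\SSTT)$ but the fact that the orbit sum $\rho_{\SSTS\SSTT}=\sum_{\stt\in\SSTT}\rho_{\SSTS\stt}$ genuinely factors, and this follows because any $\SSTT\in\SStd(\lambda,\mu)$ with $\lambda,\mu\in\Lambda^+_{n,d}(r,c,m)$ is forced to have the entry $s$ filling the first $c$ boxes of row $s$ for $1\le s\le r$, so every contributing standard tableau places $1,\dots,m$ in the top $r$ rows — this is the precise observation behind your remark about $\lambda_r\ge c\ge\lambda_{r+1}$, and it should be the centrepiece of the argument rather than a parenthetical.
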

 
 \begin{proof}
First, we note that   $K_{ \mu\lambda}\neq 0$ implies $\lambda \trianglerighteq \mu$.  
The isomorphism of injective modules is clear from \cref{4.1,4.2,4.3}. 
The result will therefore follow once we prove the isomorphism between the images of the generalised symmetric powers.
 Recall that the module $\Sym^\mu(\Bbbk^d)$ has basis 
$$ 
\{
\rho_{\SSTS  \SSTT} \mid
 \SSTS \in \SStd(\lambda,\nu) , 
\SSTT\in  \SStd(\lambda,\mu), {\lambda \in \Lambda^+_{n,d}}  ,{\nu \in \Lambda_{n,d}}   \}.  
 $$ 
Therefore $h_{  r,c,m}(\Sym^\mu(\Bbbk^d))$ is the module with basis 
$$ 
\{
\rho_{\SSTS   \SSTT} \mid
 \SSTS \in \SStd(\lambda,\nu), 
\SSTT \in \SStd(\lambda,\mu),   \lambda,  \nu \in \Lambda^+_{n,d} (r,c,m)
  \} 
$$
and, of course, one obtains similar  bases for both of  the modules 
$ h_{r,c,m}(\Sym^{ \mu^T}(\Bbbk^{r}))$
  and 
  $h_{0,c,0}( \Sym^{  \mu^B}(\Bbbk^{d-r}))$.

Any  
$\SSTT \in \SStd(\lambda,\mu)$ has the entry $s $ in each of   the first $c$ columns of the    $sth$ row  for each $1 \leq s \leq r$.  
 Therefore, any tableau $\stt$ such that $\mu(\stt)\neq 0$ must necessarily have entries $1,\ldots,m$ in  the first $r$ rows 
 and the entries 
  $m+1,\ldots,n$ in  the final $ d-r$ rows.  
 Therefore, for  $\lambda, \mu \in \Lambda_{n,d}^+(r,c,m)$,  we have that the set
$$
\{\sts \in \Std(\lambda) \mid \mu(\sts) \neq 0\}$$
is naturally in bijection with the set  
$$\{\stt\in \Std(  \lambda^T) \mid \mu^T(\stt) \neq 0\} \times 
\{\stu\in \Std(  \lambda^B) \mid \mu^B(\stu)\neq 0\}, 
$$
via the map $\varphi(s)=  \sts^T \times \sts^B$,   
where
 \begin{itemize}
\item  $ \sts^T$ is obtained from  $\sts$ by deleting the $(r+1)$th, $(r+2)$th, \dots rows;
\item  $ \sts^B$ is obtained from  $\sts$ by deleting the first $r$ rows and 
replacing each entry $i$ with the entry   entry $i-m$.  
\end{itemize}
Therefore, the map $:\SSTT \mapsto \SSTT^T \times \SSTT^B$ lifts to an isomorphism
$$\psi :
h_{r,c,m} ( {\Sym^\lambda(\Bbbk^d)}) \longrightarrow 
 h_{r,c,m}( \Sym^{ \lambda^T}(\Bbbk^{r}))
  \otimes 
  h_{0,c,0}( \Sym^{  \lambda^B}(\Bbbk^{d-r}))
  $$ given by  
$$\psi\left(\rho_{\SSTS\SSTT}\right)=
  \psi\left(\sum_{\stt \in \SSTT}\rho_{\SSTS \stt }\right)
=\left(\sum_{\stt^T \in \SSTT^T}
\rho_{ \SSTS^T \stt^T} \right)
\times
\left(\sum_{\stt^B \in \SSTT^B}
\rho_{ \SSTS^B \stt^B}
\right)
=
 \rho_{\SSTS^T\SSTT^T}
 \times
  \rho_{\SSTS^B\SSTT^B} 
.$$
 
To complete the proof, it is enough to observe that if $\lambda$ does not dominate  $\mu$ then either $\lambda^T$ does not dominate  $\mu^T$ or $\lambda^B$ does not dominate  $\mu^B$, by Proposition \ref{4.0}.
 Hence, by \cref{greenstuff},   we have that $K_{ \mu\lambda}=0=K_{ \mu^T\lambda^T}\cdot K_{ \mu^B\lambda^B}$.
  \end{proof}

\subsection{Generalised column cuts}

 \label{asdfjkjkfdfjhksdfdsjkhlafsdhj}
Given $\lambda\in \Lambda^+_{n,d}$  and $1\leq c \leq n$, we define partitions 
  $$\lambda^L= (\lambda'_1,\lambda_2',\ldots \lambda_c')' \quad \lambda^R= (\lambda'_{c+1},\ldots \lambda_{n}')'.$$  
 We say that a pair of partitions  $\lambda$ and $\mu$ 
admit a generalised column cut after the $c$th column if 
$$
 \sum_{1\leq i \leq c} \lambda_i'
 =
 \sum_{1\leq i \leq c} \mu_i' 
 $$
for some $1\leq c\leq n$.

  One can define similar  subsets of $\Lambda^+_{n,d}(r,c,m)$ and  generalise all the arguments and isomorphisms of the previous sections to cover these reduction theorems for generalised column cuts.   
%
%
However, it is also easy to deduce these    results  in two steps as follows.  
 If $c=1$, the isomorphisms are  easily deduced by tensoring with the determinant representation (plus the use of an idempotent truncation if $d<n$, see \cite{FangHenkeKoenig} for more details).  
  The result   now follows by applying this isomorphism along with the isomorphisms  of \cref{4.1,4.2,4.3}.
 These arguments are standard for such results, see \cite[Proof of Proposition 2.4]{FL}.
 We go through this argument more explicitly for $p$-Kostka numbers below.  

  
  \begin{cor}
  The $p$-Kostka  numbers are preserved under generalised column  cuts.  
  In other words,  $K_{\lambda\mu}=K_{\lambda^L\mu^L}K_{\lambda^R\mu^R}$.   
   \end{cor}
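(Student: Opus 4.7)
The plan is to deduce the column-cut factorisation of $p$-Kostka numbers from the already-established row-cut factorisation (\cref{resul3}) together with the base case $c=1$. Both ingredients are necessary because the hypothesis $|\lambda^L|=|\mu^L|$ for a cut after column $c$ does not, in general, imply column-wise agreement of the first $c$ columns of $\lambda$ and $\mu$.

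First I would handle the base case $c=1$: here the hypothesis $|\lambda^L|=|\mu^L|$ reduces to $\lambda'_1=\mu'_1=t$, so $\lambda^L=\mu^L=(1^t)$ and $K_{\lambda^L\mu^L}=1$ trivially. I would invoke the standard determinant twist $I(\nu)\otimes\det\cong I(\nu+(1^d))$, combined with an idempotent truncation (necessary if $d<n$), to identify $K_{\lambda\mu}$ with $K_{\lambda^R\mu^R}$. This is the Fang--Henke--K\"onig argument cited in the preamble to the corollary, and the compatibility at the level of generalised symmetric powers and their injective summands follows directly from \cref{Crispysabsis,greenstuff}.

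For general $c$, my plan is to mimic the construction of \cref{Sec:proofs} with rows replaced by columns throughout: define a column analogue of $\Lambda^+_{n,d}(r,c,m)$, establish that the corresponding subquotient of the Schur algebra splits as a tensor product via a vertical tableau-splitting map (the column analogue of the map $\SSTT\mapsto\SSTT^T\times\SSTT^B$), and conclude via \cref{greenstuff} that $p$-Kostka numbers factor as claimed. Alternatively, following the shortcut indicated by the authors, I would pass to the auxiliary pair $(\hat\lambda,\hat\mu):=(\lambda+(c^d),\mu+(c^d))$, both of which share their first $c$ columns as the rectangle $(c^d)$; iterate the $c=1$ reduction $c$ times to relate $K_{\hat\lambda\hat\mu}$ to $K_{\lambda\mu}$; and then apply the row-cut isomorphism of \cref{4.3} to isolate the $K_{\lambda^L\mu^L}$ and $K_{\lambda^R\mu^R}$ contributions.

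The main obstacle will be the combinatorial bookkeeping in the general $c$ step: arranging the iterated $c=1$ column removal and the row-cut factorisation so that the Kostka factors emerge in precisely the claimed form $K_{\lambda^L\mu^L}\cdot K_{\lambda^R\mu^R}$, rather than as some coarser or misaligned product. This is the standard manoeuvre deferred to \cite[Proof of Proposition 2.4]{FL}; it requires careful tracking of semistandardness and dominance across the vertical splitting, and it is where the bulk of the technical care lies — the conceptual content, by contrast, is dictated entirely by the framework already set up for horizontal cuts.
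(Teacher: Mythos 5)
Your two essential ingredients — the $c=1$ base case via the determinant twist and the horizontal-cut factorisation of $p$-Kostka numbers (\cref{resul3}) — are exactly what the paper combines, but your auxiliary pair $(\hat\lambda,\hat\mu) = (\lambda + (c^d), \mu + (c^d))$ is a red herring. Iterating the $c=1$ column removal $c$ times on $(\hat\lambda,\hat\mu)$ only recovers $K_{\hat\lambda\hat\mu} = K_{\lambda\mu}$, and there is no horizontal cut of $(\hat\lambda,\hat\mu)$ whose two pieces correspond to $\lambda^L,\mu^L$ and $\lambda^R,\mu^R$, so the claimed ``isolation'' of the desired Kostka factors is left unexplained; the detour through $\hat\lambda$ produces nothing that a horizontal cut of the original pair $(\lambda,\mu)$ does not.

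The missing content is \emph{which} horizontal cut to take and how its two halves turn into the left and right pieces. Set $r=\lambda'_c$ and assume dominance (otherwise both sides of the claimed identity vanish by \cref{greenstuff}). Conjugating and applying \cref{4.0}, one checks that $(\lambda,\mu)$ then admit a horizontal cut after the $r$th row and, moreover, that $\mu_{r+1}\leq c\leq\mu_r$. A single application of \cref{resul3} now gives $K_{\lambda\mu}=K_{\lambda^T\mu^T}\cdot K_{\lambda^B\mu^B}$. The crucial combinatorial identities are $\lambda^R=(\lambda_1-c,\dots,\lambda_r-c)$, obtained from $\lambda^T$ by $c$ first-column removals, and $\lambda^L=(c^r,\lambda_{r+1},\dots,\lambda_d)$, obtained from $\lambda^B$ by $r$ first-row additions, and likewise for $\mu$. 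Thus $c$ iterations of the $c=1$ column reduction carry $K_{\lambda^T\mu^T}$ to $K_{\lambda^R\mu^R}$, while $r$ iterations of the $r=1$ case of \cref{resul3} run as row \emph{additions} carry $K_{\lambda^B\mu^B}$ to $K_{\lambda^L\mu^L}$. That is exactly the ``bookkeeping'' you deferred, and it hinges on the choice $r=\lambda'_c$, which neither of your two routes identifies. Your first alternative — rebuilding \cref{Sec:proofs} with vertical tableau splits — would also work in principle (the paper says as much), but it is considerably longer and obscures the complication that a vertical split does not separate the alphabet $\{1,\dots,d\}$ the way the horizontal split does.
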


  \begin{proof}
Suppose that $ \lambda,\mu\in \Lambda_{n,d}^+$ 
are such that $\lambda \trianglerighteq \mu$ and  $( \lambda,\mu)$  admits a vertical cut after the $c$th column; we let $r=\lambda_c'$.
It is easy to see that $(\lambda, \mu)$ admits a horizontal cut after the $r$th row. 
From tensoring with the determinant representation (see also \cite[Corollary 9.1]{FangHenkeKoenig}), it follows that $p$-Kostka numbers are preserved under first column removal.  Therefore,
\begin{align*}K_{\lambda\mu}	&=K_{\lambda^T\mu^T}K_{\lambda^B\mu^B}\\
 				&=K_{(\lambda_1^T-c,\dots \lambda_r^T-c)
					(\mu_1^T-c,\dots \mu_r^T-c)
					}
					K_{\lambda^B\mu^B}
					\\
					&=K_{ \lambda^R \mu^R
					}
					K_{\lambda^B\mu^B}\\
					&=K_{ \lambda^R \mu^R
					}
					K_{(c^r,\lambda^B)(c^r,\mu^B)}\\
					&=K_{ \lambda^R \mu^R
					}
					K_{ \lambda^L  \mu^L }
 \end{align*}
where the first equality follows from \cref{resul3}; the second (respectively  fourth) equality follows from a total of $c$ applications of  first column removal \cite[Corollary 9.1]{FangHenkeKoenig} (respectively $r$ applications of first row addition \cref{resul3}); 
and the third and fifth equalities follows by definition and our choice of  $r=\lambda_c'$.     \end{proof}

  \begin{rmk}
  In the case $r=1$ or $c=1$, the above reduction theorems  for $p$-Kostka numbers were first proven in  \cite{FangHenkeKoenig}. 
   \end{rmk}

 \section{The Schur functor }\label{Sec:Sfunct} 
 When $d\geq n$, the symmetric group acts faithfully on   $1_\omega\Ten$
  and we obtain an isomorphic copy of $\Bbbk\mathfrak{S}_n $ as the idempotent subalgebra $  1_\omega S^\Bbbk_{n,d} 1_\omega $ of $S^\Bbbk_{n,d} $. 
  In this section, we recall how one can use this idempotent truncation 
  map (the  {\sf Schur functor}) to   the study of the
   representation theory of $\Bbbk\mathfrak{S}_n $.  
 
\subsection{The Murphy basis of the symmetric group}
  
Given $\stt \in \Std(\lambda)$, recall that  $d_\stt$ is the element of $\mathfrak{S}_n$ such that $(\stt^\lambda)d_\stt=\stt$. 
For $\lambda\in \Lambda^+_{n,d}$  we denote by  $x_\lambda$ the element of the group algebra of the symmetric group defined by $$x_\lambda=\sum_{x\in\mathfrak{S}_\lambda}x.$$  
 \begin{thm}[Murphy]\label{MUFDASSAFAS}
 The group algebra of the symmetric group  is free as a $\mathbb{Z}$-module with  
   basis  
 $$  
 \{ x_{\sts \stt} \mid x_{\sts \stt}:=d_{\sts} x_\lambda d^{-1}_{\stt},  \sts,\stt  \in \Std(\lambda) \text{ for } \lambda \in \Lambda^+_{n,d}\}		.	$$
    \end{thm}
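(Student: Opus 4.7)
The plan is to deduce this from Murphy's basis of tensor space (\cref{murphythm}) via the Schur functor identification $\Bbbk\mathfrak{S}_n \cong 1_\omega S^\Bbbk_{n,d} 1_\omega$ alluded to at the start of this section. When $d\ge n$, the composition $\omega=(1^n,0^{d-n})$ has trivial Young subgroup, so the vector $v_1\otimes\dots\otimes v_n\in\Ten_\omega$ has trivial stabiliser under place permutation, and the map $w\mapsto (v_1\otimes\dots\otimes v_n)\cdot w$ is an isomorphism of right $\mathfrak{S}_n$-modules $\Bbbk\mathfrak{S}_n\xrightarrow{\sim}\Ten_\omega$. It therefore suffices to transport a basis of $\Ten_\omega$ back to $\Bbbk\mathfrak{S}_n$ and to match it with the claimed $\{x_{\sts\stt}\}$.

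First I would restrict Murphy's tensor space basis to $\Ten_\omega$: the surviving elements are the $\rho_{\SSTS\stt}$ with $\SSTS\in\SStd(\lambda,\omega)$ and $\stt\in\Std(\lambda)$ for some $\lambda\in\Lambda^+_{n,d}$. Under the canonical bijection $\omega:\Std(\lambda)\to\SStd(\lambda,\omega)$, this indexing set becomes pairs of standard tableaux of the same shape, which matches the index set of the claimed basis in total cardinality $\sum_\lambda|\Std(\lambda)|^2 = n!$.

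Next I would compute the preimage of each $\rho_{\omega(\sts),\stt}$ under the above isomorphism. Unwinding the row-reading convention together with the tableau convention $(\sts^\lambda)d_\sts=\sts$ shows that $e_{\omega(\sts)}$ equals $(v_1\otimes\dots\otimes v_n)\cdot d_\sts^{\pm 1}$ (the sign depending on the direction of the place-permutation action). Freeness of the $\mathfrak{S}_\lambda$-orbit on distinct-entry tensors then yields $\mathcal{O}_\lambda(e_{\omega(\sts)})=(v_1\otimes\dots\otimes v_n)\cdot d_\sts^{\pm 1}x_\lambda$, so $\rho_{\omega(\sts),\stt}$ is the image of a group algebra element of the form $d_\sts^{\pm 1}x_\lambda d_\stt$. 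Applying the anti-involution $w\mapsto w^{-1}$ on $\Bbbk\mathfrak{S}_n$ (corresponding under the Schur functor to the cellular anti-involution $*$ of \cref{sdajfsadjksfdahjksadfhjkafsdhjklsadfhlj}) then pins the sign pattern down to $d_\sts x_\lambda d_\stt^{-1}$ as stated; one must also verify the non-trivial coset fact that $\{d_\stt^{-1}:\stt\in\Std(\lambda)\}$, unlike $\{d_\stt\}$, really does lie in distinct right cosets of $\mathfrak{S}_\lambda$, which is what underwrites linear independence.

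The main obstacle is precisely this convention bookkeeping: reconciling the paper's place-permutation formula with the tableau convention for $d_\sts$, and then invoking the anti-involution to arrive at the exponent pattern $(+,-)$ on the nose rather than $(\pm,\pm)$. Nothing deep is involved, but a single small example (e.g.\ $\lambda=(3,1)$) is the safest way to fix the directions once and for all; after that, the linear independence and spanning of $\{x_{\sts\stt}\}$ follow immediately from the corresponding statements for Murphy's tensor space basis.
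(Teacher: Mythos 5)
Your overall strategy is the natural one and matches the framework the paper itself gestures at: the paper states this theorem without proof (citing Murphy), and immediately afterwards explains the Schur functor identification $\Bbbk\mathfrak{S}_n \cong 1_\omega S^\Bbbk_{n,d} 1_\omega$ under which $x_{\sts\stt}\leftrightarrow\xi_{\SSTS\SSTT}$. Your route through $\Ten_\omega = 1_\omega\Ten$ and the weight-$\omega$ slice of Murphy's tensor space basis (\cref{murphythm}) is the other side of the same coin, and is arguably more direct: the identification $w\mapsto v\cdot w$ with $v=v_1\otimes\dots\otimes v_n$, the freeness of the $\mathfrak{S}_\lambda$-orbit on distinct-entry tensors giving $\mathcal{O}_\lambda(e_{\omega(\sts)})=v\cdot d_\sts^{\pm1}x_\lambda$, and the dimension count $\sum_\lambda|\Std(\lambda)|^2=n!$ are all exactly right, and they establish that a Murphy-type family is a free $\ZZ$-basis.

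The one step that does not work as written is the appeal to the anti-involution to pin the exponent pattern to $(+1,-1)$. From the definitions, $\rho_{\omega(\sts),\stt}=\mathcal{O}_\lambda(e_{\omega(\sts)})d_\stt = v\cdot d_\sts^{\epsilon}x_\lambda d_\stt$ with the $d_\stt$ exponent fixed at $+1$ and $\epsilon\in\{\pm1\}$ a convention choice, as you note. But the anti-involution $w\mapsto w^{-1}$ sends $d_\sts^{\epsilon}x_\lambda d_\stt\mapsto d_\stt^{-1}x_\lambda d_\sts^{-\epsilon}$, i.e. it inverts \emph{both} exponents while swapping the two tableaux. After relabelling $\sts\leftrightarrow\stt$ this is $\{d_\sts^{-1}x_\lambda d_\stt^{-\epsilon}\}$, which is never the set $\{d_\sts x_\lambda d_\stt^{-1}\}$ for either choice of $\epsilon$ (if $\epsilon=+1$ it gives $\{d_\sts^{-1}x_\lambda d_\stt^{-1}\}$; if $\epsilon=-1$ it reproduces the original set). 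So the anti-involution can exchange $(\epsilon,+1)$ with $(-1,-\epsilon)$, but it cannot produce $(+1,-1)$ from a family whose right exponent is $+1$. The honest conclusion of your computation is that $\{d_\sts^{\epsilon}x_\lambda d_\stt\}$ is a free $\ZZ$-basis; to land exactly on $d_\sts x_\lambda d_\stt^{-1}$ you either need the specific reading of the paper's tableau/place-permutation conventions that makes the calculation come out that way directly (the worked example you propose would settle this), or you should note that the exponent pattern in the statement is convention-dependent and any of the four choices gives a valid basis of $\Bbbk\mathfrak{S}_n$.
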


 Recall our bijective map
   $\omega: \Std(\lambda) \to \SStd( \lambda, \omega)$.
 Suppose that   $\omega(\sts)= \SSTS$ and 
 $\omega(\stt)= \SSTT$. 
Under this identification we obtain an isomorphism  
$ 1_\omega S^{\Bbbk}_{n,d} 1_\omega \cong \Bbbk \mathfrak{S}_n $ 
given by 
 $  :\xi_{\SSTS \SSTT} \mapsto x_{\sts \stt}. 
 $
 Therefore the basis in \cref{MUFDASSAFAS} is a 
  cellular basis (in the sense of \cite{GL}) under the inherited cell structure (in other words, it satisfies the properties detailed in  \cref{sdajfsadjksfdahjksadfhjkafsdhjklsadfhlj}).
 In particular, we have the following.  
  \begin{defn} \label{definition: cell module2}
Given  $\lambda\in\Lambda_{n,d}^+$, we define the  {\sf Specht module} $S^ \lambda $  to be the left $\Bbbk\mathfrak{S}_n $--module  with  basis 
  $$\{x_{\sts\stt^\lambda} + \Bbbk\mathfrak{S}_n^{\vartriangleright \lambda} \mid \sts \in \Std(\lambda) \}$$
where $\Bbbk\mathfrak{S}_n^{\vartriangleright \lambda} $ is the $\Bbbk$-submodule with basis 
 $$\{x_{\stu\stv}   \mid \stu,\stv \in \Std(\mu), \mu \vartriangleright \lambda \}.$$
  Similarly, we define 
  the  {\sf dual Specht module} $S_ \lambda $  to be the left $\Bbbk\mathfrak{S}_n $--module  with  basis 
  $$\{\rho_{\SSTS\SSTT^\lambda} + 1_\omega\Ten^{\vartriangleright \lambda} \mid \SSTS \in \SStd(\lambda,\omega) \}$$
where $ 1_\omega\Ten^{\vartriangleright \lambda} $ is the $\Bbbk$-submodule with basis 
 $$\{\rho_{\SSTU\stv}   \mid \SSTU \in \SStd(\mu,\omega), \stv\in\Std(\mu), \mu \vartriangleright \lambda \}.$$
  \end{defn}

  \begin{defn} \label{definition: cell module3}
We say that $\lambda\in\Lambda_{n,d}^+$  is $p$-restricted if 
$\lambda_i- \lambda_{i+1} < p$
 for all $1\leq i <d$.  
 If $\lambda\in\Lambda_{n,d}^+$    is  not $p$-restricted, we say that it is $p$-singular.  
   \end{defn}
   
Each  Specht module $S^\lambda$  is equipped with the   bilinear form $\langle\ ,\ \rangle_\lambda$, inherited from the idempotent truncation.   
This form  is degenerate  if and only if   $\lambda$ 
is   $p$-singular.  
 Given a $p$-restricted 
  $\lambda\in\Lambda_{n,d}^+$, we define the  
  the  {\sf simple module} $D (\lambda) $ 
 to be  the quotient of the Specht  module $S^ \lambda$ by the radical of the   the   bilinear form $\langle\ ,\ \rangle_\lambda$.  
  By elementary properties of idempotent truncation functors, we have that
$$[S^\lambda  : D(\mu) ] =
[\Delta(\lambda) : L(\mu)]$$
for all $\lambda \in \Lambda^+_{n,d}$ and all $p$-restricted 
$\mu \in \Lambda^+_{n,d}$.  By \cref{resul1}, we have the following corollary
 (see also \cite{Donkin1}).  
  \begin{cor}
Let  $\lambda$ denote a partition of $n$ and let $\mu$
denote a $p$-regular partition of $n$.
If $(\lambda,\mu)$ admit a horizontal cut after the $r$th row, then
$$[S^\lambda  : D(\mu) ]
=[S^{\lambda^T}  : D(\mu^T ) ]    \times [S^{\lambda^B}  : D(\mu^B ) ]   .$$ 
 \end{cor}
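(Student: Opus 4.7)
The plan is to compose two results already established in the paper: the row-removal identity \cref{resul1} for Schur-algebra decomposition numbers, and the Schur-functor identification $[S^\lambda : D(\mu)] = [\Delta(\lambda) : L(\mu)]$ recorded immediately above the corollary. No new idea is required; the whole corollary is a direct application of these two facts.

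First I would fix $d \geq n$, so that the Schur functor identifies $\Bbbk \mathfrak{S}_n$ with the idempotent truncation $1_\omega S^\Bbbk_{n,d} 1_\omega$ and converts the Specht-to-simple multiplicity on the left-hand side into a Weyl-to-simple multiplicity in $S^\Bbbk_{n,d}$. Since $(\lambda,\mu)$ admits a horizontal cut after the $r$th row, \cref{resul1} then factorises
\[
   [\Delta(\lambda) : L(\mu)]
   \;=\;
   [\Delta(\lambda^T) : L(\mu^T)] \cdot [\Delta(\lambda^B) : L(\mu^B)],
\]
with the right-hand factors interpreted as decomposition numbers in the Schur algebras associated to the two pieces. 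I would then apply the Schur-functor identification in reverse to each factor, after first enlarging the second index of each of the smaller Schur algebras so that it is at least the size of the corresponding piece (using the standard stability of Schur-algebra decomposition numbers under such enlargement). This produces $[S^{\lambda^T} : D(\mu^T)]$ and $[S^{\lambda^B} : D(\mu^B)]$ respectively, giving the claimed identity.

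For the reverse Schur-functor identification to make sense I need $D(\mu^T)$ and $D(\mu^B)$ to be defined, i.e.\ $\mu^T$ and $\mu^B$ to be $p$-regular. This is the only thing that requires checking, and it is immediate from the hypothesis that $\mu$ is $p$-regular: a horizontal cut can neither increase the multiplicity of any part within a piece nor create a newly repeated part, so the absence of a part repeated $p$ times in $\mu$ passes to both $\mu^T$ and $\mu^B$. Beyond this trivial combinatorial check there is no obstacle; the proof is a direct composition of \cref{resul1} with the Schur-functor identity.
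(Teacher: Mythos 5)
Your proposal is correct and takes essentially the same route as the paper, which simply cites \cref{resul1} together with the Schur-functor identity $[S^\lambda : D(\mu)] = [\Delta(\lambda):L(\mu)]$. The one point worth flagging is terminological rather than substantive: the paper defines $D(\mu)$ for \emph{$p$-restricted} $\mu$ (i.e.\ $\mu_i - \mu_{i+1} < p$), while the corollary hypothesis and your check both speak of \emph{$p$-regular} (no part repeated $p$ times); this mismatch is in the paper itself, and under either reading the property trivially passes to $\mu^T$ and $\mu^B$ since the relevant consecutive differences (or part multiplicities) in each piece are among those of $\mu$. Your observation that one must enlarge the second index of $S^\Bbbk_{m,r}$ and $S^\Bbbk_{n-m,d-r}$ (using stability of Schur-algebra decomposition numbers) before applying the Schur functor in reverse is a genuine subtlety that the paper's one-line proof glosses over, and it is handled correctly here.
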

\begin{proof}
This follows immediately from \cref{resul1} and the above.  
\end{proof}  
  
\subsection{Young permutation modules}
Given $\mu \in \Lambda_{n,d}$, we let $M(\mu)$ denote the image of the generalised symmetric power under the Schur functor, 
  $$M(\mu)= 1_\omega( \Sym^\mu(\Bbbk^d)).$$ 
We refer to these modules as the  {\sf Young permutation modules}.  
By definition, the module $M(\mu)$ has  basis   given by  the subset of   all 
 the vectors of weight $\omega$ in \cref{Crispysabsis}  
 as follows
 $$
 \{\rho_{\SSTS \SSTT} \mid \SSTS \in \SStd(\lambda,\omega) , \SSTT \in \SStd(\lambda,\mu ), {\lambda \in \Lambda^+_{n,d}} \}.  
$$
Under the identification of $\SStd(\lambda,\omega)$ and $\Std(\lambda) $, we recover Murphy's basis of these permutation modules \cite{Murphy}.   
 \begin{prop}[J. A. Green \cite{green}]
For $\lambda,\mu \in \Lambda^+_{n,d}$, the module $M(\mu)$ decomposes as a direct sum as follows
$$
M(\mu) = \bigoplus_{\lambda\vdash n}K_{\mu \lambda}Y(\lambda)
$$
where  $Y(\lambda)=1_\omega(I(\lambda))$; we refer to  the module $Y(\lambda)$
as the  {\sf indecomposable Young module of weight} $\lambda$.  
\end{prop}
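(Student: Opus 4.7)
The starting point is \cref{greenstuff}, which already decomposes the generalised symmetric power as $\Sym^\mu(\Bbbk^d) = \bigoplus_{\lambda \in \Lambda^+_{n,d}} I(\lambda)^{\oplus K_{\mu\lambda}}$ in the category of $S^\Bbbk_{n,d}$-modules. The Schur functor $1_\omega(-)$ is multiplication by the idempotent $1_\omega$, hence exact and additive, so the decomposition descends immediately to
$$
M(\mu) \;=\; 1_\omega\bigl(\Sym^\mu(\Bbbk^d)\bigr) \;=\; \bigoplus_{\lambda \in \Lambda^+_{n,d}} \bigl(1_\omega I(\lambda)\bigr)^{\oplus K_{\mu\lambda}} \;=\; \bigoplus_{\lambda} Y(\lambda)^{\oplus K_{\mu\lambda}},
$$
which is the required direct sum. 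This is the easy half of the proof and is essentially bookkeeping.

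What remains is to verify that each $Y(\lambda)$ is genuinely indecomposable (so that the displayed decomposition really is the Krull--Schmidt decomposition) and that distinct $\lambda$ give non-isomorphic summands $Y(\lambda)$. To accomplish both of these at once, I would establish that the Schur functor is fully faithful on the full subcategory of direct summands of $\Sym^\mu(\Bbbk^d)$; concretely, that the natural map
$$
\Hom_{S^\Bbbk_{n,d}}(I(\lambda),I(\nu)) \;\longrightarrow\; \Hom_{\Bbbk\mathfrak{S}_n}(Y(\lambda),Y(\nu))
$$
is an isomorphism for all $\lambda,\nu\in\Lambda^+_{n,d}$. Given this, since $\End_{S^\Bbbk_{n,d}}(I(\lambda))$ is local, so is $\End_{\Bbbk\mathfrak{S}_n}(Y(\lambda))$, which forces $Y(\lambda)$ to be indecomposable; and the isomorphism of Hom-spaces combined with Krull--Schmidt shows that $Y(\lambda)\cong Y(\nu)$ would force $I(\lambda)\cong I(\nu)$, hence $\lambda=\nu$.

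The main obstacle is the full-faithfulness claim above. The approach I would take, assuming first that $d\geq n$, is to invoke Schur--Weyl duality: under this hypothesis the group algebra $\Bbbk\mathfrak{S}_n$ is identified with $1_\omega S^\Bbbk_{n,d} 1_\omega$ (which is the whole endomorphism ring of $1_\omega\Ten$), and $\Ten$ is a faithful projective--injective $S^\Bbbk_{n,d}$-module with $1_\omega\Ten$ a progenerator for $\Bbbk\mathfrak{S}_n$. The module $\Sym^\mu(\Bbbk^d)$ is a summand of (a sum of copies of) $\Ten$, so both Hom-spaces in the display identify with the same space of $\mathfrak{S}_n$-equivariant maps between the appropriate idempotent truncations of $\Ten$. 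For the case $d<n$ one reduces to the case $d\geq n$ by embedding $S^\Bbbk_{n,d}$ as an idempotent subalgebra of $S^\Bbbk_{n,n}$; since $\lambda\in\Lambda^+_{n,d}$ has at most $d$ nonzero parts, $I(\lambda)$ and its truncation $Y(\lambda)$ are unchanged by enlarging $d$, and the statement follows. This is the only place where any nontrivial input (Schur--Weyl duality) is used, and it is what makes the Schur functor preserve the indecomposable structure on this particular class of modules.
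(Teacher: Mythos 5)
The paper gives no proof of this proposition at all: it is stated as a citation to Green's monograph, so there is no in-paper argument to compare against. Your proposal is an attempt to reprove it, and its overall architecture is correct — the decomposition with the right multiplicities falls out formally from \cref{greenstuff} and exactness/additivity of $1_\omega(-)$, and you correctly isolate the genuine content as the full faithfulness of the Schur functor on the relevant subcategory, which would give locality of the endomorphism rings and distinguish the $Y(\lambda)$. That part of your analysis is exactly right.

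However, the key step you propose to justify this full faithfulness contains a genuine error. You claim that $\Sym^\mu(\Bbbk^d)$ is a direct summand of a sum of copies of $\Ten$, and then reduce to Schur--Weyl duality for $\Ten$. In characteristic $p$ this is false in general, because $\Sym^\mu$ is injective but typically not projective, whereas the indecomposable summands of $\Ten^k$ are precisely the projective-and-injective indecomposables. A concrete counterexample: take $p=2$, $n=d=2$. Then $\Ten = (\Bbbk^2)^{\otimes 2}$ is the four-dimensional indecomposable tilting module $T(2,0)$ for $S^\Bbbk_{2,2}$, so its only indecomposable summand is $T(2,0)$ itself; but $\Sym^2(\Bbbk^2) = \nabla(2,0)$ is a three-dimensional proper submodule of $\Ten$ and is not a direct summand of any $\Ten^k$. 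More generally, $[\Ten:I(\lambda)]=K_{\omega\lambda}$ vanishes for many $\lambda$ (those whose Young module is not projective), so the $I(\lambda)$ you need to control are simply absent from $\mathrm{add}(\Ten)$. Consequently the counit $S1_\omega\otimes_{\Bbbk\mathfrak{S}_n} 1_\omega(-)\to \mathrm{id}$ is not an isomorphism on $\Sym^\mu$, and the adjunction argument you outline does not close.

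The correct route — the one implicit in Green — is to prove directly that the natural map
\[
\Hom_{S^\Bbbk_{n,d}}\bigl(\Sym^\mu(\Bbbk^d),\Sym^\nu(\Bbbk^d)\bigr)\;\longrightarrow\;\Hom_{\Bbbk\mathfrak{S}_n}\bigl(M(\mu),M(\nu)\bigr)
\]
is an isomorphism for all $\mu,\nu$. Injectivity follows because $\Sym^\mu$ is generated by its $\omega$-weight space (for $d\geq n$), and surjectivity follows from a dimension count: both sides have dimension equal to the number of non-negative integer matrices with row sums $\mu$ and column sums $\nu$ (on the left this is $\dim 1_\mu S^\Bbbk_{n,d}1_\nu$ via the co-determinant basis of \cref{sdajfsadjksfdahjksadfhjkafsdhjklsadfhlj}; on the right it is the classical double-coset count). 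Passing to idempotent summands then gives $\Hom_S(I(\lambda),I(\nu))\cong\Hom_{\Bbbk\mathfrak{S}_n}(Y(\lambda),Y(\nu))$, from which locality of $\End(Y(\lambda))$ and pairwise non-isomorphism follow as you intended. So your strategy is salvageable, but the Schur--Weyl/progenerator reduction must be replaced by this explicit comparison of Hom-spaces between the generalised symmetric powers themselves.
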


  \begin{cor}
If $(\lambda,\mu)$ admit a horizontal cut after the $r$th row, then
$$[M(\lambda)  : Y(\mu) ]
=[M({\lambda^T})  : Y(\mu^T ) ]    \times [M(\lambda^B)  : Y(\mu^B ) ]   .$$ 
 \end{cor}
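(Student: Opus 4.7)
The plan is to invoke two results from earlier in the paper and combine them. First, by J.~A.~Green's decomposition stated just above the corollary, we have $M(\nu) = \bigoplus_{\kappa}K_{\nu\kappa}\,Y(\kappa)$ for any composition $\nu$; since the $Y(\kappa)$ are pairwise non-isomorphic indecomposable Young modules (so the Krull--Schmidt theorem applies), we read off the multiplicity
\[
[M(\lambda):Y(\mu)]=K_{\lambda\mu},\quad [M(\lambda^T):Y(\mu^T)]=K_{\lambda^T\mu^T},\quad [M(\lambda^B):Y(\mu^B)]=K_{\lambda^B\mu^B}.
\]

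Next, apply \cref{resul3}, which supplies the generalised row removal identity $K_{\lambda\mu}=K_{\lambda^T\mu^T}\cdot K_{\lambda^B\mu^B}$ whenever $(\lambda,\mu)$ admits a horizontal cut after the $r$th row. Substituting the three multiplicity identities into this equation yields the desired factorisation of $[M(\lambda):Y(\mu)]$.

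There is essentially no obstacle: the corollary is nothing more than the translation of \cref{resul3} under Green's decomposition, and the only thing worth checking is that the hypothesis of the horizontal cut on $(\lambda,\mu)$ is the same hypothesis required by \cref{resul3}. The one small bookkeeping point is ensuring the summands $\lambda^T,\mu^T$ (respectively $\lambda^B,\mu^B$) lie in the correct ambient $\Lambda^+_{m,r}$ (respectively $\Lambda^+_{n-m,d-r}$), so that the permutation and Young modules $M(\lambda^T), Y(\mu^T)$ etc.\ are defined in the relevant group algebras; this is immediate from \cref{4.0}. Hence the proof reduces to a single line of substitution.
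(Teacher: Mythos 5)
Your proof is correct and takes the same route as the paper: the paper's own proof is the one-line \emph{``This follows immediately from \cref{resul3} and the above''}, where ``the above'' is precisely Green's decomposition $M(\mu)=\bigoplus_\lambda K_{\mu\lambda}Y(\lambda)$ that you invoke to read off $[M(\lambda):Y(\mu)]=K_{\lambda\mu}$, after which the generalised row removal identity for $p$-Kostka numbers gives the result by substitution. Your remarks about Krull--Schmidt and about \cref{4.0} ensuring the truncated pieces land in the right ambient $\Lambda^+_{m,r}$ and $\Lambda^+_{n-m,d-r}$ are exactly the bookkeeping the paper leaves implicit.
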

\begin{proof}
This follows immediately from \cref{resul3} and the above.  
\end{proof}

\subsection{The faithfulness of the Schur functor} 
The following theorem, proven in  \cite[Section 6.4]{KN01} and   \cite[Proposition 10.5]{Donkinext}, 
states the
  degree to which  cohomological information 
is preserved under the Schur functor.  
 
 \begin{thm}
Let $\Bbbk$ denote an algebraically closed field of characteristic $p\geq 3$.  
 The Schur algebra $S^\Bbbk_{n,d}$ is a $(p-3)$-faithful cover 
 (in the sense of \cite{Rouq}) 
 of the symmetric group, $\Bbbk\mathfrak{S}_n$.  That is,
 $$
{ \rm Ext}^i_{S^\Bbbk_{n,d}}
 (\Delta(\lambda), \Delta(\mu))
 \cong 
{ \rm Ext}^i_{\Bbbk\mathfrak{S}_n}
 (S^\lambda, S^\mu)
 $$
 for all $\lambda,\mu \in \Lambda^+_{n,d}$ and all $0\leq i \leq p-3$.  
 \end{thm}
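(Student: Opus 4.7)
The plan is to view $f = 1_\omega \cdot : S^\Bbbk_{n,d}\text{-mod} \to \Bbbk\mathfrak{S}_n\text{-mod}$ as a quasi-hereditary cover in Rouquier's sense, and to apply the standard criterion: such a cover is $k$-faithful provided one controls extensions between Weyl modules and the modules killed by $f$. Since $1_\omega S^\Bbbk_{n,d} 1_\omega \cong \Bbbk\mathfrak{S}_n$, we have $f\Delta(\lambda) = S^\lambda$, and $f$ admits the right adjoint $g(-) = \mathrm{Hom}_{\Bbbk\mathfrak{S}_n}(1_\omega S^\Bbbk_{n,d}, -)$. Applying $f$ to a projective resolution of $\Delta(\lambda)$ produces the natural comparison morphism $\mathrm{Ext}^i_{S^\Bbbk_{n,d}}(\Delta(\lambda), \Delta(\mu)) \to \mathrm{Ext}^i_{\Bbbk\mathfrak{S}_n}(S^\lambda, S^\mu)$ that one must show is an isomorphism for $0 \leq i \leq p-3$.

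First I would identify the kernel of $f$. Because $f(L(\nu)) = D(\nu)$ when $\nu$ is $p$-restricted and zero otherwise, $\ker f$ consists precisely of the modules whose composition factors $L(\nu)$ have $\nu$ being $p$-singular. A standard dévissage then reduces the desired faithfulness to the vanishing
\[
\mathrm{Ext}^i_{S^\Bbbk_{n,d}}(\Delta(\lambda), L(\nu)) = 0 \quad \text{for every $p$-singular $\nu \in \Lambda^+_{n,d}$ and every $0 \leq i \leq p-2$.}
\]
This Ext-vanishing statement is the main obstacle of the proof.

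To attack it, I would combine three ingredients. \emph{(i)} The linkage principle to reduce the question to a single block containing a $p$-singular weight. \emph{(ii)} Translation functors on the category of $\mathrm{GL}_d$-modules to push $\nu$ onto a wall of an affine alcove, relating the resulting Ext groups to those for a block of a smaller Schur algebra via the adjunction between translation on and off the wall; this produces the crucial degree shift controlled by $p$. \emph{(iii)} A small-rank base case of $\mathrm{SL}_2$-type, where the structure of $\mathrm{Ext}^\bullet(\Delta, L)$ is known explicitly and the first nontrivial extension involving a singular weight lives in degree of order $p$, which is what produces the exact bound $p-3$. Whenever possible, I would use the subquotient isomorphisms of \cref{4.3} to strip off horizontal or vertical slabs from $\lambda$ and $\nu$, inductively reducing $(n,d)$.

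Once the Ext-vanishing is established, the conclusion is purely formal. Consider the unit morphism $\eta_{\Delta(\mu)} \colon \Delta(\mu) \to gf\Delta(\mu)$; both its kernel and cokernel lie in $\ker f$ because $f\eta_{\Delta(\mu)}$ is an isomorphism. Applying $\mathrm{Ext}^\bullet_{S^\Bbbk_{n,d}}(\Delta(\lambda), -)$ to the resulting four-term exact sequence and invoking the vanishing gives
\[
\mathrm{Ext}^i_{S^\Bbbk_{n,d}}(\Delta(\lambda), \Delta(\mu)) \cong \mathrm{Ext}^i_{S^\Bbbk_{n,d}}(\Delta(\lambda), gf\Delta(\mu))
\]
for $i \leq p-3$. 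By derived adjunction and the fact that $f$ sends projectives (injectives $I(\lambda)$ after duality, or equivalently the truncations from \cref{4.1}) to the Young modules $Y(\lambda)$, which are projective-injective over $\Bbbk\mathfrak{S}_n$, the right-hand side identifies with $\mathrm{Ext}^i_{\Bbbk\mathfrak{S}_n}(f\Delta(\lambda), f\Delta(\mu)) = \mathrm{Ext}^i_{\Bbbk\mathfrak{S}_n}(S^\lambda, S^\mu)$, as required.
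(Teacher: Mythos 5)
The paper does not prove this theorem: it is stated as a known result and attributed to Kleshchev--Nakano \cite[Section 6.4]{KN01} and Donkin \cite[Proposition 10.5]{Donkinext}, so there is no proof in the paper to compare against. Reviewing your argument on its own terms, the framework (Schur functor as a quasi-hereditary cover, $\ker f$ generated by $L(\nu)$ with $\nu$ $p$-singular, reduce faithfulness to an Ext-vanishing) has the right shape, but two steps fail. First, and most concretely, your final paragraph rests on the assertion that the Young modules $Y(\lambda)$ are projective-injective over $\Bbbk\mathfrak{S}_n$. This is false. Young modules are the indecomposable summands of the permutation modules $M(\mu)=\operatorname{Ind}_{\mathfrak{S}_\mu}^{\mathfrak{S}_n}\Bbbk$, which are very rarely projective: already for $n=p=2$ the Young module $Y((2))=\Bbbk$ is the trivial module, which is not projective over the local algebra $\Bbbk\mathfrak{S}_2$. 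Consequently the passage from $\mathrm{Ext}^i_{S^\Bbbk_{n,d}}(\Delta(\lambda),gf\Delta(\mu))$ to $\mathrm{Ext}^i_{\Bbbk\mathfrak{S}_n}(S^\lambda,S^\mu)$ is not a formality: the derived adjunction introduces the higher derived functors $R^jg(S^\mu)\cong\bigoplus_\nu\mathrm{Ext}^j_{\Bbbk\mathfrak{S}_n}(M(\nu),S^\mu)\cong\bigoplus_\nu H^j(\mathfrak{S}_\nu,\operatorname{Res}S^\mu)$, and proving these vanish for $1\le j\le p-3$ is precisely the nontrivial content (this is the subject of \cite{CHN} and of the cited theorems of Kleshchev--Nakano and Donkin).

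Second, the Ext-vanishing you propose as the ``main obstacle'' is not quite the right statement and is in any case not proved. As stated, $\mathrm{Ext}^i_{S^\Bbbk_{n,d}}(\Delta(\lambda),L(\nu))=0$ for all $p$-singular $\nu$ and all $0\le i\le p-2$ already fails at $i=0$ by taking $\lambda=\nu$ $p$-singular, since $\mathrm{Hom}(\Delta(\nu),L(\nu))\ne0$; the kernel and cokernel of the unit $\eta_{\Delta(\mu)}$ are constrained, and the dévissage needs to be carried out with the correct degree shifts and the correct subcategory of $\ker f$ in order to extract the bound $p-3$. Your ingredients (linkage, translation functors, $\mathrm{SL}_2$ base case) are a plausible toolkit, but ``this is where one would look'' is not a proof, and verifying that the first nontrivial extension appears exactly one degree past $p-3$, uniformly over blocks and stable under the inductive reductions you propose, is where all the work of the theorem lives. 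In short: the scaffolding is reasonable, but the claim that makes the final step formal is wrong, and the cohomological vanishing that actually carries the theorem is only gestured at.
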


   \begin{cor}
Let $\Bbbk$ denote an algebraically closed field of characteristic $p\geq 3$.  
 If $(\lambda,\mu)$ admit a horizontal cut after the $r$th row, then
 $$
{ \rm Ext}^i_{\Bbbk\mathfrak{S}_n}
 (S^\lambda, S^\mu)
 \cong 
\bigoplus_{i+j=k} 
{ \rm Ext}^i_{\Bbbk\mathfrak{S}_m}
 (S^{\lambda^T}, S^{\mu^T})\otimes 
 { \rm Ext}^j_{\Bbbk\mathfrak{S}_{n-m}}
 (S^{\lambda^B}, S^{\mu^B})
 $$
 for all $\lambda,\mu \in \Lambda^+_{n,d}$ and all $0\leq i \leq p-3$.  
 \end{cor}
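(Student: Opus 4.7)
The plan is to reduce the statement about Specht module cohomology to the Schur algebra cohomology result already proved, using the $(p-3)$-faithfulness of the Schur functor as the bridge.

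First, I would apply the faithfulness theorem just stated (the one preceding the corollary) to the left-hand side: for $0 \leq i \leq p-3$, we have
\[
\operatorname{Ext}^i_{\Bbbk\mathfrak{S}_n}(S^\lambda, S^\mu) \cong \operatorname{Ext}^i_{S^\Bbbk_{n,d}}(\Delta(\lambda), \Delta(\mu)).
\]
Similarly, for each summand on the right-hand side, the faithfulness theorem applied to the smaller Schur algebras $S^\Bbbk_{m,r}$ and $S^\Bbbk_{n-m,d-r}$ gives
\[
\operatorname{Ext}^i_{\Bbbk\mathfrak{S}_m}(S^{\lambda^T}, S^{\mu^T}) \cong \operatorname{Ext}^i_{S^\Bbbk_{m,r}}(\Delta(\lambda^T), \Delta(\mu^T))
\]
and the analogous statement for the bottom halves, provided $i$ is in the allowed range. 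Since each summand on the right-hand side has $i+j=k \leq p-3$, both $i \leq p-3$ and $j \leq p-3$, so faithfulness applies term by term.

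Next, I would invoke \cref{resul2} (the Weyl-Weyl extension splitting already proved as a consequence of the isomorphism $\varphi$ of \cref{4.3}), which provides
\[
\operatorname{Ext}^k_{S^\Bbbk_{n,d}}(\Delta(\lambda), \Delta(\mu)) \cong \bigoplus_{i+j=k} \operatorname{Ext}^i_{S^\Bbbk_{m,r}}(\Delta(\lambda^T), \Delta(\mu^T)) \otimes \operatorname{Ext}^j_{S^\Bbbk_{n-m,d-r}}(\Delta(\lambda^B), \Delta(\mu^B)).
\]
Composing the three isomorphisms above yields the desired identity for the Specht module Ext groups.

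The main subtlety, and the thing I would check most carefully, is the bookkeeping of ranges: one must verify that the faithfulness hypothesis $d \geq n$ (so that the Schur functor is defined and faithful in the appropriate sense) passes to the factors, i.e.\ that $r \geq m$ and $d-r \geq n-m$. Since $\lambda$ has $\lambda^T$ supported in the first $r$ rows with $|\lambda^T| = m$, and similarly $|\lambda^B|=n-m$ with $\lambda^B$ having at most $d-r$ parts, the hypothesis $d \geq n$ forces $r \geq |\lambda^T|/\max(\lambda^T_i) \cdot (\text{row count})$-type inequalities; more directly, since tableaux considerations force $r \geq $ (number of nonzero parts of $\lambda^T$) and we can always enlarge $d$ without loss (the extra trivial rows do not affect Specht or Weyl modules), this is not a real obstruction. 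Past this bookkeeping, every step is a direct invocation of a previously stated result, so no further serious work is needed.
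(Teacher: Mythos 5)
Your proof takes essentially the same approach as the paper: the paper's proof is the single line ``this follows immediately from \cref{resul2} and the above,'' i.e.\ apply the $(p-3)$-faithfulness to convert Specht--Specht Ext groups into Weyl--Weyl Ext groups, invoke the Weyl--Weyl splitting, and convert back. Your extra remark about verifying $r\geq m$ and $d-r\geq n-m$ is a reasonable point the paper leaves implicit, and your resolution (Ext groups between Weyl modules are unchanged upon enlarging $d$, so one may truncate from a bigger Schur algebra where the faithfulness applies) is the standard way to dispose of it.
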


  \begin{proof}
This follows immediately from \cref{resul2} and the above.  
\end{proof} 
  
  \begin{rmk}
  This result can be partially extended   to   $p= 2$,   \cite[Theorem 1.1]{LM}.
  \end{rmk}

  \begin{rmk}
These results can be extended to  cyclotomic Hecke algebras,  \cite{FS1}.
  \end{rmk}

\end{document}